\documentclass[12pt]{amsart} 
\usepackage{lmodern}
\usepackage[utf8]{inputenc}
\usepackage{latexcolors}  
\usepackage{amsmath, amsthm, amssymb, hyperref}
\usepackage[margin=1in]{geometry}
\usepackage{enumerate}
\usepackage[shortlabels]{enumitem}
\usepackage{mathrsfs}
\usepackage{mathtools}

\usepackage{pifont}
\usepackage{multicol}
\usepackage{tikz-cd}
\usetikzlibrary{backgrounds}
\usetikzlibrary{decorations.pathmorphing}
\usepackage{graphicx, float} 
\usepackage[capitalise, noabbrev]{cleveref}
\usepackage{venndiagram}
\usepackage{tikz-cd}
\usepackage[normalem]{ulem}
\usepackage{verbatim}
\usepackage{caption}
\usepackage{subcaption}

\usetikzlibrary{shapes.geometric, calc}
\usetikzlibrary{3d,arrows.meta,positioning}
\hypersetup{colorlinks, linkcolor=red, citecolor=red}

\newcommand{\btimes}{\mathbin{\rotatebox[origin=c]{90}{$\ltimes$}}}
\newcommand{\N}{\mathbb{N}}

\newcommand{\Q}{\mathbb{Q}}
\newcommand{\R}{\mathbb{R}}
\newcommand{\K}{\mathbb{K}}

\renewcommand{\P}{\mathcal{P}}
\newcommand{\st}{\colon}
\newcommand{\Z}{\mathbb{Z}}
\newcommand{\PP}{\mathbb{P}}

\newcommand{\F}{\mathcal{F}}
\newcommand{\M}{\mathcal{M}}
\newcommand{\MS}{\mathcal{MS}}
\newcommand{\I}{\mathcal{I}}

\newcommand{\grobner}{Gröbner }
\newcommand{\mobius}{Möbius }
\newcommand{\ba}{\mathbf{a}}
\newcommand{\be}{\mathbf{e}}
\newcommand{\by}{\mathbf{y}}
\newcommand{\bb}{\mathbf{b}}
\newcommand{\bc}{\mathbf{c}}
\newcommand{\bx}{\mathbf{x}}
\newcommand{\bw}{\mathbf{w}}
\newcommand{\bv}{\mathbf{v}}

\newcommand{\bp}{\mathbf{p}}
\newcommand{\bz}{\mathbf{z}}
\newcommand{\bm}{\mathbf{m}}
\newcommand{\bq}{\mathbf{q}}

\renewcommand{\tilde}{}

\newcommand{\tuple}[1]{\langle#1 \rangle}

\newcommand{\qand}{\quad \mbox{and} \quad}

\newcommand{\qfor}{\quad \mbox{for} \quad}

\newcommand{\qforall}{\quad \mbox{for all} \quad}

\newcommand{\qotherwise}{\quad \mbox{otherwise}}
\newcommand{\B}{\mathcal{B}}
\newcommand{\init}{\mathrm{in}}
\newcommand{\rev}{\mathrm{rev}}

\DeclareMathOperator{\diag}{diag}

\newtheorem{theorem}{Theorem}[section]

\newtheorem{proposition}[theorem]{Proposition}
\newtheorem{corollary}[theorem]{Corollary}
\newtheorem{lemma}[theorem]{Lemma}
\theoremstyle{definition}
\newtheorem{definition}[theorem]{Definition}
\newtheorem{example}[theorem]{Example}
\newtheorem{remark}[theorem]{Remark}
\newtheorem{question}[theorem]{Question}

\newtheorem*{Acknowledgments}{Acknowledgments}

\DeclareMathOperator{\aff}{aff}
\DeclareMathOperator{\initial}{in}
\DeclareMathOperator{\rank}{rank}
\DeclareMathOperator{\conv}{conv}

\DeclareMathOperator{\im}{im}
\DeclareMathOperator{\free}{free}
\DeclareMathOperator{\supp}{supp}
\DeclareMathOperator{\sign}{sign}
\DeclareMathOperator{\partition}{part}
\DeclareMathOperator{\HF}{HF}
\DeclareMathOperator{\HS}{HS}

\usepackage{todonotes}
\reversemarginpar

\title{Symmetric (co)homology polytopes}

\date{\today}
\subjclass[2020]{52B12; 52B20; 05E45}

\begin{document}

\author[T. Donzelmann]{Torben Donzelmann}
\address[T. Donzelmann]
{Universität Osnabrück, Albrechtstra\ss e 28a, 49076 Osnabr\"uck, Germany}
\email{torben.donzelmann@uos.de}

\author[T. Holleben]{Thiago Holleben}
\address[T. Holleben]
{Department of Mathematics \& Statistics,
Dalhousie University,
6297 Castine Way,
PO BOX 15000,
Halifax, NS,
Canada B3H 4R2}
\email{hollebenthiago@dal.ca}

\author[M.~Juhnke]{Martina Juhnke}
\address[M.~Juhnke]
{Universität Osnabrück, Albrechtstra\ss e 28a, 49076 Osnabr\"uck, Germany}
\email{martina.juhnke@uni-osnabrueck.de}

\begin{abstract}
Symmetric edge polytopes are a recent and well-studied family of centrally symmetric polytopes arising from graphs.
In this paper, we introduce a generalization of this family to arbitrary simplicial complexes. 
We show how topological properties of a simplicial complex can be translated into geometric properties of such polytopes,
and vice versa.  
We study the integer decomposition property, facets and reflexivity of these polytopes. 
Using  Gröbner basis techniques, we  obtain a (not necessarily unimodular) triangulation of these polytopes.
Due to the tools we use, most of our results hold in the more general setting of arbitrary centrally symmetric polytopes.
\end{abstract}

\maketitle

\section{Introduction}
When modeling a combinatorial object based on an underlying graph $G$, it is often the case that the incidence matrix of $G$ is a valuable tool for understanding the corresponding object. A prime example of this is the sandpile group~\cite[Chapter 4]{K2019} of a graph $G$, where the size of the group can be computed via the incidence matrix using the well-known Matrix Tree theorem~\cite{B1974}.

From a topological point of view, interpreting a graph $G$ as a $1$-dimensional simplicial complex, the incidence matrix of $G$ is just its first boundary map in homology. Having this connection in mind, in 2010 Duval, Klivans and Martin~\cite{DKM2009} introduced the (co)critical groups of an arbitrary simplicial complex $\Delta$, with the goal of generalizing the Matrix Tree theorem to higher dimensional simplicial complexes.

From a different perspective, another object defined in terms of the incidence matrix of a graph $G$ that has received a lot of attention recently is the so-called \emph{symmetric edge polytope} $\P_G$~ (see e.g., \cite{CD,CRV2025,DDM2022,HJM2019,KT2023,HT-Matching}) of $G$, defined as
\[
\P_G=\conv\left(\pm\mathbf{v}~:~ \mathbf{v} \text{ is a column of }\partial_1\right),
\]
where $\partial_1$ denotes the incidence matrix of $G$. Symmetric edge polytopes are of importance in various areas of Mathematics, as e.g., in the study of finite metric spaces and computational phylogenetics, and also in Physics through their connection to the Kuramoto model for interacting oscillators (see \cite{DDM2022} for more details). 
Many combinatorial properties of symmetric edge polytopes have been shown to not depend on the underlying graph explicitly but just on the graphic matroid it represents. Motivated by this, symmetric edge polytopes have been generalized to regular matroids in \cite{DJK2024} (see also \cite{DHO2024} for more results on these). 

In this paper, we aim at a more topological generalization of symmetric edge polytopes. 
 Namely, interpreting a graph and its incidence matrix as a $1$-dimensional simplicial complex and its first boundary map in homology, respectively, (see also~\cite{DKM2009}), generalize symmetric edge polytopes to arbitrary simplicial complexes as follows: Given a $d$-dimensional simplicial complex $\Delta$ with top boundary map (in homology) $\partial_d$, we call
 \begin{equation}\label{eq:SHP}
\P_\Delta\coloneqq\conv[\partial_d|-\partial_d] \qquad \text{ and } \P^\Delta\coloneqq \conv[\partial^\top_d|-\partial^\top_d] 
\end{equation}
  the \emph{symmetric homology polytope} and the \emph{symmetric cohomology polytope}  of $\Delta$, respectively, where, for a matrix $A$,  the notation $\conv[A|-A]$ means that we take the convex hull over the columns of $A$ and $-A$.
  The goal of this paper is to initiate the study of these two new classes of polytopes and to motivate further research in them. We will see that sometimes they behave very similar to their classical counterparts, but sometimes they also show a different behavior.

  More precisely, after having computed basic properties, as the dimension and their number of vertices, our first main result is the precise description of the faces of a symmetric homology polytope. From this, we, in particular, derive a characterization of their facets via labelings of the facets of $\Delta$ and spanning forests (see \Cref{t:facets}). This result generalizes the facet description of the symmetric edge polytope of a graph in a very natural way (see \cite[Theorem 3.1]{HJM2019}). Moreover, \Cref{t:facets} allows us to compute the number of facets of $\P_\Delta$ explicitly if the underlying simplicial complex is a connected closed orientable pseudomanifold (see \Cref{c:cycle_facets}). In the $1$-dimensional situation, the obtained formula specializes to the known formula of the number of facets of the symmetric edge polytope of a cycle and hence provides another parallel to symmetric edge polytopes of graphs (cf., \cite[Proposition 4.3]{DDM2022}). 

Interestingly, another similarity, which is a dissimilarity at the same time, is given by the Gr\"obner basis of the toric ideal of these polytopes: In \Cref{t:groebner_homology}, we manage to compute a Gr\"obner basis for the toric ideals of \emph{any} centrally symmetric lattice polytope. On the one hand, compared to the Gr\"obner basis from \cite[Proposition 3.8]{HJM2019}, our Gr\"obner basis only requires one completely new type of binomials and thus is a nice generalization. On the other hand, while toric ideals of symmetric edge polytopes of graphs admit a squarefree Gr\"obner basis, this is no longer true for symmetric (co)homology polytopes (see \Cref{t:noSquarefreeGroebner}). Consequently, symmetric (co)homology polytopes do not necessarily have the integer decomposition property anymore and we give a necessary  criterion for this property to hold in~\Cref{p:notidp}. In particular, we show that torsion in the $(d-1)$\textsuperscript{st} homology group of a $d$-dimensional simplicial complex is an obstruction for both types of polytopes. This is one example where we are able to translate topological properties of $\Delta$ into geometric properties of the polytopes $\P_\Delta$ and $\P^\Delta$. 
  
The main source for the different behaviors of symmetric (co)homology polytopes compared to their more classical counterparts for graphs lies in the different natures of the boundary matrices. Indeed, while incidence matrices of graphs are always totally unimodular, this is not true for arbitrary boundary matrices. For symmetric edge polytopes, totally unimodularity was crucial in the proof of  various properties of these polytopes, e.g., reflexivity (see ~\cite[Proposition 3.2]{MHN2011})). 
Once we start considering boundary maps of higher dimensional simplicial complexes, the situation quickly becomes more complicated. This was also the case in ~\cite{DKM2009}.

We deal with this obstruction arising from non-totally unimodular matrices in two different ways: 

Using the Smith normal form of boundary maps, we investigate when general centrally symmetric polytopes are reflexive. Our main result is a complete characterization of reflexivity for centrally symmetric crosspolytopes (see \Cref{t:reflexiveCriterionCrossPolytope}). This translates into a particular nice characterization of reflexivity in the setting of symmetric homology polytopes (see \Cref{t:reflexivitySymHomPol}), since in this setting the Smith normal form can be interpreted topologically \cite[Theorem 11.4, Theorem 11.5]{M1984}. In particular, similar as for the integer decomposition property, higher torsion in $(d-1)$\textsuperscript{st} homology group is an obstruction to reflexivity. 
These results also resemble the results regarding critical simplicial groups in~\cite{DKM2009,DKM2011,DKM2013,DKM2015}.

Another aspect of our approach is to focus on classes of simplicial complexes whose boundary maps are known to be totally unimodular. 
In this direction, we are able to show that  both, the homology and cohomology polytopes, of orientable pseudomanifolds are unimodularly equivalent to symmetric edge polytopes of special graphs (see~\cref{t:orientablepseudomanifolds,t:orientablepseudomanifoldswithboundary}). 
Using the classification of totally unimodular boundary maps from  \cite{DHK2011}, we show that symmetric cohomology polytopes  can be used to detect combinatorial equivalence of shellable simplicial spheres (see~\cref{c:distinguishspheres}). 

The paper is structured as follows. In~\cref{s:prelim} we introduce the necessary notions and notation that are used throughout the paper, including the definition of the symmetric (co)homology polytope of a simplicial complex. In \Cref{s:basic}, we study basic properties and provide a face description. In~\cref{s:grobner} we a provide a \grobner basis for the toric ideal of symmetric (co)homology polytopes and study the existence of regular unimodular triangulations for these polytopes. In~\cref{s:IDP} we explore the integer decomposition property (IDP, for short) for these polytopes, and give a topological characterization of when these polytopes are combinatorially equivalent to a crosspolytope. In~\cref{s:reflexivity} we provide criteria for understanding when are symmetric homology polytopes reflexive in terms of torsion generators in homology. In~\cref{s:cohomology} we focus on studying symmetric cohomology polytopes for  simplicial complexes of small dimension. In~\cref{s:examples} we explore symmetric (co)homology polytopes of triangulations of manifolds. Finally, in~\cref{s:questions} we gather some examples that show some of the interesting behavior of symmetric homology and cohomology polytopes,  and state questions that follow from our work.

\section{Preliminaries}\label{s:prelim}
\subsection{Simplicial complexes}

A \emph{simplicial complex} $\Delta$ on vertex set $V$ is a collection of subsets of $V$, called \emph{faces}, such that $\sigma \in \Delta$ and $\tau \subset \sigma$ implies $\tau \in \Delta$. Maximal faces of $\Delta$ are called \emph{facets}, and we write $\Delta = \tuple{\sigma_1, \dots, \sigma_s}$ if $\sigma_1, \dots, \sigma_s$ are the facets of $\Delta$. 
The \emph{dimension} of a face $\sigma \in \Delta$ is $\dim \sigma = |\sigma| - 1$, and the \emph{dimension} of $\Delta$ is $\dim \Delta = \max (\dim \sigma \st \sigma \in \Delta)$.  A $1$-dimensional simplicial complex is called a \emph{graph}. Given a $d$-dimensional simplicial complex $\Delta$, the vector $f(\Delta) = (f_0(\Delta), \dots, f_d(\Delta))$, where $f_i$ is the number of $i$-dimensional faces of $\Delta$ is called the \emph{$f$-vector of $\Delta$}.

If every facet of $\Delta$ has the same dimension, $\Delta$ is called \emph{pure}. The $0$-dimensional faces of $\Delta$ are called \emph{vertices}, the $1$-dimensional faces are called \emph{edges}, $j$-dimensional faces will also be called  \emph{$j$-faces} and for a $d$-dimensional pure simplicial  complex, the $(d-1)$-dimensional faces are called the \emph{ridges} of $\Delta$. A ridge that is contained in exactly one facet of $\Delta$ is called \emph{free}. 
The \emph{facet-ridge graph} $G(\Delta)$ of $\Delta$ is the graph whose vertices are the facets of $\Delta$ and whose edges correspond to pairs of facets of $\Delta$ that intersect in a ridge.
Ordering the faces of a simplicial complex by inclusion gives rise to a poset, the so-called \emph{face poset} in a natural way. 
Simplicial complexes $\Delta$ and $\Gamma$ are \emph{combinatorially equivalent} if there exists a bijection between their vertex sets that induces rise to a bijection between their face posets.

A simplicial complex $\Delta$ is called a \emph{pseudomanifold} if it satisfies the following properties:

\begin{enumerate}
    \item $\Delta$ is pure;
    \item every ridge of $\Delta$ is contained in at most two facets;
    \item $\Delta$ is \emph{strongly connected}, i.e., for every pair of facets $\sigma, \sigma'$ of $\Delta$, there exists a sequence of facets $\sigma_1, \dots, \sigma_s$ such that $\sigma_1 = \sigma$, $\sigma_s = \sigma'$ and $\sigma_i \cap \sigma_{i + 1}$ is a ridge of $\Delta$ for every $i$.
\end{enumerate}
Note that, by definition, a pseudomanifold $\Delta$ is connected. 
The \emph{boundary} of a pseudomanifold $\Delta$ is the simplicial complex $\partial \Delta = \tuple{\sigma_1, \dots, \sigma_t}$, where $\{\sigma_1, \dots, \sigma_t\}$ is the set of ridges of $\Delta$ contained in exactly one facet of $\Delta$. If $\partial \Delta = \emptyset$, $\Delta$ is called a \emph{closed pseudomanifold} or a \emph{pseudomanifold without boundary}. Note that for a closed pseudomanifold $\Delta$, the edges of the facet-ridge graph are in $1$-$1$--correspondence to the ridges of $\Delta$.

\begin{example}[\emph{A triangulation of the real projective plane}]\label{ex:projectiveplane}
    The $2$-dimensional real \emph{projective space/plane} 
    $\R\PP^2$ is an important space in algebraic topology, as it serves as the simplest example satisfying several interesting topological properties. Throughout this paper, we will often use the following triangulation $\Delta_{\R\PP^2}$ of $\R\PP^2$:  
    $$
        \Delta_{\R\PP^2} = \tuple{125, 126,134,136,145,234,235,246,356,456}.
    $$
    where we use the short notation $125$ for $\{1,2,5\}$, and similarly for other sets. 
    One can easily check that $\Delta_{\R\PP^2}$ is a pseudomanifold without boundary. \Cref{fig:projectivePlane} shows the triangulation $\Delta_{\R\PP^2}$ together with its facet-ridge graph $G(\Delta_{\R\PP^2})$. 
\begin{figure}
    \centering
    \begin{center}
\tikzset{every picture/.style={line width=0.75pt}} 

\begin{tikzpicture}[x=0.5pt,y=0.5pt,yscale=-1,xscale=1]

\draw  [fill={rgb, 255:red, 155; green, 155; blue, 155 }  ,fill opacity=1 ] (191.33,318.48) -- (70.12,249.62) -- (68.04,110.89) -- (187.17,41.02) -- (308.38,109.88) -- (310.46,248.61) -- cycle ;
\draw    (191,225.5) -- (191.33,318.48) ;
\draw    (191,225.5) -- (70.12,249.62) ;
\draw    (191,225.5) -- (310.46,248.61) ;
\draw    (237,144.5) -- (191,225.5) ;
\draw    (237,144.5) -- (308.38,109.88) ;
\draw    (187.17,41.02) -- (237,144.5) ;
\draw    (145,148.5) -- (187.17,41.02) ;
\draw    (68.04,110.89) -- (145,148.5) ;
\draw    (145,148.5) -- (237,144.5) ;
\draw    (70.12,249.62) -- (145,148.5) ;
\draw    (191,225.5) -- (145,148.5) ;
\draw    (237,144.5) -- (310.46,248.61) ;
\draw    (551,289.5) -- (473,289.5) ;
\draw [shift={(473,289.5)}, rotate = 180] [color={rgb, 255:red, 0; green, 0; blue, 0 }  ][fill={rgb, 255:red, 0; green, 0; blue, 0 }  ][line width=0.75]      (0, 0) circle [x radius= 3.35, y radius= 3.35]   ;
\draw [shift={(551,289.5)}, rotate = 180] [color={rgb, 255:red, 0; green, 0; blue, 0 }  ][fill={rgb, 255:red, 0; green, 0; blue, 0 }  ][line width=0.75]      (0, 0) circle [x radius= 3.35, y radius= 3.35]   ;
\draw    (587,231.5) -- (551,289.5) ;
\draw [shift={(551,289.5)}, rotate = 121.83] [color={rgb, 255:red, 0; green, 0; blue, 0 }  ][fill={rgb, 255:red, 0; green, 0; blue, 0 }  ][line width=0.75]      (0, 0) circle [x radius= 3.35, y radius= 3.35]   ;
\draw [shift={(587,231.5)}, rotate = 121.83] [color={rgb, 255:red, 0; green, 0; blue, 0 }  ][fill={rgb, 255:red, 0; green, 0; blue, 0 }  ][line width=0.75]      (0, 0) circle [x radius= 3.35, y radius= 3.35]   ;
\draw    (520.25,204.75) -- (587,231.5) ;
\draw [shift={(587,231.5)}, rotate = 21.84] [color={rgb, 255:red, 0; green, 0; blue, 0 }  ][fill={rgb, 255:red, 0; green, 0; blue, 0 }  ][line width=0.75]      (0, 0) circle [x radius= 3.35, y radius= 3.35]   ;
\draw [shift={(520.25,204.75)}, rotate = 21.84] [color={rgb, 255:red, 0; green, 0; blue, 0 }  ][fill={rgb, 255:red, 0; green, 0; blue, 0 }  ][line width=0.75]      (0, 0) circle [x radius= 3.35, y radius= 3.35]   ;
\draw    (520.25,204.75) -- (460,242.5) ;
\draw [shift={(460,242.5)}, rotate = 147.93] [color={rgb, 255:red, 0; green, 0; blue, 0 }  ][fill={rgb, 255:red, 0; green, 0; blue, 0 }  ][line width=0.75]      (0, 0) circle [x radius= 3.35, y radius= 3.35]   ;
\draw [shift={(520.25,204.75)}, rotate = 147.93] [color={rgb, 255:red, 0; green, 0; blue, 0 }  ][fill={rgb, 255:red, 0; green, 0; blue, 0 }  ][line width=0.75]      (0, 0) circle [x radius= 3.35, y radius= 3.35]   ;
\draw    (473,289.5) -- (460,242.5) ;
\draw [shift={(460,242.5)}, rotate = 254.54] [color={rgb, 255:red, 0; green, 0; blue, 0 }  ][fill={rgb, 255:red, 0; green, 0; blue, 0 }  ][line width=0.75]      (0, 0) circle [x radius= 3.35, y radius= 3.35]   ;
\draw [shift={(473,289.5)}, rotate = 254.54] [color={rgb, 255:red, 0; green, 0; blue, 0 }  ][fill={rgb, 255:red, 0; green, 0; blue, 0 }  ][line width=0.75]      (0, 0) circle [x radius= 3.35, y radius= 3.35]   ;
\draw    (616,198.5) -- (587,231.5) ;
\draw [shift={(587,231.5)}, rotate = 131.31] [color={rgb, 255:red, 0; green, 0; blue, 0 }  ][fill={rgb, 255:red, 0; green, 0; blue, 0 }  ][line width=0.75]      (0, 0) circle [x radius= 3.35, y radius= 3.35]   ;
\draw [shift={(616,198.5)}, rotate = 131.31] [color={rgb, 255:red, 0; green, 0; blue, 0 }  ][fill={rgb, 255:red, 0; green, 0; blue, 0 }  ][line width=0.75]      (0, 0) circle [x radius= 3.35, y radius= 3.35]   ;
\draw    (526,138.5) -- (520.25,204.75) ;
\draw [shift={(520.25,204.75)}, rotate = 94.96] [color={rgb, 255:red, 0; green, 0; blue, 0 }  ][fill={rgb, 255:red, 0; green, 0; blue, 0 }  ][line width=0.75]      (0, 0) circle [x radius= 3.35, y radius= 3.35]   ;
\draw [shift={(526,138.5)}, rotate = 94.96] [color={rgb, 255:red, 0; green, 0; blue, 0 }  ][fill={rgb, 255:red, 0; green, 0; blue, 0 }  ][line width=0.75]      (0, 0) circle [x radius= 3.35, y radius= 3.35]   ;
\draw    (616,198.5) -- (582,128.5) ;
\draw [shift={(582,128.5)}, rotate = 244.09] [color={rgb, 255:red, 0; green, 0; blue, 0 }  ][fill={rgb, 255:red, 0; green, 0; blue, 0 }  ][line width=0.75]      (0, 0) circle [x radius= 3.35, y radius= 3.35]   ;
\draw [shift={(616,198.5)}, rotate = 244.09] [color={rgb, 255:red, 0; green, 0; blue, 0 }  ][fill={rgb, 255:red, 0; green, 0; blue, 0 }  ][line width=0.75]      (0, 0) circle [x radius= 3.35, y radius= 3.35]   ;
\draw    (582,128.5) -- (526,138.5) ;
\draw [shift={(526,138.5)}, rotate = 169.88] [color={rgb, 255:red, 0; green, 0; blue, 0 }  ][fill={rgb, 255:red, 0; green, 0; blue, 0 }  ][line width=0.75]      (0, 0) circle [x radius= 3.35, y radius= 3.35]   ;
\draw [shift={(582,128.5)}, rotate = 169.88] [color={rgb, 255:red, 0; green, 0; blue, 0 }  ][fill={rgb, 255:red, 0; green, 0; blue, 0 }  ][line width=0.75]      (0, 0) circle [x radius= 3.35, y radius= 3.35]   ;
\draw    (526,138.5) -- (460,134.5) ;
\draw [shift={(460,134.5)}, rotate = 183.47] [color={rgb, 255:red, 0; green, 0; blue, 0 }  ][fill={rgb, 255:red, 0; green, 0; blue, 0 }  ][line width=0.75]      (0, 0) circle [x radius= 3.35, y radius= 3.35]   ;
\draw [shift={(526,138.5)}, rotate = 183.47] [color={rgb, 255:red, 0; green, 0; blue, 0 }  ][fill={rgb, 255:red, 0; green, 0; blue, 0 }  ][line width=0.75]      (0, 0) circle [x radius= 3.35, y radius= 3.35]   ;
\draw    (460,134.5) -- (424,192.5) ;
\draw [shift={(424,192.5)}, rotate = 121.83] [color={rgb, 255:red, 0; green, 0; blue, 0 }  ][fill={rgb, 255:red, 0; green, 0; blue, 0 }  ][line width=0.75]      (0, 0) circle [x radius= 3.35, y radius= 3.35]   ;
\draw [shift={(460,134.5)}, rotate = 121.83] [color={rgb, 255:red, 0; green, 0; blue, 0 }  ][fill={rgb, 255:red, 0; green, 0; blue, 0 }  ][line width=0.75]      (0, 0) circle [x radius= 3.35, y radius= 3.35]   ;
\draw    (460,242.5) -- (424,192.5) ;
\draw [shift={(424,192.5)}, rotate = 234.25] [color={rgb, 255:red, 0; green, 0; blue, 0 }  ][fill={rgb, 255:red, 0; green, 0; blue, 0 }  ][line width=0.75]      (0, 0) circle [x radius= 3.35, y radius= 3.35]   ;
\draw [shift={(460,242.5)}, rotate = 234.25] [color={rgb, 255:red, 0; green, 0; blue, 0 }  ][fill={rgb, 255:red, 0; green, 0; blue, 0 }  ][line width=0.75]      (0, 0) circle [x radius= 3.35, y radius= 3.35]   ;
\draw    (460,134.5) .. controls (278,291.5) and (507,369.5) .. (551,289.5) ;
\draw    (424,192.5) .. controls (414,21.5) and (678,51.5) .. (616,198.5) ;
\draw    (582,128.5) .. controls (755,171.5) and (586,394.5) .. (473,289.5) ;

\draw (182,20.4) node [anchor=north west][inner sep=0.75pt]    {$1$};
\draw (317,94.4) node [anchor=north west][inner sep=0.75pt]    {$6$};
\draw (321,242.4) node [anchor=north west][inner sep=0.75pt]    {$4$};
\draw (237,117.4) node [anchor=north west][inner sep=0.75pt]    {$2$};
\draw (137,118.4) node [anchor=north west][inner sep=0.75pt]    {$5$};
\draw (199,233.4) node [anchor=north west][inner sep=0.75pt]    {$3$};
\draw (41,92.4) node [anchor=north west][inner sep=0.75pt]    {$4$};
\draw (44,238.4) node [anchor=north west][inner sep=0.75pt]    {$6$};
\draw (183,323.4) node [anchor=north west][inner sep=0.75pt]    {$1$};
\draw (425,284.4) node [anchor=north west][inner sep=0.75pt]    {$136$};
\draw (562,275.4) node [anchor=north west][inner sep=0.75pt]    {$134$};
\draw (590,232.4) node [anchor=north west][inner sep=0.75pt]    {$234$};
\draw (415,237.4) node [anchor=north west][inner sep=0.75pt]    {$356$};
\draw (500,217.4) node [anchor=north west][inner sep=0.75pt]    {$235$};
\draw (434,181.4) node [anchor=north west][inner sep=0.75pt]    {$456$};
\draw (453,106.4) node [anchor=north west][inner sep=0.75pt]    {$145$};
\draw (508,110.4) node [anchor=north west][inner sep=0.75pt]    {$125$};
\draw (558,100.4) node [anchor=north west][inner sep=0.75pt]    {$126$};
\draw (610,205.4) node [anchor=north west][inner sep=0.75pt]    {$246$};

\end{tikzpicture}

\end{center}
    \caption{The triangulation $\Delta_{\R\PP^2}$ of the real projective plane (\emph{left}) and its facet-ridge graph $G(\Delta_{\R\PP^2})$ (\emph{right}).}
    \label{fig:projectivePlane}
\end{figure}
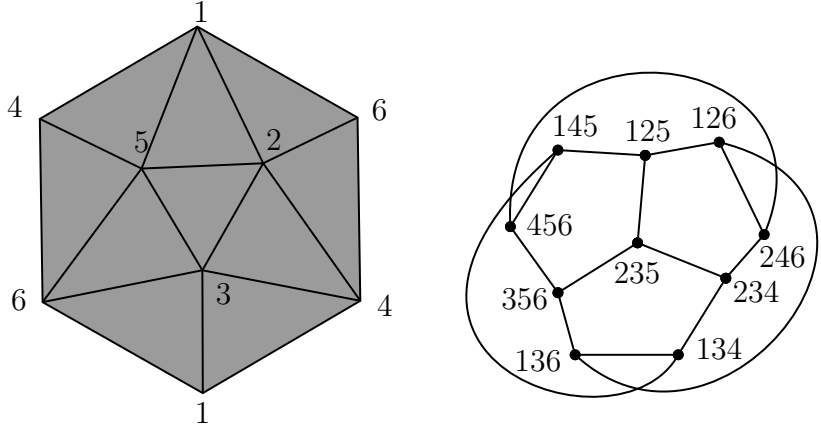
\end{example}

\subsection{Simplicial (co)homology}
We now recall basic facts on simplicial (co)homology, as it will play a very important role in the next sections. We refer the reader to~\cite{M1984} for more details.

Let $\Delta$ be a simplicial complex on vertex set $[n]\coloneqq \{1,\ldots,n\}$, $\K$ be a field (or $\Z$) and $C_j(\Delta;\K)$ be an $f_j(\Delta)$-dimensional $\K$-vector space (or a free abelian group) with basis $\{\sigma \st \sigma \in \Delta \mbox{ and } \dim \sigma = j\}$. 
The $\K$-linear map $\partial_j: C_j(\Delta;\K) \to C_{j-1}(\Delta;\K)$ defined on a $j$-face $\sigma = \{i_1, \dots, i_{j+1}\}$ with $  i_1 < \dots < i_{j+1}$ by  
\[
    \partial_j(\sigma) = \sum_{k = 1}^{j+1} (-1)^{k+1} (\sigma \setminus \{i_k\})
\]
is called the \emph{$j$\textsuperscript{th} boundary map} (in homology) of $\Delta$. 
Note that the matrix that represents the boundary map $\partial_j$ is a $\{0,1,-1\}$-matrix whose columns have $j+1$ nonzero entries, that alternate in sign. Abusing notation slightly, we will use $\partial_j$ for both, the linear map and the corresponding matrix (with respect to the standard basis).

\begin{remark}\label{r:incidenceboundary}
    Given a graph $G$ on vertex set $[n]$, the boundary map $\partial_1$ is often called the \emph{incidence matrix} of $G$.
\end{remark}    

The maps $\partial_j$ satisfy the equality $\partial_j \circ\partial_{j+1} = 0$ for every $j$. In particular, we have the inclusion $\im \partial_{j + 1} \subset \ker \partial_{j}$. The \emph{$j$\textsuperscript{th} homology group of $\Delta$} with coefficients in $\K$ is defined as the quotient
\[
    \tilde H_j(\Delta; \K) \coloneqq \frac{\ker_\K \partial_j}{\im_\K \partial_{j + 1}},
\]
where the subscript $\K$ indicates that we take the elements of the kernel/image whose entries belong to $\K$. We will use this type of notation also later on.
The fundamental theorem of finitely generated abelian groups~\cite[Theorem 3, Section 5.2]{DummitFoote} says that for every $j$, we have an isomorphism $\tilde H_j(\Delta;\Z) \cong F \oplus T$, where $F$ is a free abelian group, and $T$ is a finite abelian group.
If $T$ has a subgroup isomorphic to $\Z_p\coloneqq \Z/p\Z$ (where $p$ is a positive integer), we say that $\tilde H_j(\Delta;\Z)$ has \emph{$p$-torsion}. 
If $\Delta$ is such that $H_j(\Delta;\Z)$ does not have $p$-torsion for any $p$, then $\Delta$ and $H_j(\Delta;\Z)$, respectively, are called \emph{torsion-free}.
Note that, if $\dim \Delta=d$, then $\im \partial_{d + 1} = 0$ and hence $\tilde H_d(\Delta;\Z) = \ker \partial_d$ is a subgroup of $C_d(\Delta;\Z)$ and as such it has to be free.
Similarly, the \emph{$j$\textsuperscript{th} cohomology group of $\Delta$} with coefficients in $\K$ is defined as the quotient
\[
    \tilde H^j (\Delta; \K) \coloneqq \frac{\ker_\K \partial_j^\top}{\im_\K \partial_{j - 1}^\top}.
\]
Elements in $C_j(\Delta;\K)$, $\ker_\K \partial_j$ and $\im_\K \partial_j$ are called \emph{$j$-chains}, \emph{$j$-cycles} and \emph{$j$-boundaries}, respectively, so that nonzero elements in $\tilde H_j(\Delta;\K)$ come from cycles that are not boundaries. 
When we replace $\partial_j$ by $\partial_j^\top$ we just add the prefix \emph{co} to the names just mentioned. In particular, in the setting of cohomology, we call elements in $C_j(\Delta;\K)$ \emph{$j$-cochains}.

\begin{example}[\emph{From torsion to homology cycles: Björner's example}]\label{ex:bjorner}
    Consider the  triangulation $\Delta_{\R\PP^2}$ of $\R\PP^2$  from~\cref{ex:projectiveplane}. Using Sage~\cite{sagemath} we compute 
    $$
        H_0(\Delta_{\R\PP^2}; \Z) = \Z, \quad H_1(\Delta_{\R\PP^2}; \Z) = \Z_2 \qand H_2(\Delta_{\R\PP^2}; \Z) = 0,
    $$
    where the torsion generator of $H_1(\Delta; \Z)$ is given by 
\begin{equation}\label{eq:torsionGen}
        - [125] - [126] + [134] + [136] + [145] - [234] - [235] - [246] + [356] - [456].
\end{equation}
Here, the notation $[ijk]$ (where $1\leq i<j<k\leq 6$) is used to refer to the basis element in $C_2(\Delta_{\R\PP^2};\K)$ corresponding to the face $ijk$. 

    Let $\B$ be  the simplicial complex $\B$ that is the union of  $\Delta_{\R\PP^2}$ with the additional facet $123$.  This simplicial complex is also often referred to as \emph{Björner's example}, appearing in Hachimori's library of simplicial complexes~\cite{Hlib}. Computation shows that 
    $$
        H_0(\B; \Z) = \Z, \quad H_1(\B; \Z) = 0 \qand H_2(\B; \Z) = \Z.
    $$
    We want to emphasize that adding the facet $123$ has destroyed the torsion on one side, and created non-trivial $2$-dimensional homology, on the other side. 
    The generator for $H_2(\B; \Z)$ is given by 
    $$
        2[123] - [125] - [126] + [134] + [136] + [145] - [234] - [235] - [246] + [356] - [456].
    $$
    To the best of our knowledge,  $\B$ is the smallest simplicial complex  (in terms of the number of vertices) having a homology cycle over $\Z$ with coprime coefficients such that there exists a coefficient different from $-1$, $0$ and $1$. This property will be important in later sections.
\end{example}

Given a simplicial complex $\Delta$ and a subcomplex $\Gamma$, the \emph{$j$\textsuperscript{th} relative homology group} of the pair $(\Delta, \Gamma)$ with coefficients in $\K$ is defined as the quotient 
\[
    \tilde H_j(\Delta, \Gamma; \K) = \frac{\ker_\K \partial_j^\ast}{\im_\K \partial_{j+1}^\ast},
\]
where $\partial_j^\ast: \frac{C_j(\Delta)}{C_j(\Gamma)} \to \frac{C_{j-1}(\Delta)}{C_{j-1}(\Gamma)}$ is the map induced by taking the quotient. (Note that the boundary map of $\Gamma$ is just the restriction of the boundary map of $\Delta$.) 

A $d$-dimensional closed pseudomanifold $\Delta$ is called \emph{orientable} if $H_d(\Delta; \Z) =\Z$. Otherwise,  $\Delta$ is called \emph{non-orientable}. Similarly, a $d$-dimensional pseudomanifold $\Delta$ with boundary $\partial \Delta$ is called \emph{orientable} if $H_d(\Delta,\partial\Delta;\Z)=\Z$.

Finally, a matrix $M$ is said to be \emph{totally unimodular} if every minor of $M$ is either $-1, 0$ or $1$. 
Totally unimodular matrices turn out to play an important role in the study of linear programs and polytopes.
The question of when a boundary matrix of a simplicial complex $\Delta$ is totally unimodular was answered in~\cite{DHK2011}. 
We now state a special case of this result, that will be useful for our purposes.

\begin{theorem}{\cite[Theorem 4.1]{DHK2011}}\label{t:tuboundary}
    Let  $\Delta$ be a $d$-dimensional  orientable pseudomanifold. Then $\partial_d$ is totally unimodular.
\end{theorem}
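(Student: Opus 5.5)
The strategy is to exhibit $\partial_d$, after harmless modifications that preserve total unimodularity, as a submatrix of a network matrix, i.e.\ of the incidence matrix of a directed graph. The graph will be the facet-ridge graph $G(\Delta)$, with possibly one extra vertex. We work with the transpose $\partial_d^\top$, which has the same minors up to sign. Its rows are indexed by the facets of $\Delta$ and its columns by the ridges. Since every ridge lies in at most two facets, each column of $\partial_d^\top$ has at most two nonzero entries, each equal to $\pm 1$. A classical criterion then applies: a $\{0,\pm1\}$-matrix with at most two nonzero entries per column is totally unimodular provided its rows can be multiplied by signs $\varepsilon_\sigma\in\{\pm1\}$ so that every column with two nonzero entries has one $+1$ and one $-1$. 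In that case the matrix is the incidence matrix of a directed graph, possibly with half-edges, and a standard induction on the size of a square submatrix gives all minors in $\{0,\pm1\}$. That induction expands along a column with at most one nonzero entry, or otherwise notes that the rows sum to zero. Row sign changes alter minors only by a sign, so it suffices to produce such signs.

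The key step is therefore to find the signs $\varepsilon_\sigma$ from orientability. Suppose first that $\Delta$ is closed. Then $H_d(\Delta;\Z)=\ker \partial_d\cong\Z$, and we pick a generator $z=\sum_\sigma c_\sigma \sigma$. Each ridge $\tau$ lies in exactly two facets $\sigma,\sigma'$, so the coefficient of $\tau$ in $\partial_d z$ gives $c_\sigma[\sigma:\tau]+c_{\sigma'}[\sigma':\tau]=0$, where $[\sigma:\tau]=\pm1$ is the incidence sign. Hence $|c_\sigma|=|c_{\sigma'}|$ whenever $\sigma$ and $\sigma'$ share a ridge. Strong connectivity then makes all $|c_\sigma|$ equal, and since $z$ generates, all $c_\sigma=\pm1$. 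We set $\varepsilon_\sigma=c_\sigma$. The cycle relation reads exactly $\varepsilon_\sigma[\sigma:\tau]=-\varepsilon_{\sigma'}[\sigma':\tau]$, which is the required one-$+1$/one-$-1$ property.

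In the case with boundary we repeat this argument with relative homology. A generator of $H_d(\Delta,\partial\Delta;\Z)\cong\Z$ is a chain $z$ with $\partial_d z$ supported on free ridges, so the same cancellation holds at every interior ridge, and strong connectivity again forces all $c_\sigma=\pm1$. Columns indexed by free ridges have a single nonzero entry and impose no condition. If a ridge lay in no facet the column would be zero, but purity excludes this.

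The main obstacle is the normalization argument that a generator has all coefficients $\pm1$. One has to check that $|c_\sigma|$ is constant, that it is nonzero, which holds because $z\neq 0$ and constancy spreads to all facets, and that the common value is $1$, which holds because otherwise $z/|c|$ would be an integral cycle, contradicting that $z$ generates. The remaining ingredient is the network-matrix total unimodularity lemma, which is standard and can be quoted, for instance from Schrijver.
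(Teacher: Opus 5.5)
Your proof is correct and is essentially the argument behind the cited result. The paper does not reprove \cite[Theorem 4.1]{DHK2011}; it only remarks that its proof carries over to pseudomanifolds, and that proof is the one you give: orient the facets consistently via orientability, so that each ridge lying in two facets gets opposite signs, and then use that a $\{0,\pm1\}$-matrix with at most two nonzero entries per column, of opposite signs whenever there are two, is totally unimodular. The paper uses the same normalization in the proofs of \Cref{t:orientablepseudomanifolds,t:orientablepseudomanifoldswithboundary}: a generator of $H_d(\Delta;\Z)$, resp.\ $H_d(\Delta,\partial\Delta;\Z)$, is rescaled to a $\pm1$-vector $\varepsilon$, and $D(\varepsilon)\partial_d^\top$ is then identified with the (whiskered) incidence matrix of the facet-ridge graph.
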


We note that~\cite[Theorem 4.1]{DHK2011} is originally stated for compact orientable manifolds  but it is easy to verify that the exact same proof works for orientable pseudomanifolds.

\begin{example}[\emph{A non-totally unimodular boundary map}]
    Let $\MS$  be the triangulation of the Möbius strip, shown in \Cref{fig:enter-label}. $\MS$ is a $2$-dimensional non-orientable pseudomanifold with boundary whose second boundary map $\partial_2$ is given by the following matrix:
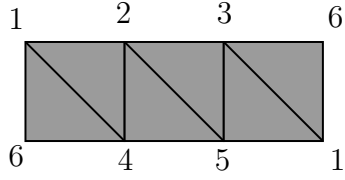
\begin{figure}
    
    \begin{center}

\tikzset{every picture/.style={line width=0.75pt}} 

\begin{tikzpicture}[x=0.75pt,y=0.75pt,yscale=-1,xscale=1]

\draw  [fill={rgb, 255:red, 155; green, 155; blue, 155 }  ,fill opacity=1 ] (100,103) -- (150,103) -- (150,153) -- (100,153) -- cycle ;
\draw  [fill={rgb, 255:red, 155; green, 155; blue, 155 }  ,fill opacity=1 ] (150,103) -- (200,103) -- (200,153) -- (150,153) -- cycle ;
\draw  [fill={rgb, 255:red, 155; green, 155; blue, 155 }  ,fill opacity=1 ] (200,103) -- (250,103) -- (250,153) -- (200,153) -- cycle ;
\draw    (100,103) -- (150,153) ;
\draw    (150,103) -- (200,153) ;
\draw    (200,103) -- (250,153) ;

\draw (90,85) node [anchor=north west][inner sep=0.75pt]   [align=left] {$\displaystyle 1$};
\draw (144,82) node [anchor=north west][inner sep=0.75pt]   [align=left] {$\displaystyle 2$};
\draw (195,82) node [anchor=north west][inner sep=0.75pt]   [align=left] {$\displaystyle 3$};
\draw (145,156) node [anchor=north west][inner sep=0.75pt]   [align=left] {$\displaystyle 4$};
\draw (194,156) node [anchor=north west][inner sep=0.75pt]   [align=left] {$\displaystyle 5$};
\draw (252,156) node [anchor=north west][inner sep=0.75pt]   [align=left] {$\displaystyle 1$};
\draw (90,154) node [anchor=north west][inner sep=0.75pt]   [align=left] {$\displaystyle 6$};
\draw (251,84) node [anchor=north west][inner sep=0.75pt]   [align=left] {$\displaystyle 6$};
\end{tikzpicture}
    \end{center}
    \caption{The triangulation $\MS$ of the M\"obius strip.}
    \label{fig:enter-label}
\end{figure}
$$
    \partial_2 = \left(\begin{array}{rrrrrr}
1 & 0 & 0 & 0 & 0 & 0 \\
0 & 1 & 1 & 0 & 0 & 0 \\
-1 & 0 & 0 & 1 & 0 & 0 \\
0 & -1 & 0 & 0 & 0 & 0 \\
0 & 0 & -1 & -1 & 0 & 0 \\
0 & 0 & 0 & 0 & 1 & 0 \\
1 & 0 & 0 & 0 & 0 & 1 \\
0 & 0 & 0 & 0 & -1 & -1 \\
0 & 1 & 0 & 0 & 1 & 0 \\
0 & 0 & 1 & 0 & 0 & 0 \\
0 & 0 & 0 & 0 & 0 & 1 \\
0 & 0 & 0 & 1 & 0 & 0
\end{array}\right),
$$
The rows and columns of $\partial_2$ are ordered according to the lexicographic order of the edges and $2$-faces of $\MS$, respectively. 
Deleting the rows corresponding to the boundary of $\MS$ (i.e., rows with a unique nonzero entry), we get a $(6\times 6)$-matrix whose determinant has absolute value equal to $2$. Consequently, $\partial_2$ is not totally unimodular. 
\end{example}

\subsection{Smith normal form}

The computation of homology groups with coefficients in a field $\K$ can be translated to a standard linear algebra problem. When working over $\Z$ however, standard linear algebra tools are no longer available, and so one often has to rely on results on free modules over principal ideal domains such as $\Z$.

Given a matrix $A \in \Z^{n \times m}$ of rank $r$, one version of the fundamental theorem of abelian groups says that there exist weakly unimodular matrices (i.e., matrices whose determinant has absolute value $1$) $S \in \Z^{n \times n}$ and $T \in \Z^{m \times m}$ such that 
$$
    S^{-1}AT^{-1} = \begin{pmatrix}
        \alpha_1           & \cdots & 0 &   0 & \cdots & 0 \\
     0                 & \ddots  & 0 &   \vdots &  & \vdots  \\
        0                 &\cdots   & \alpha_r & 0  & \cdots & 0  \\
        0 &   \cdots &  0 & 0 & \cdots & 0  \\
        \vdots &  & \vdots & \vdots & &\vdots\\
        0  & \cdots & 0& 0 & \cdots &0                          
    \end{pmatrix}  
    = \colon D,
$$

where $\alpha_i$ divides $\alpha_{i + 1}$ for $1 \leq i \leq r-1$ (see for example~\cite[Section 2.2]{S2016}). The sequence $\alpha_1, \dots, \alpha_r$ is unique up to sign, and the matrix $D$ is called the \emph{Smith normal form} of $A$ (\emph{SNF} for short), and we also write $\diag(\alpha_1, \dots, \alpha_r)$ for $D$ (without referencing the size of $D$). The elements $\alpha_i$ are called the \emph{elementary divisors} of $A$ and can be computed as
$$
    \alpha_i = \frac{d_i(A)}{d_{i-1}(A)},
$$
where $d_0(A)\coloneqq1$ and $d_i(A)$ is the greatest common divisor of the $(i\times i)$-minors of $A$ for $1\leq i\leq r$. 

The usefulness of the SNF of a matrix $A$ comes from the fact that $\im_\Z A \cong \im_\Z D$, where $\im_\Z A=\{A\mathbf{v}~:~\mathbf{v}\in \Z^m\}$ and similarly, for $D$. 
In other words, the columns of $A$ and $D$ span the same integral lattice. 
This implies for example that for a $d$-dimensional simplicial complex $\Delta$, the elementary divisors of $\partial_d$ encode the rank of $\tilde H_d(\Delta)$, as well as the algebraic data of the torsion of $\tilde H_d(\Delta)$. 
Below we gather the results  we will need about the SNF of boundary maps.
We want to emphasize that similar results can be stated without the topological context for arbitrary integral matrices.

\begin{theorem}{\cite[Theorems 11.4 and 11.5]{M1984}}\label{t:munkressnf}
    Let $\Delta$ be a simplicial complex and let $0\leq i\leq \dim \Delta$.  
    If $\partial_i$ is an $(n\times m)$-matrix of rank $r$ with elementary divisors $\alpha_1, \dots, \alpha_r$, then 
    $$
        \tilde H_{i - 1}(\Delta; \Z) \cong \Z^{\min(n,m) - r} \bigoplus_{j = 1}^{r} \Z_{\alpha_j}.
    $$
\end{theorem}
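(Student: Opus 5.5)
The plan is to read off $\tilde H_{i-1}(\Delta;\Z)$ from the Smith normal form of $\partial_i$, treating the torsion summand and the free summand separately. Write $\partial_i=SDT$ with $S\in\Z^{n\times n}$ and $T\in\Z^{m\times m}$ weakly unimodular and $D=\diag(\alpha_1,\dots,\alpha_r)$. Since $T$ is invertible over $\Z$, we have $\im_\Z\partial_i=S(\im_\Z D)$. Applying the automorphism $S^{-1}$ of $\Z^n=C_{i-1}(\Delta;\Z)$ then gives
\[
\operatorname{coker}\partial_i \;=\; \Z^n/\im_\Z\partial_i \;\cong\; \Z^n/\im_\Z D \;\cong\; \Z^{n-r}\oplus\bigoplus_{j=1}^r\Z_{\alpha_j}.
\]
This is the basic computation from which the statement is extracted.

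Next I would relate $\tilde H_{i-1}(\Delta;\Z)=\ker_\Z\partial_{i-1}/\im_\Z\partial_i$ to this cokernel. There is a short exact sequence
\[
0\to \tilde H_{i-1}(\Delta;\Z)\to \operatorname{coker}\partial_i\to \Z^n/\ker_\Z\partial_{i-1}\to 0 .
\]
The right-hand term is isomorphic to $\im_\Z\partial_{i-1}\subseteq\Z^{f_{i-2}}$, so it is torsion-free. Hence every torsion element of $\operatorname{coker}\partial_i$ already lies in $\tilde H_{i-1}(\Delta;\Z)$, and the torsion subgroups of the two groups coincide. This yields the summand $\bigoplus_{j=1}^r\Z_{\alpha_j}$ in the statement. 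Summands with $\alpha_j=1$ are trivial, so they may be included harmlessly.

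For the free part, I would take ranks. The free rank of $\tilde H_{i-1}(\Delta;\Z)$ equals $\rank\ker\partial_{i-1}-r$, because $\im_\Z\partial_i$ has rank $r$. It therefore remains to identify $\rank\ker\partial_{i-1}$ with $\min(n,m)$, and this is the step I expect to be the real obstacle.

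When $\partial_{i-1}=0$ and $n\le m$, we have $\rank\ker\partial_{i-1}=n=\min(n,m)$. Examples are $i=0$, or more generally the case where the relevant row space is all of $C_{i-1}$. In that situation the homology is exactly $\operatorname{coker}\partial_i$, and the stated formula follows immediately. In general, though, $\rank\ker\partial_{i-1}=n-\rank\partial_{i-1}$, which need not equal $\min(n,m)$. For instance, for the boundary of a triangle with $i=1$, we get $\min(3,3)-2=1\neq 0=\rank\tilde H_0$.

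So I would prove the torsion statement in full generality as above. For the free summand, I would prove the formula under the hypothesis $\rank\ker\partial_{i-1}=\min(n,m)$, noting that otherwise the correct free rank is $\rank\ker\partial_{i-1}-r$. That hypothesis holds in particular whenever $\partial_{i-1}$ vanishes on $C_{i-1}(\Delta;\Z)$ and $n\le m$. The later applications, which concern torsion of $\tilde H_{d-1}$, use only the torsion part, which is established unconditionally.
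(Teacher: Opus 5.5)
Your argument is correct, and so is your diagnosis: the free rank $\min(n,m)-r$ in the statement as printed is wrong in general. The paper gives no argument of its own here, only the citation to \cite{M1984}. The correct version of that result is the one you derive: the torsion coefficients of $\tilde H_{i-1}(\Delta;\Z)$ are the elementary divisors of $\partial_i$ that exceed $1$, and the Betti number is $\rank\ker_\Z\partial_{i-1}-\rank\im_\Z\partial_i=n-\rank\partial_{i-1}-r$. Your argument (Smith normal form computation of $\operatorname{coker}\partial_i$, then the exact sequence) is the standard proof of that corrected statement.

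You can streamline it slightly. Since $\Z^n/\ker_\Z\partial_{i-1}\cong\im_\Z\partial_{i-1}$ is free, the sequence splits, so $\operatorname{coker}\partial_i\cong\tilde H_{i-1}(\Delta;\Z)\oplus\im_\Z\partial_{i-1}$, which gives torsion and rank at once. Also, rather than adding the ``hypothesis'' $\rank\ker\partial_{i-1}=\min(n,m)$, which is just the free part of the conclusion restated, replace $\min(n,m)$ by $n-\rank\partial_{i-1}$ and state the corrected formula.

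Your triangle counterexample only fails for reduced homology: the unreduced $H_0$ is $\Z$, which the formula happens to predict. A convention-independent counterexample is the triangulated M\"obius strip $\MS$ from Section~\ref{s:prelim}, and it lies exactly in the setting where the paper applies the theorem ($i=d$, $H_d(\Delta;\Z)=0$). There $\partial_2$ is $12\times 6$ of rank $6$. All its elementary divisors are $1$, because $\operatorname{coker}\partial_2\cong H_1(\MS;\Z)\oplus\im_\Z\partial_1\cong\Z^6$ is torsion-free. The printed formula therefore predicts $\tilde H_1(\MS;\Z)=0$, whereas it is $\Z$. The boundary of a tetrahedron with $i=2$ also works: $\min(6,4)-3=1$, but $H_1(S^2;\Z)=0$.

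As you observe, every later use of the statement (\Cref{ex:moore}, \Cref{c:notidphomology}, \Cref{t:torsionreflexivitycriterion}) needs only the torsion part, which you proved unconditionally. So the paper's subsequent results are unaffected.
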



\begin{example}[\emph{Moore spaces and torsion}]\label{ex:moore}
    Given an abelian group $G$ and an integer $i$, a simplicial complex (or more generally, a CW complex) $\M_G^i$ is called a \emph{Moore space of $G$} if $H_0(\M_G^i; \Z) \cong \Z$,  $H_i(\M_G^i; \Z)\cong G$ and $H_j(\M_G^i; \Z)=0$ for $j\neq i$ (see e.g., \cite[Chapter 2]{Hatcher} for more information on Moore spaces). \Cref{fig:Moore} shows an example of a Moore space $\M_{\Z_3}^1$ for $\Z_3$. Its unique nontrivial homology group for $i > 0$ is $H_1(\M_{\Z_3}^1; \Z) \cong \Z_3$.
\begin{figure}
   \tikzset{every picture/.style={line width=0.75pt}} 
\begin{center}
\begin{tikzpicture}[x=0.75pt,y=0.75pt,yscale=-1,xscale=1]

\draw  [fill={rgb, 255:red, 155; green, 155; blue, 155 }  ,fill opacity=1 ] (193,114.2) -- (175.64,161.89) -- (131.68,187.27) -- (81.7,178.46) -- (49.07,139.58) -- (49.07,88.82) -- (81.7,49.94) -- (131.68,41.13) -- (175.64,66.51) -- cycle ;
\draw    (131.68,41.13) -- (125.47,93.4) ;
\draw    (81.7,49.94) -- (125.47,93.4) ;
\draw    (175.64,66.51) -- (125.47,93.4) ;
\draw    (175.64,66.51) -- (164.47,92.4) ;
\draw    (125.47,93.4) -- (147.47,110.4) ;
\draw    (81.7,49.94) -- (85.47,89.4) ;
\draw    (92.47,116.4) -- (125.47,93.4) ;
\draw    (92.47,116.4) -- (49.07,88.82) ;
\draw    (49.07,88.82) -- (85.47,89.4) ;
\draw    (85.47,89.4) -- (92.47,116.4) ;
\draw    (125.47,93.4) -- (85.47,89.4) ;
\draw    (92.47,116.4) -- (49.07,139.58) ;
\draw    (147.47,110.4) -- (92.47,116.4) ;
\draw    (147.47,110.4) -- (193,114.2) ;
\draw    (164.47,92.4) -- (193,114.2) ;
\draw    (164.47,92.4) -- (147.47,110.4) ;
\draw    (164.47,92.4) -- (125.47,93.4) ;
\draw    (92.47,116.4) -- (81.7,178.46) ;
\draw    (81.7,178.46) -- (135.47,142.4) ;
\draw    (147.47,110.4) -- (175.64,161.89) ;
\draw    (135.47,142.4) -- (131.68,187.27) ;
\draw    (135.47,142.4) -- (175.64,161.89) ;
\draw    (135.47,142.4) -- (92.47,116.4) ;
\draw    (147.47,110.4) -- (135.47,142.4) ;

\draw (177,47) node [anchor=north west][inner sep=0.75pt]   [align=left] {$\displaystyle 1$};
\draw (133.68,190.27) node [anchor=north west][inner sep=0.75pt]   [align=left] {$\displaystyle 1$};
\draw (40,70) node [anchor=north west][inner sep=0.75pt]   [align=left] {$\displaystyle 1$};
\draw (196,96) node [anchor=north west][inner sep=0.75pt]   [align=left] {$\displaystyle 2$};
\draw (69,181) node [anchor=north west][inner sep=0.75pt]   [align=left] {$\displaystyle 2$};
\draw (71,31) node [anchor=north west][inner sep=0.75pt]   [align=left] {$\displaystyle 2$};
\draw (126,21) node [anchor=north west][inner sep=0.75pt]   [align=left] {$\displaystyle 3$};
\draw (177.64,164.89) node [anchor=north west][inner sep=0.75pt]   [align=left] {$\displaystyle 3$};
\draw (36,136) node [anchor=north west][inner sep=0.75pt]   [align=left] {$\displaystyle 3$};
\draw (130.58,70.26) node [anchor=north west][inner sep=0.75pt]   [align=left] {$\displaystyle 4$};
\draw (143,115) node [anchor=north west][inner sep=0.75pt]   [align=left] {$\displaystyle 5$};
\draw (167,80) node [anchor=north west][inner sep=0.75pt]   [align=left] {$\displaystyle 6$};
\draw (85.58,72.67) node [anchor=north west][inner sep=0.75pt]   [align=left] {$\displaystyle 7$};
\draw (137.47,145.4) node [anchor=north west][inner sep=0.75pt]   [align=left] {$\displaystyle 8$};
\draw (81,119) node [anchor=north west][inner sep=0.75pt]   [align=left] {$\displaystyle 9$};

\end{tikzpicture}
\end{center}
 \caption{A Moore space for $\mathbb{Z}_3$.}
    \label{fig:Moore}
\end{figure}
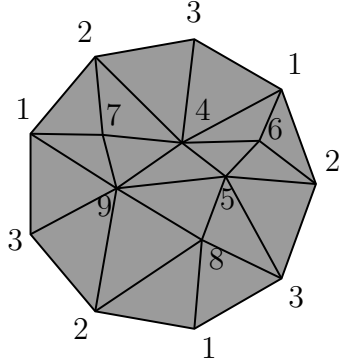

The $f$-vector of $\M_{\Z_3}^1$ is $(1,9,27,19)$ and using Sage~\cite{sagemath} we see that the elementary divisors of the second boundary map $\partial_2$ of $\M_{\Z_3}^1$ are 
$$
    \underbrace{1,\dots, 1}_{\text{$18$ times}}, 3,
$$
as predicted by \Cref{t:munkressnf}.
\end{example}

We will strongly employ the SNF of an integral matrix $A$ when studying properties of $\P_A$, such as reflexivity and the integer decomposition property.
We note that the idea of using special forms from linear algebra to study these properties for centrally symmetric polytopes has appeared before in the literature. 
For example, in~\cite{OH2014}, the authors use the Hermite normal form of a matrix to prove similar results to ours. However, the use of the SNF allows us to show that several results in~\cite{OH2014} hold under weaker assumptions. More precisely, we can replace the condition from \cite{OH2014}  of all maximal minors having  the same absolute value by a weaker condition on the greatest common divisors of the maximal minors. 
Another advantage of using the SNF comes from its natural interpretations in algebraic topology (see \cref{t:munkressnf}).

\subsection{Matroids}


One very useful perspective that we will often take throughout the paper is to look at the linear dependencies of the boundary maps of a simplicial complex.
In order to do so, we will use the language of matroids.
For more details on matroids we refer to \cite{Oxley}.

A \emph{matroid} $M$ is a pair $(E, \mathcal{I})$, where $E$ is a set, called the \emph{ground set} of $M$, and $\mathcal{I}$ is a collection of subsets of $E$ called \emph{independent sets} such that
\begin{enumerate}
    \item if $\sigma \in \mathcal{I}$ and $\tau \subset \sigma$ then $\tau \in \mathcal{I}$,
    \item $\emptyset \in \mathcal{I}$, and
    \item if $\sigma, \tau \in \mathcal{I}$ and $|\sigma| > |\tau|$, then there exists $x \in \sigma \setminus \tau$ such that $\tau \cup\{ x\} \in \mathcal{I}$.
\end{enumerate}
 The maximal independent sets of $M$ are called the \emph{bases} of $M$. A subset $\sigma \not \in \I$ is called a \emph{dependent set}. Minimal dependent sets are called \emph{circuits}.

Given an $(n\times m)$-matrix $A$, we define the matroid $M[A] = ([m], \I)$, where $\sigma \in \I$ if and only if the corresponding collection of columns of $A$ is linearly independent. A matroid $M$ is called \emph{realizable over a field $\K$} if there exists a matrix $A$ with coefficients in $\K$ such that $M$ is isomorphic to $M[A]$, i.e., there is a bijection  between the ground set of $M$ and the ground set of $M[A]$ that preserves independent sets and such that the preimage of an independent set is also an independent set. 
In this case, $A$ is called a \emph{representation} of $M$. When $M$ can be represented by the incidence matrix of a graph,  $M$ is called a \emph{graphic matroid}.
If a matroid $M$ is realizable over every field $\K$, $M$ is called \emph{regular}. 

\begin{example}\label{ex:cycleMatroid}
    Let $E = [n] = \{1,\dots, n\}$ and $\I = 2^{[n]} \setminus\{ [n]\}$. Then $M_n = (E, \I)$ is the graphic matroid of the $n$-cycle $C_n$. This matroid is also called \emph{uniform matroid} on $n$ elements of rank $n-1$.
\end{example}

The following well-known proposition will play a key role in future sections.

\begin{proposition}\cite{Oxley}\label{p:regulargraphical}
    A matroid $M$ is regular if and only if there exists a totally unimodular matrix $A$ that represents $M$. In particular, graphic matroids are regular.
\end{proposition}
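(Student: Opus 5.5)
The statement has two parts, and I would treat them separately, giving most of the work to the first. The first part says that a matroid is regular if and only if it is represented by a totally unimodular matrix. The second part says that graphic matroids are regular.

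\emph{Totally unimodular representation implies regular.} Suppose $M=M[A]$ with $A$ totally unimodular. Every minor of $A$ lies in $\{-1,0,1\}$, so $A$ has integer entries. For any field $\K$, let $A_\K$ denote the image of $A$ under $\Z\to\K$. For a set $\sigma$ of columns:
\begin{itemize}
\item $\sigma$ is independent over $\Q$ if and only if some $|\sigma|\times|\sigma|$ minor of the corresponding columns is nonzero;
\item such a minor equals $\pm1$ in $\Z$, and its image in $\K$ is still $\pm1\neq 0$;
\item a minor equal to $0$ stays $0$ in $\K$.
\end{itemize}
Hence $A$ and $A_\K$ have exactly the same independent column sets, so $M\cong M[A_\K]$ for every $\K$, and $M$ is regular.

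\emph{Regular implies a totally unimodular representation.} This direction is the main obstacle. Following \cite{Oxley}, I would use that a matroid representable over both $\mathrm{GF}(2)$ and $\mathrm{GF}(3)$ has a totally unimodular representation. The steps are:
\begin{itemize}
\item Fix a basis $B$ and take the standard $\mathrm{GF}(2)$-representation $[I\,|\,D]$.
\item The support of $D$ is the fundamental-circuit incidence matrix, so it is determined by $M$ alone.
\item Choose signs on this support: first normalize the entries along a spanning forest of the bipartite support graph to $+1$. The remaining signs are then forced by requiring that every square submatrix whose support is a cycle has determinant in $\{0,\pm1\}$.
\item Show the resulting signed matrix is totally unimodular. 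Use the ternary representation, which agrees with the signed matrix up to scaling (uniqueness of ternary representations), together with the known characterization of total unimodularity by Camion's cycle condition.
\end{itemize}
Since this is a standard but lengthy argument, in the paper I would simply cite \cite{Oxley} for this step rather than reprove it.

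\emph{Graphic matroids are regular.} By the equivalence above, it suffices to show that the incidence matrix $\partial_1$ of a graph is totally unimodular. I would argue by induction on the size $k$ of a square submatrix $N$; the case $k=1$ is clear since entries lie in $\{-1,0,1\}$. There are three cases:
\begin{itemize}
\item Some column of $N$ is zero: then $\det N=0$.
\item Some column of $N$ has exactly one nonzero entry: expand along it and apply induction.
\item Every column of $N$ has exactly two nonzero entries, namely $+1$ and $-1$: then the rows of $N$ sum to zero, so $\det N=0$.
\end{itemize}
Thus every minor of $\partial_1$ lies in $\{-1,0,1\}$, and the first part shows that graphic matroids are regular.
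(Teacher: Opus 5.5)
The paper gives no argument of its own here; it only cites \cite{Oxley} for the whole proposition. Your proposal is correct and more self-contained than that. The reduction argument for ``totally unimodular $\Rightarrow$ regular'' is right: nonzero minors are $\pm1$ and stay nonzero in every characteristic, and zero minors stay zero. Your column-expansion induction showing that $\partial_1$ is totally unimodular is the standard proof, so the ``in particular'' clause is fully established. Like the paper, you rely on \cite{Oxley} only for ``regular $\Rightarrow$ totally unimodular representation'', which is where the real content lies.

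A caution about your sketch of that step: as written, its last step is not justified. Your signing only enforces singularity of the chordless-cycle submatrices. That condition characterizes \emph{balanced} signings, and balanced $\{0,\pm1\}$-matrices need not be totally unimodular. Camion's criterion concerns all square submatrices with even row and column sums, and you do not say how the ternary representation verifies it. Nor is uniqueness of ternary representations what identifies the lifted ternary matrix with your signing.

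There is a much shorter argument that needs no normalization or signing:
\begin{enumerate}
\item Take a standard $\mathrm{GF}(3)$-representation $[I\,|\,D_3]$ and lift its entries $0,1,2$ to $0,1,-1$. This gives a real matrix $D$ whose reduction mod $2$ is the binary standard representation, since both have the fundamental-circuit support.
\item Suppose $D$ were not totally unimodular, and let $N$ be a smallest square submatrix with $\det N\notin\{0,\pm1\}$. Then $\det N=\pm2$. The $2\times 2$ case is immediate. For size $k\geq 3$, pivoting on a nonzero entry gives a Schur complement of size $k-1$ that is again a $\{0,\pm1\}$-matrix, has all proper minors in $\{0,\pm1\}$, and has determinant $\pm\det N$.
\item So $N$ is singular mod $2$ but nonsingular mod $3$. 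But in both representations, nonsingularity of $N$ (rows $R\subseteq B$, columns $C$) means exactly that $(B\setminus R)\cup C$ is a basis of $M$, a contradiction.
\end{enumerate}
Hence $D$ is totally unimodular, and therefore so is $[I\,|\,D]$. Moreover, $[I\,|\,D]$ represents $M$ over $\Q$, because its maximal minors are nonzero exactly when they are nonzero mod $3$.
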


Throughout this paper, we will often deal with matroids of the form $M[\partial_d]$ (and $M[\partial_d^\top]$), where $\partial_d$ is the $d$\textsuperscript{th} boundary map of a $d$-dimensional simplicial complex $\Delta$. 
In this case, it is useful to note that independent sets of $M[\partial_d]$ correspond to $d$-dimensional pure subcomplexes of $\Delta$ with trivial top homology, while circuits of $M[\partial_d]$ correspond to minimal $d$-dimensional subcomplexes of $\Delta$ with non-trivial  top homology.

\begin{example}[\emph{Totally unimodular and non-totally unimodular representations of regular matroids}]

    Consider the matroid $M_n$ from~\cref{ex:cycleMatroid}. 
    Since $M_n$ is graphic for every $n$, it is a regular matroid by~\cref{p:regulargraphical}. More generally, let $\Delta$ be a $d$-dimensional simplicial complex with $n$ facets, $H_d(\Delta; \Z) \cong \Z$ and such that $H_d(\Gamma;\Z)=0$ for any $d$-dimensional subcomplex $\Gamma$ of $\Delta$. This last condition ensures that the unique homology cycle of $\Delta$ uses all facets of $\Delta$.
    Then the matroid represented by the $d$\textsuperscript{th} boundary map $\partial_d$ of $\Delta$ is isomorphic to $M_n$, since the only dependent set of $M_n$ is $[n]$ itself.
    This does not imply that $\partial_d$ is totally unimodular, as can be seen for example from Bj\"orner's example (see \cref{ex:bjorner}), where any $(10\times 10)$-minor of $\partial_d$ not containing the column corresponding to the facet $123$ is divisible by $2$. 
    In particular, this idea allows us to generate non-totally unimodular representations of regular matroids.
    As we will see in later sections (for example~\cref{ex:nonunimodulartriangulation}), this turns out to be useful for proving that some geometric properties of polytopes do not only depend on the underlying matroid but its representation. 
\end{example}

We will need one more concept for matroids. Given a matroid $M$, its \emph{dual matroid} $M^\ast$ is the matroid on the same ground set as $M$, whose bases are the complements of the bases of $M$. If $M$ is a realizable (resp. regular) matroid, then $M^\ast$ is also realizable (resp. regular). When $M^\ast$ is a graphic matroid, we say that $M$ is a \emph{cographic matroid}. It is a well-known fact that a graphic matroid $M$ is cographic if and only if $M$ can be represented by the incidence matrix of a planar graph~\cite[Theorem 5.2.2]{Oxley}.

The following statement, which gives an instance of when duality between matroids occurs quite naturally, will be used in \Cref{s:cohomology}.
\begin{proposition}{\cite[Proposition 6.1]{DKM2011}}\label{l:matroid_duality}
    Let $\Delta$ be a $d$-dimensional simplicial complex such that $H_d(\Delta; \Z) = 0$.
     Then the matroid $M[\partial_d^\top]$ is dual to the matroid $M[\partial_{d-1}]$.
\end{proposition}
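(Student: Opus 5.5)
We need to show: if $H_d(\Delta;\Z)=0$, then $M[\partial_d^\top]$ and $M[\partial_{d-1}]$ are dual matroids. Both live on the ground set of $(d-1)$-faces of $\Delta$, and the plan is to work over $\Q$ and use the standard description of matroid duality through orthogonal complements. For any matrix $A$ with columns indexed by $E$, the matroid $M[A]$ depends only on the row space $R(A)\subseteq \Q^E$. Moreover, $M[A]^\ast = M[B]$ whenever the row space of $B$ is the orthogonal complement of the row space of $A$, i.e., $R(B)=\ker A$ (see \cite{Oxley}). So it suffices to check that the row space of $\partial_d^\top$ equals the kernel of $\partial_{d-1}$, or equivalently that the column space of $\partial_d$ equals $\ker \partial_{d-1}$.

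\textbf{Step 1: identify the row spaces.} The rows of $\partial_d^\top$ are the columns of $\partial_d$, so $R(\partial_d^\top)=\im_\Q \partial_d$. The row space of $\partial_{d-1}$ is $\im_\Q \partial_{d-1}^\top$, and its orthogonal complement is $\ker_\Q \partial_{d-1}$.

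\textbf{Step 2: compare $\im_\Q\partial_d$ with $\ker_\Q\partial_{d-1}$.} We always have $\im \partial_d\subseteq \ker\partial_{d-1}$, and the quotient is $\tilde H_{d-1}(\Delta;\Q)$. Equality therefore requires $\tilde H_{d-1}(\Delta;\Q)=0$.

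\textbf{Main obstacle.} This is exactly where the argument can fail. The stated hypothesis $H_d(\Delta;\Z)=0$ says only that $\ker\partial_d=0$, i.e., the columns of $\partial_d$ are independent; it does not force $H_{d-1}$ to vanish rationally. For example, take $\Delta$ to be a triangle $\langle 12,13,23\rangle$ together with a disjoint filled triangle $\langle 456\rangle$, so $d=2$. Then $M[\partial_2^\top]$ has rank $1$ on $6$ edges, while $M[\partial_1]$ has rank $4$, and $1\neq 6-4$. So the statement as written needs the hypothesis from \cite[Proposition 6.1]{DKM2011}, which I expect is $\tilde H_{d-1}(\Delta;\Q)=0$ (or an equivalent acyclicity condition in codimension one).

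Assuming that hypothesis, Step 2 gives $\im_\Q\partial_d=\ker_\Q\partial_{d-1}=(R(\partial_{d-1}))^\perp$. The duality lemma then yields $M[\partial_d^\top]=M[\partial_{d-1}]^\ast$.

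As a sanity check on ranks: $\operatorname{rank}\partial_d+\operatorname{rank}\partial_{d-1}=f_{d-1}$, since $\dim\ker\partial_{d-1}=f_{d-1}-\operatorname{rank}\partial_{d-1}$. This matches the defining rank identity for dual matroids, $r(M^\ast)=|E|-r(M)$.
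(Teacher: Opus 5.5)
Your analysis is right, and it catches a genuine misstatement: with only the hypothesis $H_d(\Delta;\Z)=0$, the proposition is false. Your disjoint-triangles example is a valid counterexample.

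The paper itself contains a pure counterexample that satisfies the stated hypothesis. The M\"obius strip $\MS$ has $f$-vector $(6,12,6)$, $H_2(\MS;\Z)=0$ and $H_1(\MS;\Z)\cong\Z$. So $M[\partial_2^\top]$ has rank $\rank\partial_2=6$, whereas $M[\partial_1]^\ast$ has rank $12-\rank\partial_1=12-5=7$.

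Your rank identity shows more: duality forces $\rank\partial_d=\dim_\Q\ker_\Q\partial_{d-1}$, i.e.\ $\tilde H_{d-1}(\Delta;\Q)=0$. Conversely, your orthogonal-complement argument derives the duality from $\tilde H_{d-1}(\Delta;\Q)=0$ alone and never uses $H_d(\Delta;\Z)=0$. The correct statement is therefore:
\begin{center}
$M[\partial_d^\top]$ is dual to $M[\partial_{d-1}]$ if and only if $\tilde H_{d-1}(\Delta;\Q)=0$,
\end{center}
and the hypothesis on $H_d$ is superfluous. Here homology is reduced, matching the paper's convention $\partial_0(\{i\})=\emptyset$; this only matters for $d=1$.

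The paper gives no proof to compare with, only the citation to \cite{DKM2011}. Your argument is the standard one: the dual of a represented matroid is represented by the orthogonal complement of its row space. It is complete for the corrected statement.

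The proposition is used in only one place, the proof of \Cref{p:cohomologycones}, and there it is applied to cones over graphs. Cones are contractible, so $\tilde H_1(\Delta;\Q)=0$ holds and that result is unaffected.
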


\subsection{Polytopes}
In this section, we will provide necessary background on polytopes. For more information on this topic we refer to \cite{BeckRobbins} and \cite{Ziegler}.

A \emph{polytope} $P$ is the convex hull of finitely many points $\bv_1,\dots, \bv_n\in \R^d$. 
If $\bv_1,\dots,\bv_n\in \Z^d$, then $P$ is called a \emph{lattice polytope}.
A \emph{supporting hyperplane} of $P$ is a hyperplane $H=\{\bx\in \R^d~:~\ba^\top \bx=b\}$ with $\ba\in\R^d$ and $b\in \R$ such that $P\subseteq \{ \bx \in \R^d\colon \ba^\top \bx\leq b\}$. A \emph{face} $F$ of $P$ is the intersection $P\cap H$ of $P$ with a supporting hyperplane $H$. By convention, we will also consider $\emptyset$ and $P$ as faces. 
The \emph{dimension} of a face is the dimension of its affine hull. $0$-dimensional and $1$-dimenisonal faces are called \emph{vertices} and \emph{edges}, respectively. Faces of dimension $\dim P-1$ are called \emph{facets}. 
By a famous theorem of Weyl-Minkowski (see e.g., \cite{Ziegler}), every polytope $P\subseteq \R^d$ can also be written as a bounded intersection of finitely many closed half-spaces, i.e., $P=\{\bx\in \R^d~:~A^\top \bx\leq b\}$ for some matrix $A\in \R^{m\times d}$ and some $b\in \R^m$. 
In particular, if this description is irredundant, then each inequality (i.e., row of $A$) corresponds to a facet of $P$.
We call polytopes $P$ and $Q$ \emph{combinatorially equivalent} if their face lattices are isomorphic, i.e., there is a bijection between their vertex sets that extends to a bijection between the sets of faces of $P$ and $Q$. 
A lattice polytope $P\subseteq \R^d$ is called \emph{reflexive} if  $P=\{\bx\in \R^d~:~A^\top \bx\leq \textbf{1}\}$ for some matrix $A\in \Z^{m\times d}$. 
Equivalently, its \emph{polar dual} $\P^\lor\coloneqq \{ \by\in \R^d~:~ \bx^\top \by\leq 1 \qforall \bx\in P\}$ is a lattice polytope. 
An important criterion for reflexivity from the theory of totally unimodular matrices is the following.
\begin{theorem} {\cite[Theorem 19.1]{S1986}}\label{t:TUreflexive}
    Let $P\subseteq \R^d$ be a lattice polytope with vertices $\bv_1,\ldots,\bv_n$. If the matrix with columns $\bv_1,\ldots,\bv_n$ is totally unimodular, then $P$ is reflexive.
\end{theorem}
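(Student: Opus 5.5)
The plan is to compute the facet normals of $P$ directly and show they are integral, using that the vertices are the columns of a totally unimodular matrix. Throughout I will assume, as is implicit in the notion of reflexivity and automatic in the centrally symmetric full-dimensional setting of this paper, that $P$ is full-dimensional and contains the origin in its interior. Without this the conclusion cannot hold: for instance $\conv(0, e_1, e_2)$ has a totally unimodular vertex matrix but is not reflexive. The approach is the standard Cramer's rule argument underlying \cite[Theorem 19.1]{S1986}.

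First, since $0$ lies in the interior of $P$, no facet-defining hyperplane passes through the origin. Each facet $F$ can therefore be written as $F=P\cap\{\bx : \ba_F^\top \bx = 1\}$, with $\ba_F^\top \bx\le 1$ valid on $P$, after rescaling the normal. Hence $P=\{\bx : \ba_F^\top\bx\le 1 \text{ for all facets } F\}$. It remains to show that each $\ba_F$ lies in $\Z^d$, which gives reflexivity by definition.

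Next, fix a facet $F$. Since $F$ has dimension $d-1$ and its affine hull misses the origin, its linear span is all of $\R^d$. So among the vertices of $P$ lying on $F$ we can choose $d$ linearly independent ones, say $\bv_{i_1},\dots,\bv_{i_d}$. Let $B$ be the $d\times d$ matrix with these columns. Then $\ba_F$ is the unique solution of $B^\top \ba_F=\mathbf{1}$.

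The key step is the following. The matrix $B$ is a square submatrix of the totally unimodular matrix $[\bv_1|\cdots|\bv_n]$ and is nonsingular, so $\det B=\pm1$. Every cofactor of $B$ is a minor of a totally unimodular matrix and hence lies in $\{0,\pm1\}$. By Cramer's rule $B^{-1}$, and thus $B^{-\top}$, is integral. Therefore $\ba_F=B^{-\top}\mathbf{1}\in\Z^d$, as required.

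The only delicate point is the geometric step guaranteeing $d$ linearly independent vertices on each facet with right-hand side normalizable to $1$. This is exactly where the interior-origin hypothesis is used; the remaining steps are routine linear algebra.
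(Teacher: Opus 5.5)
Your proof is correct, and you are right that the origin must be an interior point, which the printed statement omits. It is essentially the argument behind the citation: the paper only invokes \cite[Theorem 19.1]{S1986}, namely integrality of polyhedra given by a totally unimodular system with integral right-hand side, applied to the polar $P^\lor=\{\by : [\bv_1|\cdots|\bv_n]^\top\by\leq\mathbf{1}\}$, whose vertices are exactly your normals $\ba_F$ and whose integrality is established by the same Cramer's-rule step.

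The paper also applies the result to non-full-dimensional polytopes such as $\P_\Delta$ and $\P^\Delta$, with reflexivity then meant relative to $\aff(P)\cap\Z^d$, and your argument carries over directly. Take $r=\dim P$ linearly independent vertices on a facet, choose $r$ rows on which they form a nonsingular square submatrix $C$ (so $\det C=\pm1$), and let $\ba$ be $C^{-\top}\mathbf{1}$ on those rows and $0$ elsewhere; this gives an integral functional equal to $1$ on the facet.
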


Given an $m$-dimensional lattice polytope $P \subset \R^d$, the numerical function $E(k, P) = |kP \cap \Z^m|$, which can be shown to be a polynomial in $k$, is called the~\emph{Ehrhart polynomial} of $P$, and the generating function
$$
    E_P(t) = \sum_{k = 0}^\infty E(k, P)t^k = \frac{h_P^\ast(t)}{(1 - t)^{m + 1}}= \frac{h_0^\ast+h_1^\ast t+\cdots+h_m^\ast t^m}{(1 - t)^{m + 1}}
$$
is called the~\emph{Ehrhart series} of $P$. The polynomial $h_P^\ast(t)=h_0^\ast+h_1^\ast t+\cdots+h_m^\ast t^m$  is called the~\emph{$h^\ast$-polynomial} of $P$.

We say that a lattice polytope $P\subseteq \R^d$ has the \emph{integer decomposition property} (IDP for short) if for every $k\in \N$, every $\textbf{x}\in  kP\cap \Z^d$ can be written as $\textbf{x}=p_{1} + \dots + p_{k}$, where $p_i\in P\cap \Z^d$. This is the case if and only if the Hilbert series of $\K[P]$ (see \Cref{subsect:TrAndGr} for the definition) and the Ehrhart series 
of $P$ agree.
A lattice polytope $P$ is called \emph{spanning}, if any lattice point in $\operatorname{aff}(P)\cap \Z^d$ can be written as an integer combination of the lattice points in $P$.  It is easily seen that every polytope which is IDP, is also spanning. 

\subsubsection{Centrally symmetric polytopes}\label{subs:cs}
The main object of interest of this article are \emph{centrally symmetric} polytopes, i.e., polytopes $P$ with $P=-P$. Every such polytope $P$ can be written as
\begin{equation}\label{eq:cs}
    P =  \conv\{\bv_1,\dots, \bv_n, -\bv_1,\dots, -\bv_n\}\eqqcolon \conv [A |-A],
\end{equation}
where $\{\pm \bv_1,\ldots,\pm \bv_n\}$ are the vertices of $P$ and $A$ is the matrix with columns $\bv_1,\ldots, \bv_n$. We will also write $\P_A$ for the  polytope in \eqref{eq:cs}. While the representation in \eqref{eq:cs} is the smallest possible one (in terms of the number of columns), adding other lattice points of $P$ as columns to $A$ will not change the polytope. This will be useful in \Cref{s:grobner}. We will frequently have the situation that the entries of $A$ are just $-1$, $0$ and $1$ (see \Cref{fig:cs polytopes} for examples). 
Maybe the most prominent and also easiest example of a centrally symmetric polytope $P$ arises when we take the convex hull of the unit vectors and their negatives, i.e., $P=\conv(\pm \be_i~:~1\leq i\leq n)\subseteq \R^n$. Any polytope that is combinatorially equivalent to this polytope will be referred to as \emph{crosspolytope}. 
Another important example is provided by the \emph{symmetric edge polytope}  (\emph{SEP} for short) $\P_G$ of  a graph $G$, which  is defined as
\[
\P_G=\conv [\partial_1 | -\partial_1]
\]
(see  \cite{MHN2011} for the original definition). 
More recently,  in \cite{DJK2024}, this class of polytopes has been extended to regular matroids by associating to a regular matroid $M[A]$~--~represented by a matrix $A$~--~the polytope $\P_A$ as defined in \eqref{eq:cs}. These polytopes are referred to as \emph{generalized symmetric edge polytopes} and share several nice properties with usual symmetric edge polytopes.

\begin{figure}[h!]
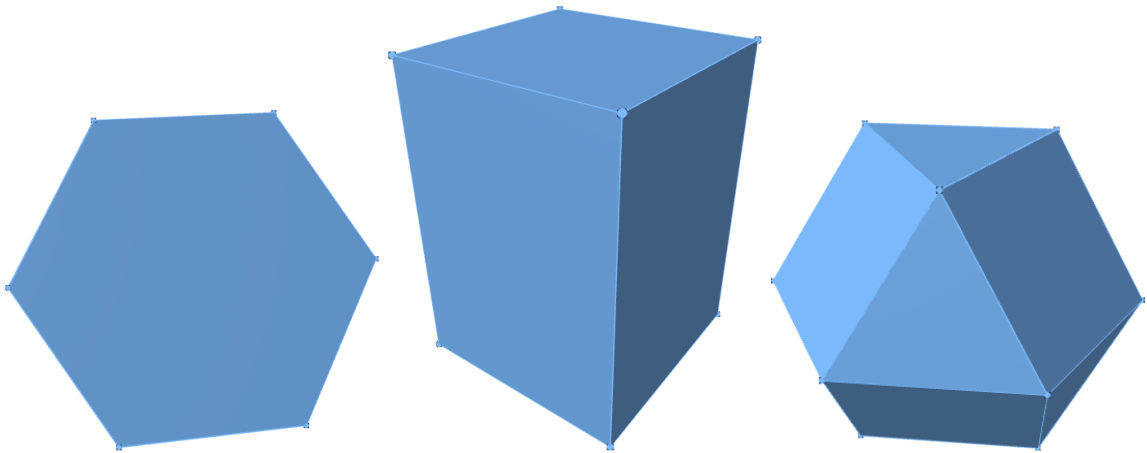

    \centering
    \begin{subfigure}{0.3\textwidth}
        \includegraphics[width= \textwidth]{Hexagon.png}
    \end{subfigure} 
    \begin{subfigure}{0.3\textwidth}
        \includegraphics[width= \textwidth]{Cube.png}
    \end{subfigure}
    \begin{subfigure}{0.3\textwidth}
        \includegraphics[width= \textwidth]{rootPolytope.png}
    \end{subfigure}
    \caption{Centrally symmetric lattice polytopes defined as the convex hull of the matrices $[A | -A]$, where $M$ is the incidence matrix of the cycle graphs $C_3, C_4$ and the complete graph $K_4$ respectively. In other words, these polytopes are the symmetric edge polytopes of these graphs.}
\label{fig:cs polytopes}
\end{figure}

In this article, we want to investigate a class of centrally symmetric polytopes $\P_A$, that generalize symmetric edge polytopes of graphs from a topological perspective. Since any graph can be interpreted as a $1$-dimensional simplicial complex, it is natural to extend the definition of the symmetric edge polytope of a graph to arbitrary simplicial complexes. More precisely, we look at the following class of polytopes:

\begin{definition}[\emph{Symmetric (co)homology polytopes}] 
    Let $\Delta$ be a $d$-dimensional simplicial complex. The polytope 
    \[
    \P_\Delta \coloneqq \P_\partial\coloneqq\conv[\partial_d|-\partial_d]
    \]
    is called the \emph{symmetric homology polytope of $\Delta$}. The polytope 
    \[
    \P^\Delta \coloneqq \P_{\partial^\top}\coloneqq \conv[\partial_d^\top|-\partial_d^\top]
    \]
    is called the \emph{symmetric cohomology polytope of $\Delta$}.
\end{definition}
On the one hand, (similar as for graphs) the topological interpretation of the underlying matrices will allow us to translate geometric properties of the polytopes $\P_\Delta$ and $\P^\Delta$ into topological properties of $\Delta$ and vice versa. 
On the other hand, when generalizing the definition of a symmetric edge polytope from graphs to arbitrary simplicial complexes, we lose some crucial properties. In particular, the underlying matrix does not need to be totally unimodular (see  \Cref{p:regulargraphical}) anymore~--~a property which is true for graphs as well as for regular matroids and which allows to show that these polytopes are reflexive in an easy way. We will need to account for this issue when investigating properties, such as reflexivity or when deriving a facet description.

Polytopes $P$ and $Q$ are called \emph{unimodularly equivalent} if, after possibly embedding them in the same ambient space by adding coordinates, there exists a weakly unimodular  matrix $A$ such that $P= AQ$. We write $P\cong_u Q$ in this case.

We will need the following criterion for unimodular equivalence from \cite{DJK2024}.

\begin{theorem}{\cite[Theorem 3.2 and Remark 3.4]{DJK2024}}\label{t:matroid_equivalence}
    If $A$ and $B$ are totally unimodular matrices representing isomorphic matroids, then  $\P_A$ and $\P_B$ are unimodularly equivalent.
\end{theorem}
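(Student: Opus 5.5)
The plan is to reduce both matrices to a standard form by operations that either are unimodular or leave the polytope unchanged, and then to compare the standard forms using Camion's uniqueness theorem. Let $A\in\Z^{n\times m}$ and $B\in\Z^{n'\times m}$ be totally unimodular, with the matroid isomorphism realised by a bijection of columns. Permuting columns does not change $\P_A$, so we may assume $M[A]=M[B]$ on the ground set $[m]$, of common rank $r$.

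\textbf{Step 1 (normal form).} Fix a basis $\beta$ of the matroid, say the first $r$ columns. The submatrix $A_\beta$ has full column rank, and every $r\times r$ minor of it lies in $\{0,\pm 1\}$, so by total unimodularity some such minor is $\pm1$. By Cramer's rule, every lattice point of the real span $L$ of the columns of $A$ is then an integer combination of the columns of $A_\beta$. Hence $L\cap\Z^n$ is a saturated sublattice with lattice basis the columns of $A_\beta$. Extending this to a basis of $\Z^n$ gives a weakly unimodular matrix $U$ with
\[
UA=\begin{pmatrix} I_r & A' \\ 0 & 0\end{pmatrix}.
\]
The matrix $[I_r\,|\,A']$ is again totally unimodular: its entries and minors are, up to sign, $r\times r$ minors of $A$ divided by $\det A_\beta=\pm1$. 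Dropping the zero rows is permitted, since unimodular equivalence allows embedding in a common ambient space. Treat $B$ in the same way, obtaining $[I_r\,|\,B']$.

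\textbf{Step 2 (same support).} For a column $j\notin\beta$, the support of column $j$ of $A'$ is the fundamental circuit of $j$ with respect to $\beta$, minus $j$ itself. This set is determined by the matroid alone. Therefore $A'$ and $B'$ are $\{0,\pm1\}$-matrices with identical zero patterns.

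\textbf{Step 3 (signing).} Here we invoke Camion's theorem: two totally unimodular matrices with the same support differ only by multiplying some rows and some columns by $-1$. This is the main obstacle. If a self-contained argument is wanted, I would prove it by induction on a spanning forest of the bipartite support graph of $A'$. First normalise signs along the forest edges by row and column scalings. Then show that every remaining nonzero entry is forced: a fundamental cycle of the forest whose signs were inconsistent would yield a square submatrix of determinant $\pm2$, contradicting total unimodularity.

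\textbf{Step 4 (conclusion).} Row scalings by $-1$ are weakly unimodular maps. Scaling a column by $-1$ swaps a vertex with its antipode, so it leaves $\conv[A\,|\,-A]$ unchanged. After scaling $[I_r\,|\,A']$ into $[I_r\,|\,B']$, we obtain
\[
\P_{[I_r|A']}=D\,\P_{[I_r|B']}
\]
for a diagonal $\pm1$ matrix $D$. Composing with the maps $U$ from Step 1 for $A$ and for $B$ gives $\P_A\cong_u\P_B$.
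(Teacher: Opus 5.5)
The paper does not prove this statement; it imports it from \cite[Theorem 3.2 and Remark 3.4]{DJK2024}, so there is no internal argument to compare against. Judged on its own, your proof is correct and follows the standard route:
\begin{itemize}
\item pass to the normal form $[I_r\,|\,A']$ (Step~1 is exactly the reduction needed when $A$ lacks full row rank);
\item read the support of $A'$ off the fundamental circuits;
\item invoke Camion's theorem on the uniqueness of totally unimodular signings up to $\pm1$ scaling of rows and columns;
\item use that flipping column signs leaves $\conv[A\,|\,-A]$ unchanged, while row sign flips are weakly unimodular.
\end{itemize}

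Two points need tightening. First, $A_\beta$ is an $n\times r$ matrix, so ``$\det A_\beta$'' only makes sense when $n=r$. What you actually use is a row set $R$ with $\det A_{R,\beta}=\pm1$. Total unimodularity of $[I_r\,|\,A']$ then follows from two facts. One is the identity $[I_r\,|\,A']=A_{R,\beta}^{-1}A_{R,[m]}$, which holds because both sides are the unique $X$ with $A=A_\beta X$. The other is that every $k\times k$ minor of $[I_r\,|\,A']$ equals $\pm$ an $r\times r$ minor, obtained by padding with unit columns. Second, the self-contained proof of Camion's theorem you sketch does not work as written. A fundamental cycle of the spanning forest may have chords, i.e.\ further nonzero entries in its rows and columns. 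Flipping one entry changes the determinant by $2$ times a cofactor, so with chords present the flip can turn $1$ into $-1$ or leave the determinant unchanged. In that case no $\pm2$ minor appears. The standard repair is to choose, among all entries where the two forest-normalised matrices disagree, one that can be closed into a cycle by a shortest possible path of agreeing entries. Minimality forces this cycle to be chordless: a chord through agreeing entries would shorten the path, and a chord through a disagreeing entry would give a shorter such cycle. Its submatrix is therefore a pure cycle matrix with determinant in $\{0,\pm2\}$, equal to $0$ for one matrix and $\pm2$ for the other, which contradicts total unimodularity.
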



\subsection{Triangulations and Gr\"obner basis}\label{subsect:TrAndGr}
A \emph{triangulation} $\Delta$ of a $d$-dimensional lattice polytope $P\subseteq \R^m$ is a subdivision of $P$  into lattice simplices of dimension at most $d$.  
In the following, we use $\mathcal{V}(P)$ to denote the set of vertices of $P$. 

A triangulation $\mathcal{T}$ is called \emph{regular} if there exists a height function $\bw\in \R^{\mathcal{V}(P)}$ such that for every simplex $S\in \mathcal{T}$, the simplex $\conv\left((p,\bw(p))~:~p\in \mathcal{V}(S)\right)$ is a lower face of $\conv\left((p,\bw(p))~:~p\in \mathcal{V}(S)\right)$, i.e., a face having an outer normal vector with negative last coordinate. 
Such a triangulation is \emph{unimodular} if all its maximal simplices are unimodular, i.e., they have Euclidean volume $\frac{1}{d!}$ and hence  normalized volume equal to $1$.
In other words, a unimodular simplex, is an integral simplex of minimal Euclidean volume. Unimodular triangulations do not always exist, the simplest example being Reeve's tetrahedron in dimension $3$.
This is, in particular the case, if the lattice $M_P\coloneqq \aff(P)\cap \Z^m$ spanned by $P$ is not unimodularly equivalent to $\Z^m$. 
In this situation, the minimal volume of an $d$-dimensional  simplex with vertices in $M_P$  equals $\frac{|\det(M_P)|}{d!}$, where $\det(M_P)$ is the determinant of a matrix whose columns are a lattice basis for $M_P$. 

Motivated by this, we will call a $d$-simplex with vertices in $M_P$  \emph{unimodular with respect to $M_P$} if it has Euclidean volume $\frac{|\det(M_P)|}{d!}$. 
Similarly, a triangulation $\Delta$ of $P$ is called \emph{unimodular with respect to $M_P$} if each maximal simplex in $\Delta$ has this property.
This situation will occur several times in this paper.

To calculate a regular, possibly unimodular (with respect to $M_P$), triangulation of $P$, one often computes a Gr\"obner basis. 
Let $\mathbb{K}$ be a field and $\mathbb{K}[t_1^{\pm},\ldots,t_d^{\pm},s]$ be the Laurent polynomial ring. 
For a lattice polytope $P\subseteq \R^m$ and any $\alpha=(\alpha_1,\ldots,\alpha_m)\in P\cap \Z^m$, we set $t^\alpha=t_1^{\alpha_1}\cdots t_m^{\alpha_m} \in \mathbb{K}[t_1^{\pm},\ldots,t_m^{\pm},s]$. 
The ring  $\mathbb{K}[P]\coloneqq \mathbb{K}[t^\alpha s~:~\alpha\in P\cap \Z^m]$ is the \emph{toric ring} of $P$, i.e., $\mathbb{K}[P]$ is the subring of $\mathbb{K}[t_1^{\pm},\ldots,t_m^{\pm},s]$ generated by the monomials $t^\alpha s$ for $\alpha\in P\cap \Z^m$. Let $S=\mathbb{K}[x_\alpha~|~\alpha\in P\cap \Z^m]$, where $\deg(x_\alpha)=1$. The \emph{toric ideal} $I_{P}$ of $P$ is the kernel of the surjective ring homomorphism
\[
\pi_P \colon S\to \mathbb{K}[P] \text{, with } \pi_P(x_\alpha) \coloneqq t^\alpha s.
\]

A \emph{term order} on a polynomial ring, here $R=\mathbb{K}[x_1,\ldots,x_m]$, is a  total order $\prec$ on the set of monomials, such that for all monomials $a,b,c$ we have $1\prec a$, and $a\prec b$ implies $ac\prec bc$. We will mostly consider the \emph{degree reverse lexicographic ordering} $\prec_{\rev}$ (degrevlex), induced by $x_1\prec\cdots \prec x_m$; we have $x_1^{\alpha_1}\cdots x_m^{\alpha_m}\prec_{\rev}x_1^{\beta_1}\cdots x_m^{\beta_m}$  if and only if $\sum_{i=1}^m \alpha_i<\sum_{i=1}^m \beta_i$, or  $\sum \alpha_i=\sum \beta_i$ and $\alpha_\ell>\beta_\ell$ for $\ell=\min\{j~:~\alpha_j\neq \beta_j\}$. For a polyomial $f\in R$, its \emph{initial term} $\init_\prec(f)$ is the largest monomial appearing in $f$. The \emph{initial ideal} $\init_\prec(I)$ of an ideal $I\subseteq R$ is the ideal generated by all initial terms in $I$. We call a set of  generators $g_1,\ldots,g_r$ of $I$ a \emph{Gr\"obner basis} (w.r.t. $\prec$) if $\init_\prec(g_1),\ldots,\init_\prec(g_r)$ generate $\init_\prec(I)$. 

A Gr\"obner basis of the toric ideal $I_{P}$ of a lattice polytope $P$ gives rise to a regular triangulation $\Delta_{P}$ of $P$ on the vertex set $P\cap \Z^d$  as follows: A  set $M\subseteq P\cap \Z^d$ is the vertex set of a simplex in $\Delta_{P}$ if and only if 
$$
\prod_{\alpha\in M}x_\alpha\notin \sqrt{\init_\prec(I_{P})}= \bigl(f: f^k\in \init_\prec(I_{P}) \text{ for some } k\bigr).
$$ Indeed, every regular triangulation of $P$ can be constructed this way \cite{S1996}. Moreover, the following statement holds:

\begin{theorem}{\cite[Corollary 8.9]{S1996}}\label{t:sturmfels}
The triangulation $\Delta_{P}$ is regular and \emph{unimodular w.r.t. the lattice} $\aff(P)\cap \Z^d$  if and only if $\init_\prec(I_{P})=\sqrt{\init_\prec(I_{P})}$. This is the case if and only if $I_{P}$ has a squarefree Gr\"obner basis, i.e., a Gr\"obner basis of $I_P$ whose initial terms are squarefree.
\end{theorem}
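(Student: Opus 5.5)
The plan is to follow Sturmfels' approach: treat $\init_\prec(I_P)$ as a monomial ideal and compare it with its radical, which is the Stanley--Reisner ideal of $\Delta_P$.

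First, regularity. Since $I_P$ has finitely many generators, there is an integral weight vector $\bw\in\R^{P\cap\Z^d}$ that represents $\prec$ on $I_P$, meaning $\init_\bw(I_P)=\init_\prec(I_P)$. I would show that the regular subdivision of $P$ induced by lifting each $\alpha\in P\cap\Z^d$ to height $\bw(\alpha)$ is a triangulation. I would also show that its Stanley--Reisner ideal is exactly $\sqrt{\init_\prec(I_P)}$. The key point is that a monomial $\prod_{\alpha\in M}x_\alpha$ lies outside $\sqrt{\init_\bw(I_P)}$ if and only if $\conv(M)$ lifts to a lower face of the lifted polytope. This is a linear programming duality argument: a monomial in the initial ideal corresponds to a binomial $x^u-x^v\in I_P$ with $\bw\cdot u>\bw\cdot v$, that is, a cheaper representation of the same point $\sum u_\alpha \alpha$. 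This gives the correspondence in the definition of $\Delta_P$ and shows that $\Delta_P$ is regular. I take this part as known from \cite{S1996}.

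Second, unimodularity. The main step, and the main obstacle, is a multiplicity formula for $\init_\prec(I_P)$. For a maximal simplex $\sigma$ of $\Delta_P$ with prime $\mathfrak p_\sigma=(x_\alpha:\alpha\notin\sigma)$, I want
\[
\operatorname{mult}_{\mathfrak p_\sigma}\bigl(\init_\prec(I_P)\bigr)=\text{normalized volume of }\sigma\text{ with respect to the lattice } L_P,
\]
where $L_P$ is the lattice generated by $P\cap\Z^d$ (homogenized). To prove this, I would localize at $\mathfrak p_\sigma$, i.e. invert the variables $x_\alpha$ with $\alpha\in\sigma$. The ring $S/\init_\prec(I_P)$ at $\mathfrak p_\sigma$ then has length equal to the number of standard monomials supported off $\sigma$ that reduce modulo $\sigma$. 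These correspond bijectively to the cosets of $L_P$ modulo the sublattice generated by the vertices of $\sigma$. The number of such cosets is the index, which is the normalized volume. Together with equality of degrees, $\deg S/I_P=\deg S/\init_\prec(I_P)=\sum_\sigma \operatorname{mult}$, this would show there are no embedded or extra components beyond the maximal simplices.

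It then follows that $\init_\prec(I_P)$ is radical exactly when every multiplicity equals $1$, i.e. when every maximal simplex is unimodular with respect to $L_P$. To match the statement, which uses $\aff(P)\cap\Z^d$, I would note the following. If every maximal simplex is unimodular for $\aff(P)\cap\Z^d$, then its vertices already generate that lattice, so $L_P=\aff(P)\cap\Z^d$. In the other direction, I would rely on the paper's convention that unimodularity is measured in the affine lattice spanned by $P$; this is the point where the statement would need care if $P$ were not spanning.

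Finally, the Gröbner basis equivalence is formal. A monomial ideal equals its radical if and only if it is generated by squarefree monomials. The initial terms of the reduced Gröbner basis are the minimal monomial generators of $\init_\prec(I_P)$. Hence $\init_\prec(I_P)$ is radical if and only if the reduced Gröbner basis has squarefree initial terms. Conversely, if some Gröbner basis has squarefree initial terms, those terms generate $\init_\prec(I_P)$, so the ideal is squarefree and therefore radical.
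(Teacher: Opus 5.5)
\emph{Verdict: the proposal has a genuine gap in the direction ``every maximal simplex unimodular $\Rightarrow$ $\init_\prec(I_P)$ radical''.} The rest is sound.

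\textbf{What is fine.} The paper gives no proof of this statement; it only cites Sturmfels. Your outline follows that source: the radical of the initial ideal is the Stanley--Reisner ideal of $\Delta_P$, the multiplicity at $\mathfrak p_\sigma$ is the normalized volume of $\sigma$, and then the squarefree criterion. Granting the multiplicity formula, the direction ``$\init_\prec(I_P)$ radical $\Rightarrow$ every maximal simplex is unimodular w.r.t.\ $L_P$'' is correct. Your final paragraph on squarefree Gr\"obner bases is also correct.

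Your caveat about the lattice is in fact necessary, not just cautious. Take the paper's own $\P_{\Delta_{\R\PP^2}}$. Its Gr\"obner basis consists only of the binomials $x_{F^+}x_{F^-}-z^2$, so the initial ideal is squarefree. But the elementary divisor $2$ of $\partial_2$ makes $L_P$ a sublattice of index $2$ in $\aff(P)\cap\Z^{15}$. Hence no full-dimensional simplex with vertices in $P\cap\Z^{15}$ is unimodular w.r.t.\ $\aff(P)\cap\Z^{15}$. The statement is only correct with $L_P$.

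\textbf{The gap.} Multiplicity one at every minimal prime does not make a monomial ideal radical. For example, $(x^2,xy)=(x)\cap(x^2,y)$ has multiplicity one at its only minimal prime $(x)$ but is not radical. You need to exclude embedded primes, and the identity $\deg S/I_P=\deg S/\init_\prec(I_P)=\sum_\sigma\operatorname{mult}_{\mathfrak p_\sigma}$ cannot do this. By the associativity formula, the degree only records primary components at top-dimensional minimal primes, so this identity holds whether or not embedded components are present.

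Initial ideals of toric ideals genuinely can have embedded primes. Take the configuration $\{0,1,3,4\}$ (homogenized) with variables $a,b,c,d$ and degrevlex with $d\prec c\prec b\prec a$. The initial ideal is $J=(bc,ac^2,b^3,c^3)$. Here $\sqrt J=(b,c)$, yet $J:c^2=(a,b,c)$ is an embedded associated prime. So your sentence that the degree count ``would show there are no embedded or extra components'' is false as a general inference.

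\textbf{A fix.} The missing direction is short and needs no multiplicities.
\begin{itemize}
\item Let $F$ be a non-face of $\Delta_P$ and set $\lambda=\sum_{\alpha\in F}(\alpha,1)\in L_P$. Then $\lambda$ lies in the cone over some face $G$ of $\Delta_P$.
\item The vertices of a maximal simplex containing $G$ form a basis of $L_P$. Hence $\lambda=\sum_{\beta\in G}n_\beta(\beta,1)$ with $n_\beta\in\N$.
\item The monomial $x^n=\prod_{\beta\in G}x_\beta^{n_\beta}$ is supported on a face. So it lies outside $\sqrt{\init_\prec(I_P)}=I_{\Delta_P}$ and is therefore standard.
\item The monomial $\prod_{\alpha\in F}x_\alpha$ is different from $x^n$, since its support $F$ is not a face, but it has the same image in $\K[P]$. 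So it cannot also be standard, and it lies in $\init_\prec(I_P)$.
\end{itemize}
This gives $I_{\Delta_P}\subseteq\init_\prec(I_P)\subseteq I_{\Delta_P}$, so the initial ideal is radical.

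Alternatively, compare Hilbert functions. The inclusion $\init_\prec(I_P)\subseteq I_{\Delta_P}$ gives $\HF(S/\init_\prec(I_P),k)\geq\HF(S/I_{\Delta_P},k)$. Unimodularity makes $\HF(S/I_{\Delta_P},k)$ count the degree-$k$ points of the saturation of the semigroup, which gives the reverse inequality.
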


Finally, the~\emph{Hilbert function} of a toric ring $\K[P]$ is the numerical function $\HF_{\K[P]}(i) = \dim_{\K} (\K[P])_i$, where $(\K[P])_i$ denotes the homogeneuos component of $K[P]$ of degree $i$.  The~\emph{Hilbert series} of $\K[P]$ is the generating function 
$$
    \HS_{\K[P]}(t) = \sum_{i = 0}^\infty \HF_{\K[P]}(i) t^i = \frac{h_P(t)}{(1 - t)^{d + 1}},
$$
where $d$ is the dimension of the polytope $P$. We will use in several examples that a lattice polytope $P$ is IDP if and only if $h_P(t)=h_P^\ast(t)$.

\section{Symmetric (co)homology polytopes and their basic properties}\label{s:basic}
In this section, we  study basic properties of symmetric (co)homology polytope. In particular, we provide a description of the faces and facets of symmetric homology polytopes (\Cref{t:facets} and \Cref{c:facettrees}).

\begin{remark}[\emph{Pure vs nonpure simplicial complexes}]\label{r:pure}
     We note that, by definition of $\partial_d$, the polytopes $\P_\Delta$ and $\P^{\Delta}$ only depend on the facet-ridge-incidences of $\Delta$ for $d$-dimensional facets (where $\dim\Delta=d$) but not on possibly lower-dimensional facets. We can hence assume $\Delta$ to have only facets of dimension $d$ or $d-1$. Since the last ones correspond to rows of zeros in the top boundary map $\partial_d$,  the existence of such a facet would imply that $\P_\Delta$ lies in a coordinate hyperplane. Similarly, for $\P^\Delta$, this would mean that the origin appears in the convex hull, which does not affect the polytope either. In the following, we will therefore often assume that $\Delta$ is pure.
\end{remark}

\begin{remark}[\emph{Arbitrary matrices and boundary maps of cell complexes}]
    As pointed out in~\cite[Remark 4.2]{DKM2015}, every integer valued matrix can be realized as the top boundary map of some cell complex. Therefore, theoretically, we could try to translate those of our results that hold for arbitrary matrices to this more topological setting. Unfortunately, most of these results require  some extra conditions, such as the polytope containing the origin as unique interior lattice point, which do not necessarily translate into natural conditions on cell complexes. For this reason, we state topological results only for  the setting of simplicial complexes.
\end{remark}

We now prove several basic properties of symmetric (co)homology polytopes. 
We begin by characterizing their vertices and lattice points.

\begin{lemma}\label{l:latticepoints}
    Let $\Delta$ be a pure $d$-dimensional simplicial complex and let $\P$ be either $\P_\Delta$ or $\P^\Delta$. Then the vertices of $\P$ are the columns of $\partial_d$ (resp. $\partial_d^\top$). 
    In particular, 
    \[
    f_0(\P_\Delta)=2f_d(\Delta) \qquad  \text{and} \qquad f_0(\P^\Delta)=2(f_{d-1}(\Delta) - \sum_{\sigma\in \Delta, \dim \sigma=d}\max(\free(\sigma)-1,0)),
    \]
    where $\free(\sigma)$ denotes the number of free ridges of a facet $\sigma$. 
    Moreover, the only other lattice point of $\P$ is the origin.
\end{lemma}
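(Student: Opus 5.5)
The plan is to exploit one combinatorial fact shared by both matrices: \emph{the supports of two distinct, non-identical columns overlap in at most one coordinate}. For $\partial_d$, the columns are indexed by facets $\sigma$ and supported on the $d+1$ ridges of $\sigma$; two distinct $d$-faces share at most one $(d-1)$-face. For $\partial_d^\top$, the columns are the rows of $\partial_d$, indexed by ridges $\rho$ and supported on the facets containing $\rho$. Two distinct ridges $\rho,\rho'$ cannot both lie in two distinct facets $\sigma,\tau$, since then $\rho=\rho'=\sigma\cap\tau$. So any two rows with different supports overlap in at most one facet. Rows can coincide up to sign only if their support is a single facet $\sigma$ of size one, i.e.\ for free ridges of the same facet, each giving $\pm\be_\sigma$. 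All candidate points are $\{0,\pm1\}$-vectors and $\P\subseteq[-1,1]^N$. Hence every lattice point of $\P$ is a $\{0,\pm1\}$-vector.

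\textbf{Vertices.} Let $\bv$ be a column, with support $S$, of size $|S|=d+1$ in the homology case and $|S|=$ the number of facets containing the ridge in the cohomology case. I use the linear functional $\bx\mapsto \bv^\top\bx$. It equals $|S|$ at $\bv$, and at any $\pm\bw$ with $\bw$ a column having a different support it is at most $1$.
\begin{itemize}
\item If $|S|\ge 2$, then $\bv$ is the unique maximizer over the generating set, hence a vertex.
\item If $|S|=1$ (only possible in the cohomology case, for $\bv=\pm\be_\sigma$ with $\sigma$ having a free ridge), the functional does not separate, so I argue directly. Suppose $\be_\sigma=\sum\lambda_i\bw_i$. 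The $\sigma$-coordinate forces every $\bw_i$ with $\lambda_i>0$ to have entry $+1$ at $\sigma$. For any other facet $\tau$, only the single ridge $\sigma\cap\tau$ can contribute a nonzero $\tau$-entry, and its sign is fixed once the $\sigma$-entry is normalized to $+1$. So all contributions to coordinate $\tau$ have the same sign, and they must vanish. Hence every $\bw_i=\be_\sigma$, and $\be_\sigma$ is a vertex.
\end{itemize}
Counting vertices is then bookkeeping. For $\P_\Delta$, distinct facets give distinct supports and $\bv\ne-\bw$, so $f_0=2f_d(\Delta)$. For $\P^\Delta$, each facet $\sigma$ with $\free(\sigma)\ge 1$ has its free ridges collapse to the single pair $\pm\be_\sigma$, which gives the subtracted term $\max(\free(\sigma)-1,0)$.

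\textbf{Lattice points.} Let $\bx\in\P\cap\Z^N$ be nonzero, so $\bx\in\{0,\pm1\}^N$, and write $\bx=\sum\lambda_i\bw_i$ as a convex combination of vertices. Then
\[
|\supp\bx|=\bx^\top\bx=\sum\lambda_i\,\bx^\top\bw_i ,
\]
and $\bx^\top\bw_i\le|\supp\bx\cap\supp\bw_i|$. If some $\bw_i$ is the column $\bu$ maximizing this overlap, the pairwise-overlap bound says every other $\pm\bw$ meets $\supp\bu$ in at most one coordinate. The aim is to show that $|\supp\bx|\ge2$ forces all weight onto a single vertex $\bu$ whose support contains $\supp\bx$ with agreeing signs, and that equality then forces $\bx=\bu$.

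The main obstacle is this step: the overlap bound applies to pairs of columns, not to the arbitrary point $\bx$. I would first try the functional $\bx$ together with the decomposition, treating separately the case $|\supp\bx|=1$, i.e.\ $\bx=\pm\be_\rho$. For that case, the sign-consistency argument from the vertex step should work. In the homology case, the vertices with $+1$ at $\rho$ come from distinct facets, so their remaining supports are pairwise disjoint and cannot cancel; in the cohomology case the same argument applies verbatim, concluding $\bx$ is itself a vertex. If the general case resists, the fallback is an induction on $|\supp\bx|$ that peels off one coordinate at a time using the same consistency argument.
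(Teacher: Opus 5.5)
Your vertex analysis is sound. The functional $\bx\mapsto\bv^\top\bx$ handles columns whose support has size at least $2$ (the paper remarks on this right after its proof). Your sign-consistency argument correctly shows that $\pm\be_\sigma$ is a vertex, and the vertex counts follow from the pairwise-overlap fact.

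The gap is the final assertion, that the origin is the only other lattice point. You sketch only the case $|\supp\bx|=1$. The case $|\supp\bx|\ge 2$ is left as an explicitly unresolved ``aim'' plus an unspecified fallback induction, so as written this part of the lemma is not proved. You got stuck because you bounded $\bx^\top\bw_i$ by $|\supp\bx\cap\supp\bw_i|$ and then looked for an overlap-maximizing column. The pairwise-overlap fact cannot control that, because it says nothing about how $\bx$ meets a vertex.

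The missing step is a single line, and it is the heart of the paper's proof:
\begin{itemize}
\item Every $\bw_i$ is a $\{0,\pm1\}$-vector, so $\bx^\top\bw_i\le|\supp\bx|$ for each $i$.
\item The $\lambda_i$ are positive and sum to $1$, so your identity $|\supp\bx|=\sum_i\lambda_i\,\bx^\top\bw_i$ forces equality in every term.
\item Hence every vertex occurring in the decomposition agrees with $\bx$ on all of $\supp\bx$.
\end{itemize}
The paper phrases this as: if $\bx_i=1$, then every $\bv_j$ in the combination has $(\bv_j)_i=1$. You already use exactly this forcing at a single coordinate in your $|S|=1$ vertex step; you only need to apply it to every coordinate of $\supp\bx$ at once.

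With this, both cases close quickly.
\begin{itemize}
\item If $|\supp\bx|\ge 2$, any two vertices in the decomposition are nonzero in two common coordinates. By your pairwise-overlap fact they coincide: two distinct facets cannot share two ridges, and two distinct ridges cannot lie in two common facets. So $\bx$ is itself a vertex.
\item If $\bx=\be_k$, every vertex used has $+1$ at $k$. Each other coordinate is then nonzero in at most one of them, so nothing can cancel, and all of them equal $\be_k$. This is a vertex only for $\P^\Delta$ when facet $k$ has a free ridge. It is impossible for $\P_\Delta$, whose vertices have $d+1\ge 2$ nonzero entries.
\end{itemize}
No induction on $|\supp\bx|$ is needed; the case $|\supp\bx|\ge 2$ is in fact the easier one.
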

Before proving this statement, we note that whereas, in $\partial_d$ all columns are different, in $\partial_d^\top$ one and the same column can occur several times. 
\begin{proof}

We first note that since $\partial_d$ is a $\{-1,0,1\}$-matrix, we have $|\textbf{x}_i|\leq 1$ for all $i$ and all $\textbf{x}\in\P$. In particular, $\textbf{x}\in\P$ is a lattice point if and only if $\textbf{x}_i\in\{-1,0,1\}$ for all $i$. 

In the following, let $A_\Delta=[\partial_d|-\partial_d]$ and let $A^\Delta$ be obtained from $[\partial_d^\top|-\partial_d^\top]$ by removing 
repeated columns. Note that $[\partial_d^\top|-\partial_d^\top]$ does not have $0$-columns since $\Delta$ is pure.  
Let $\bw \in \P_\Delta$ (resp. $\bw \in \P^\Delta$) be a lattice point that is different from the origin and from $\bv$ for any column $\bv$ of $A_\Delta$ (resp.  $A^\Delta$). 
Then, there exist columns $\bv_{1},\ldots,\bv_{s}$ of $A_\Delta$ (resp. $A^\Delta$) and $\lambda_1,\ldots,\lambda_s>0$ with $\sum_{\ell=1}^s\lambda_\ell=1$ such that $\bw=\sum_{\ell=1}^s\lambda_\ell \bv_{\ell}$. By assumption, we have $s\geq 2$ and there exists $i$ such that $\bw_i\neq 0$.
Since $\P_\Delta$ and $\P^\Delta$ are centrally symmetric and since $|\bw_i|\leq 1$  by the argument at the beginning,  we can assume that  $\bw_i=1$. 
Moreover, since $|(\bv_j)_i|\leq 1$ for all $1\leq j\leq s$ and $\sum_{\ell=1}^s\lambda_\ell=1$, it follows that $(\bv_{j})_i=1$ for all $1\leq j\leq s$. This, in particular, implies that $\{-\bv,\bv\}\not\subseteq \{\bv_1,\ldots,\bv_s\}$ for any column $\bv$ of $\partial_d$ (resp. $\partial_d^\top$). We show the following claim.

{\sf Claim:} There exist $1\leq j<j'\leq s$ and  $i\neq k$ such that $(\bv_j)_k\neq 0$ and $(\bv_{j'})_k \neq 0$.

 First assume $\bw=\be_i$. Since $\bw\neq \bv_\ell$ for $1\leq \ell\leq s$, there exist $1\leq j\leq s$ and  $k\neq i$ such that $(\bv_j)_k\neq 0$. As $\bw_k=0$, this forces the existence of another $1\leq j'<s$ with $j'\neq j$ such that $(\bv_{j'})_k\neq 0$, which shows the claim. 
 If $\bw\neq \be_i$, then there exists $k\neq i$ such that $\bw_k \neq 0$ and,  by the same argument as in the second paragraph of the proof, we have $(\bv_j)_k\neq 0$ for all $1\leq j\leq s$. This also shows the claim.

The claim together with the fact that $(\bv_{j})_i=(\bv_{j'})_i=1$ (by the discussion above, implies that the facets 
$\sigma_j$, $\sigma_{j'}$ corresponding to the vectors $\bv_{j}$,  $\bv_{j'}$ share two ridges which is impossible, as facets can intersect in at most one ridge. (Note that by the argument above $F_j\neq F_{j'}$.)
For $\P = \P^\Delta$, it follows that the two ridges $r_j$, $r_{j'}$ corresponding to the columns  $\bv_{j}$, $\bv_{j'}$ are contained in two distinct joint facets.
This is impossible since two ridges can be contained in at most one joint facet. 

The same argument also shows that every column of $A_\Delta$ (resp. $A^\Delta$) is a vertex of $\P_\Delta$ (resp. $\P^\Delta$).
Since all columns of $\partial_d$ are distinct, this implies that $f_0(\P_\Delta)=2f_d(\Delta)$. 

Columns of $\partial_d^\top$ are the same if and only if the corresponding ridges are contained in the same facets.
Since two ridges already determine a facet completely, this can only happen if the corresponding ridges are contained in exactly one facet.
Hence, the ridges are free ridges of the same facet. 
Consequently, for every $d$-dimensional facet $\sigma$ with $m$ free ridges, $\partial_d^\top$ contains  $m$ identical columns all corresponding to the same vertex of $\P^\Delta$ . The claim follows. 
\end{proof}

It is not hard to see that for a column $\bv$ of $\partial_d$ a supporting hyperplane is provided by 
 the linear functional $\ell_\bv$ mapping $\bx$ to $\bv^\top \bx$. Indeed, $\ell_\bv(\bv)=d+1$ and, since every column has exactly $d+1$ entries equal to $1$ or $-1$ (and all other entries equal to $0$), $\ell_\bv(\bw)< d+1$ for every other column $\bw$ of $[\partial_d|-\partial_d]$.
 Moreover, the corresponding linear functional defines a supporting hyperplane for columns of $\partial_d^\top$ that do not correspond to  free ridges.
 
\begin{proposition}\label{p:basic}
    Let $d>0$ and let $\Delta$ be a $d$-dimensional simplicial complex. Then:
    \begin{enumerate}
        \item $\dim \P_\Delta = \dim \P^\Delta = f_{d}(\Delta) - \dim_\Q H_d(\Delta; \Q)$.
        \item The affine hull of $\P_\Delta$ is the orthogonal complement of the linear space spanned by the $d$-cocycles of $\Delta$ in $\R^{f_{d-1}(\Delta)}$. 
        \item The affine hull of $\P^\Delta$ is the orthogonal complement of the linear space spanned by the $d$-cycles of $\Delta$ in $\R^{f_{d}(\Delta)}$.
    \end{enumerate}
\end{proposition}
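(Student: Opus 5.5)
The plan is to use that a centrally symmetric polytope $\P_A=\conv[A|-A]$ contains the origin in its relative interior. Its affine hull is therefore a linear subspace, namely the column space of $A$. Indeed, $\mathbf{0}=\tfrac12\bv+\tfrac12(-\bv)$ lies in $\P_A$, so $\aff(\P_A)$ is a linear space containing every column $\pm\bv_i$. Conversely, every vector of $\aff(\P_A)$ is an affine combination of the columns $\pm\bv_i$ and hence lies in their linear span. Thus $\aff(\P_A)=\im_\R A$ and $\dim \P_A=\rank A$. This reduces all three items to linear algebra on $\partial_d$ over $\R$; since $\partial_d$ has rational entries, ranks over $\R$ and over $\Q$ agree.

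For (2), apply this with $A=\partial_d$. We get $\aff(\P_\Delta)=\im_\R\partial_d$, and the standard identity $\im_\R\partial_d=(\ker_\R\partial_d^\top)^\perp$ in $\R^{f_{d-1}(\Delta)}$ finishes the argument. The $d$-cocycles are exactly the elements of $\ker\partial_d^\top$, since there is no $\partial_{d+1}$ to mod out by.

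For (3), apply it with $A=\partial_d^\top$. We get $\aff(\P^\Delta)=\im_\R\partial_d^\top=(\ker_\R\partial_d)^\perp$ in $\R^{f_d(\Delta)}$, and $\ker\partial_d$ is the space of $d$-cycles. One might worry that Lemma~\ref{l:latticepoints} removes repeated columns or that $\Delta$ is not pure. Neither changes the column span: duplicates add nothing, and zero rows of $\partial_d$ (lower-dimensional facets, cf. Remark~\ref{r:pure}) give the zero column, which is already in the span.

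For (1), both dimensions equal $\rank\partial_d=\rank\partial_d^\top$. By rank–nullity on $\partial_d\colon\Q^{f_d}\to\Q^{f_{d-1}}$ we have $\rank\partial_d=f_d(\Delta)-\dim_\Q\ker_\Q\partial_d$. Because $\im\partial_{d+1}=0$, this kernel is exactly $H_d(\Delta;\Q)$. The only point needing care is the claim that the origin makes the affine hull linear; the hypothesis $d>0$ just ensures that $\partial_d$ is a genuine boundary matrix with nonzero columns. I expect no real obstacle beyond stating the affine-versus-linear-span argument cleanly.
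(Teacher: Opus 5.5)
Your proposal is correct and follows essentially the paper's route. Because $\mathbf{0}\in\P_A$, the affine hull equals the column span $\im_\R A$, and everything then reduces to rank--nullity together with $\im_\R\partial_d=(\ker_\R\partial_d^\top)^\perp$ and its transposed version. The only difference is cosmetic: you prove $\aff(\P_A)=\im_\R A$ directly, whereas the paper obtains it from the containment $\P_\Delta\subseteq\im_\R\partial_d$ combined with the dimension count in (1).
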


\begin{proof}
We first prove (1). 
 Since both, $\P_\Delta$ and $\P^\Delta$, contain the origin, we have
    $$
    \dim \P_\Delta = \rank [\partial_d | - \partial_d] = \rank \partial_d = \rank \partial_d^\top = \rank [\partial_d^\top | - \partial_d^\top] = \dim \P^\Delta.
    $$
    Since $\dim\Delta=d$, we know that $H_d(\Delta; \Q) = \ker_\Q \partial_d$ (as a $\Q$-vector space), which implies $\rank \partial_d = f_d(\Delta) - \dim_\Q H_d(\Delta; \Q)$ and thus the claim.

To prove (2), we interpret $\partial_d$ as a map from $\R^{f_d(\Delta)}$ to $\R^{f_{d-1}(\Delta)}$. With this, we have  $\P_\Delta\subseteq \im_\R\partial_d$ and $\dim \P_\Delta = f_d(\Delta)-\dim \ker_\R\partial_d= \dim \im_\R\partial_d$ by (1). It follows that the affine hull of $\P_\Delta$ equals $\im_\R\partial_d$. Moreover, by standard facts from linear algebra, the latter is orthogonal to $\ker_\R \partial_d^\top$, which is the exactly the space of $d$-cocycles of $\Delta$. The claim follows. 

Finally, (3) can be shown by the same arguments as (2).
\end{proof}

\begin{remark}[\emph{Symmetric homology polytopes of graphs and SEPs}]
    For a connected graph $G$ on $n$ vertices and $s$ edges \cref{p:basic} says that $\dim \P_G = s- \dim_\Q H_1(G;\Q)$. On the other hand, since $\P_G$ is the symmetric edge polytope of $G$ in this case, it is well-known and easy to see that  $\dim \P_G=n-1$ and hence, $s- \dim_\Q H_1(G;\Q)=n-1$, an identity which can also easily be seen graph theoretically.
    Moreover, by construction, $\P_G$ is contained in the hyperplane normal to $(1,\dots,1)$ which generates the space of $1$-cocycles of $G$. 
    \end{remark}


\subsection{Facets}
The goal of this section is to describe faces  of the symmetric homology polytope $\P_\Delta$. In particular, we will  provide a natural generalization of the facet description of the symmetric edge polytope of a graph \cite[Theorem 3.1]{HJM2019}. We start with a description of general faces.

\begin{theorem}\label{t:facets}
    Let $\Delta=\langle \sigma_1,\ldots,\sigma_s\rangle$ be a pure $d$-dimensional simplicial complex with $r$ ridges. Let $F_i$ denote the $i$\textsuperscript{th} column of $\partial_d$. 
    Faces of $\P_\Delta$ correspond to labelings $\mathbf{\ell}\in [-1,1]^s$ of the  facets of $\Delta$ such that $\sum_{j = 1}^s \ell_j \sigma_j \in \im_\R \partial_d^\top$ and $\max(|\ell_j|~:~ 1\leq j\leq  s) = 1$. 
    In particular, the  face corresponding to the labeling $\mathbf{\ell}$ is given by $\conv\{ \ell_i F_i~:~ \vert \ell_i\vert = 1\}$. Moreover, $\{\bx\in \R^r~:~\bw^\top \bx= 1\}$, where $\bw\in \R^r$ is such that $\partial_d^\top \bw=\sum_{j = 1}^s \ell_j \sigma_j$, is a  supporting hyperplane for this face, containing $\P_\Delta$ in its negative halfspace.
    \end{theorem}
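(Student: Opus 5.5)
The plan is to use the standard duality between faces of a polytope and linear functionals, after one observation. For $\bw\in\R^r$, the values of the functional $\bx\mapsto\bw^\top\bx$ on the vertices $\pm F_j$ of $\P_\Delta$ (by \Cref{l:latticepoints}) are $\pm\bw^\top F_j=\pm(\partial_d^\top\bw)_j$. So the vector $\ell\coloneqq\partial_d^\top\bw\in\im_\R\partial_d^\top$ records exactly how $\bw$ evaluates on the vertices. Identifying $C_d(\Delta;\R)$ with $\R^s$ via the basis $\sigma_1,\ldots,\sigma_s$, we have $\ell=\sum_j\ell_j\sigma_j$. Every nonempty face other than $\P_\Delta$ itself is cut out by some supporting hyperplane $\{\bw^\top\bx=c\}$. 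Since $\P_\Delta$ is centrally symmetric and contains the origin, we may take $c>0$ for proper faces, and after rescaling $c=1$.

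The first step is the direction from faces to labelings. Let $F$ be a proper face with supporting hyperplane $\{\bw^\top\bx=1\}$ and $\P_\Delta\subseteq\{\bw^\top\bx\le1\}$. Put $\ell=\partial_d^\top\bw$. Then $\ell_j=\bw^\top F_j\le 1$ and $-\ell_j=\bw^\top(-F_j)\le1$, so $\ell\in[-1,1]^s$. Some vertex lies on the hyperplane, so $\max_j|\ell_j|=1$. Since $F$ is the convex hull of the vertices lying on the hyperplane, and $\pm F_j$ lies on it exactly when $\pm\ell_j=1$, we get $F=\conv\{\ell_iF_i : |\ell_i|=1\}$. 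Note that $\ell_iF_i$ is the relevant vertex $\pm F_i$ in this case.

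The second step is the converse. Given a labeling $\ell\in[-1,1]^s\cap\im_\R\partial_d^\top$ with $\max|\ell_j|=1$, choose any $\bw$ with $\partial_d^\top\bw=\ell$. The same computation shows that every vertex $\pm F_j$ satisfies $\bw^\top(\pm F_j)=\pm\ell_j\le1$. By convexity, $\P_\Delta$ lies in $\{\bw^\top\bx\le1\}$. Equality holds for at least one vertex, so the hyperplane is supporting, and the face it defines is $\conv\{\ell_iF_i:|\ell_i|=1\}$. This also proves the supporting-hyperplane statement in the theorem.

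The main subtlety is the phrase ``correspond to'', because the map from labelings to faces is not injective. Different labelings can define the same face; for the facet description one later restricts to labelings where the face determines $\ell$. I will state the correspondence as a surjection from admissible labelings onto the nonempty proper faces, where equal faces arise from labelings with the same set $\{i:|\ell_i|=1\}$ and the same signs on it. I also need to check that $\bw$ is well defined only up to $\ker\partial_d^\top$, the $d$-cocycles. By \Cref{p:basic}(2) this space is orthogonal to the affine hull of $\P_\Delta$, so changing $\bw$ by a cocycle does not change the hyperplane's intersection with $\operatorname{aff}\P_\Delta$. Hence the face depends only on $\ell$.
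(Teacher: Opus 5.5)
Your proposal is correct and follows essentially the same route as the paper. You read off $\bw^\top(\pm F_j)=\pm(\partial_d^\top\bw)_j$ and use convexity for the direction from labelings to faces. For the converse you normalize a supporting hyperplane to right-hand side $1$ because the origin lies in $\P_\Delta$. Your central-symmetry argument for $c>0$ is a slightly more careful version of the paper's appeal to the origin being an interior point, which holds only relative to $\aff(\P_\Delta)$. Your extra remarks on the non-injectivity of the correspondence, and on $\bw$ being determined only modulo $d$-cocycles, are also correct; the paper does not spell these out.
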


We note that the condition $\sum_{j = 1}^s \ell_j \sigma_j \in \im_\R \partial_d^\top$ on a labeling $\mathbf{\ell}$ just says that $\mathbf{\ell}$ is a coboundary. In other words, $\mathbf{\ell}$ vanishes on homology cycles.

\begin{proof}
    Let $\mathbf{\ell} \in [-1, 1]^s$ be a labeling such that $\sum_{j = 1}^s \ell_j \sigma_j \in \im_\R \partial_d^\top$ and choose $\bw \in \R^r$ such that $\partial_d^\top \bw = \sum_{j = 1}^s \ell_j \sigma_j$. For a column $F_j = \partial_d \sigma_j$ (for $-F_j = -\partial_d \sigma_j$ the same argument works) we have:
    \begin{equation}\label{eq:faces}
        \bw^\top (F_j) = \bw^\top (\partial_d \sigma_j) = (\partial_d^\top \bw)^\top \sigma_j = \left(\sum_{j = 1}^s \ell_j \sigma_j\right)^\top 
        \sigma_j = \ell_j \in [-1, 1].
    \end{equation}
     By convexity, we conclude $\P_\Delta \subseteq \{\bx \in \R^r\st \bw^\top \bx \leq 1\}$. This shows that $\P_\Delta\cap \{\bx\in \R^r~:~\bw^\top \bx=1\}$ is a face of $\P_\Delta$. Moreover, the above computation shows that the set of vertices of this face equals $\{\ell_iF_i~:~|\ell_i|=1\}$. 

    To see why each face is of the described form, let  $\Omega$ be a face of $\P_\Delta$. Since the origin $\mathbf{0}$ lies in the interior of $\P_\Delta$, it follows that $\mathbf{0}\notin \Omega$  and hence there exists
     $\bw\in \R^r$  such that $\Omega = \P_\Delta \cap \{\bx \in \R^r \st \bw^\top \bx = 1\}$ and $\P_\Delta \subseteq \{\bx \in \R^r \st \bw^\top \bx \leq 1\}$. 
     Let  $\partial_d^\top \bw = \sum_{j = 1}^s \ell_j \sigma_j$. By the choice of $\bw$ and~\eqref{eq:faces} we know that $|\ell_j| = |\bw^\top (F_j)| \leq 1$ and thus $\ell_j\in [-1,1]$.
    Setting $\mathbf{\ell} = (\ell_1, \dots, \ell_s)$ yields the desired result. 
    \end{proof}

    We want to remark that \cref{t:facets} can be seen as a version of centrally symmetric Gale duality \cite[Equations (9) and (10)]{MS1968}.
    In particular, it can also be stated for general matrices. 

Facets of symmetric edge polytopes of connected graphs are known to correspond to connected subgraphs that contain a spanning tree.(see~\cite[Theorem 3.1]{HJM2019}). 

For higher dimensional simplicial complexes, more than one notion of a \emph{simplicial spanning tree} exists. It turns out that the right one for our purposes is the following generalization of the one used in~\cite{DKM2009} to generalize the Matrix Tree theorem to higher dimensional simplicial complexes.

\begin{definition}
    Let $d \geq 1$ and let $\Delta$ be a $d$-dimensional simplicial complex. A  subcomplex $\Gamma$ of $\Delta$  is called \emph{simplicial spanning forest} for $\Delta$ if it contains all $(d - 1)$-faces of $\Delta$ and if it satisfies the following conditions: 
    \begin{itemize}
    \item[(1)] $H_d(\Gamma;\Z)=0$, and
    \item[(2)] $f_d(\Gamma)=f_d(\Delta)-\dim_\Q H_d(\Delta;\Q)$.
\end{itemize}
\end{definition}
 We want to remark, that a simplicial spanning forest is a simplicial spanning tree in the sense of \cite{DKM2009} if, additionally, $\dim_\Q H_{d - 1}(\Gamma; \Q) = \dim_\Q H_{d - 1}(\Delta; \Q) = 0$.
  
While a $d$-dimensional simplicial complex admits a simplicial spanning tree if and only if $H_k(\Delta; \Q) = 0$ for $0 < k < d$ and $H_0(\Delta; \Q) = \Q$ (see  \cite[Proposition 3.7]{DKM2009}), it is easy to see that every simplicial complex $\Delta$ admits a simplicial spannning forest: Indeed, any subcomplex of $\Delta$ whose facets correspond to a maximal set of linearly independent columns of the top boundary map $\partial_d$ is a simplicial spanning forest. We can then prove the following.


\begin{corollary}[Facets of $\P_\Delta$ and spanning forests of $\Delta$]\label{c:facettrees}
Let $\Delta = \tuple{\sigma_1, \dots, \sigma_s}$ be a pure $d$-dimensional simplicial complex. Then the facets of $\P_\Delta$ correspond to labelings $\mathbf{\ell} \in [-1,1]^s$ of the facets of $\Delta$ such that $\sum_{j=1}^s \ell_j\sigma_j\in \im_\R \partial_d^\top$ and $\Gamma_{\mathbf{\ell}} = \tuple{\sigma_i \st |\ell_i| = 1}$ contains a simplicial spanning forest of $\Delta$.
\end{corollary}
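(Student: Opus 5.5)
By \Cref{t:facets}, every face of $\P_\Delta$ has the form $\Omega_{\mathbf{\ell}}=\conv\{\ell_iF_i \st |\ell_i|=1\}$ for a labeling $\mathbf{\ell}\in[-1,1]^s$ with $\sum_j\ell_j\sigma_j\in\im_\R\partial_d^\top$ and $\max_j|\ell_j|=1$. Conversely, every such labeling gives a face. So it suffices to show the following: for such a labeling, $\Omega_{\mathbf{\ell}}$ is a facet, i.e.\ $\dim\Omega_{\mathbf{\ell}}=\dim\P_\Delta-1$, if and only if $\Gamma_{\mathbf{\ell}}$ contains a simplicial spanning forest of $\Delta$. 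Here I read $\Gamma_{\mathbf{\ell}}$ as the subcomplex generated by the facets $\sigma_i$ with $|\ell_i|=1$ together with all $(d-1)$-faces of $\Delta$. This convention only adds lower-dimensional faces and does not change any top homology. Faces and facets are understood relative to the affine hull of $\P_\Delta$, as in the proof of \Cref{t:facets}.

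First I compute $\dim\Omega_{\mathbf{\ell}}$ linearly. Let $I=\{i \st |\ell_i|=1\}$. The face lies in the hyperplane $\{\bw^\top\bx=1\}$ given by \Cref{t:facets}, which misses the origin. Hence the affine hull of the points $\ell_iF_i$, $i\in I$, does not contain $\mathbf{0}$, and
\[
\dim\Omega_{\mathbf{\ell}}=\rank\{\ell_iF_i \st i\in I\}-1=\rank\{F_i \st i\in I\}-1 .
\]
By \Cref{p:basic}(1), $\dim\P_\Delta=\rank\partial_d=f_d(\Delta)-\dim_\Q H_d(\Delta;\Q)$. Therefore $\Omega_{\mathbf{\ell}}$ is a facet if and only if the columns of $\partial_d$ indexed by $I$ have full rank $\rank\partial_d$.

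Next I translate this rank condition into spanning forests. I use the matroid dictionary from the preliminaries: a set $J$ of columns of $\partial_d$ is linearly independent over $\Q$ if and only if the pure subcomplex $\Gamma_J$ spanned by $\{\sigma_j \st j\in J\}$ (plus all ridges) satisfies $H_d(\Gamma_J;\Q)=\ker_\Q\partial_d|_J=0$. Since $H_d(\Gamma_J;\Z)$ is free, this is equivalent to $H_d(\Gamma_J;\Z)=0$.

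Suppose the columns indexed by $I$ have rank $\rank\partial_d$. Choose a basis $J\subseteq I$ of these columns. Then $\Gamma_J$ satisfies condition (1) of the definition of a simplicial spanning forest, and it satisfies (2) because $|J|=\rank\partial_d=f_d(\Delta)-\dim_\Q H_d(\Delta;\Q)$. So $\Gamma_J\subseteq\Gamma_{\mathbf{\ell}}$ is a spanning forest.

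Conversely, suppose $\Gamma\subseteq\Gamma_{\mathbf{\ell}}$ is a spanning forest with facet set $J\subseteq I$. Then the columns indexed by $J$ are independent by (1), and there are $\rank\partial_d$ of them by (2). Hence the columns indexed by $I$ have full rank.

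The only delicate step is the first one: checking that the affine dimension of the face equals the linear rank minus one. This relies on $\mathbf{0}\notin\aff\Omega_{\mathbf{\ell}}$, which is exactly what the supporting hyperplane $\bw^\top\bx=1$ from \Cref{t:facets} provides. Everything else is bookkeeping with \Cref{p:basic} and the definition of a simplicial spanning forest.
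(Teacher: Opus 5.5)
Your argument is the paper's: you reduce to \Cref{t:facets}, use that the hyperplane $\bw^\top\bx=1$ misses the origin to get $\dim\Omega_{\mathbf{\ell}}=\rank\{F_i : i\in I\}-1$, compare with $\rank\partial_d$ via \Cref{p:basic}, choose a basis $J\subseteq I$ for one direction, and use the rank sandwich for the other. The one step that does not hold up as written is your convention of adjoining all $(d-1)$-faces of $\Delta$ to $\Gamma_{\mathbf{\ell}}$.

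In the statement, $\Gamma_{\mathbf{\ell}}=\tuple{\sigma_i \st |\ell_i|=1}$, and a simplicial spanning forest must contain every $(d-1)$-face of $\Delta$. So ``$\Gamma_{\mathbf{\ell}}$ contains a spanning forest'' includes the requirement that every ridge of $\Delta$ lies in some $\sigma_i$ with $|\ell_i|=1$. Your remark that the convention does not change top homology is true but does not address this. The convention makes the ridge-covering clause vacuous, so in the direction ``facet $\Rightarrow$ spanning forest'' you prove only a weaker conclusion.

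The fix is one line, and it is the step the paper makes when it says that $\Sigma_B$ and $\Gamma_{\mathbf{\ell}}$ contain all ridges. Suppose a ridge $\tau$ were contained in no $\sigma_j$ with $j\in J$. Then all $F_j$, $j\in J$, lie in the coordinate hyperplane $\{x_\tau=0\}$. By purity, some facet $\sigma_k\supseteq\tau$ gives a column with $(F_k)_\tau=\pm1$. Hence $\operatorname{span}\{F_j : j\in J\}\subsetneq\im_\R\partial_d$, which contradicts $J$ being an independent set with $|J|=\rank\partial_d$.

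With this added, and with $\Gamma_J$ taken as $\tuple{\sigma_j \st j\in J}$ without extra ridges, the proof is complete. The converse direction needs no change, since it uses only conditions (1) and (2).
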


\begin{proof}

Let $\mathbf{\ell} \in [-1,1]^s$ be a labeling of the facets of $\Delta$ such that $\sum_{j=1}^s \ell_j\sigma_j\in \im_\R \partial_d^\top$. From \Cref{t:facets}, it follows that $\mathbf{\ell}$ gives rise to a face  $\Omega = \conv\{\ell_i F_i \st |\ell_i| = 1\}$ of $\P_\Delta$, where $F_i$ is the $i$\textsuperscript{th} column of the top boundary map $\partial$ of $\Delta$. Moreover, every face is obtained this way. It hence suffices to show that $\Omega$ is a facet of $\P_\Delta$ if any only if  $\Gamma_\ell  = \tuple{\sigma_i \st |\ell_i| = 1}$ contains a simplicial spanning forest.

First, assume that $\Omega$ is a facet of $\P_\Delta$. Let $\partial_\ell$ be the top boundary map of $\Gamma_\ell$, and let $B$ be a maximal independent set of columns of $\partial_\ell$. Moreover, let $\Sigma_B = \tuple{\sigma_k \st F_k \in B}$ be the corresponding simplicial complex, and let $\partial_B$ be the top boundary map of $\Sigma_B$.  Since $\dim\Omega=\dim \P_\Delta-1$, it follows that $\partial_d$, $\partial_\ell$ and $\partial_B$ have the same rank. This implies that both, $\Sigma_B$ and $\Gamma_\ell$, contain all  ridges of $\Delta$. Using that the columns of $\partial_B$ are linearly independent, we conclude that $H_d(\Sigma_B; \Z) = 0$ and that
$$
    f_d(\Sigma_B) = \rank \partial_B = \rank \partial_d = f_d(\Delta) - \dim_\Q H_d(\Delta; \Q).
$$
This shows that $\Sigma_B$ is a  simplicial spanning forest of $\Delta$. Since $\Sigma_B$ is contained in $\Gamma_\ell$, the claim follows. 

Conversely, assume that $\Gamma_\ell = \langle \sigma_i \st |\ell_i| = 1\langle$  contains a simplicial spanning forest $\Sigma$ of $\Delta$. Let $\partial_\Sigma$ be the top boundary map of $\Sigma$ and $\partial_\ell$ be the top boundary map of $\Gamma_\ell$. By the definition of a simplicial spanning forest, we obtain
$$
    \rank \partial_d = \rank \partial_\Sigma \leq \rank \partial_\ell \leq \rank \partial_d
$$
which implies $\dim\Omega=\rank\partial_\ell-1=\rank\partial_d-1=\dim\P_\Delta-1$, i.e., $    \Omega$ is a facet of $\P_\Delta$.
%
%
%
%
\end{proof}



We end this section with an application of \Cref{c:facettrees} to a specific class of simplicial complexes that can be seen as a generalizations of cycles in the case of graphs. 

 Before providing this application, we need to introduce some further notation. Given a multiset $S$  with elements in $\R$, a pair of multisets $(M_1,M_2)$ is called an \emph{equal weight partition} of $S$, if  $M_1, M_2 \subseteq S$, $M_1\cup M_2=S$ (as  multisets), and $\sum_{x\in M_1} x = \sum_{y\in M_2} y$. We denote by $\partition(S)$ the number of equal weight partitions of $S$. 
To simplify notation, we will refer to vectors in $\ker_\Z A$ as \emph{linear dependencies} (for $A$).

 
    
    \begin{corollary}\label{c:cycle_facets}
        Let $\Delta$  be a pure $d$-dimensional simplicial complex with $f_d(\Delta)=s$, $H_d(\Delta;\Z)=\Z$ and  $H_d(\Delta\setminus \{\sigma\};\Z)=0$ for every facet $\sigma$ of $\Delta$. 
        Let $\mathbf{a}\in\Z^s$ be a linear dependency that generates $H_d(\Delta;\Z)$ and let $S=\{a_1,a_2,\dots,a_s\}$. If $\mathbf{\ell}$ is the labeling corresponding to a facet $\Omega$ of $\P_\Delta$, then one of the following conditions holds:
        \begin{itemize}
            \item[(1)] $(\{a_i~:~\ell_i=1\},\{a_i~:~\ell_i=-1\})$ defines an equal weight partition of $S$;
            \item[(2)] There exists $1\leq k\leq s$ such that $\ell_k=0$ and
            $(\{a_i~:~\ell_i=1\},\{a_i~:~\ell_i=-1\})$ defines an equal weight partition of $S\setminus\{a_k\}$.
            \item[(3)] There exists $1 \leq k \leq s$ such that $\ell_k \not \in \{-1, 0, 1\}$ and either $(\{a_i~:~\ell_i=1\}\cup\{\ell_ka_k\},\{a_i~:~\ell_i=-1\})$ or  $(\{a_i~:~\ell_i=1\},\{a_i~:~\ell_i=-1\}\cup\{\ell_ka_k\})$ defines an equal weight partition of $S\setminus\{a_k\}\cup\{\ell_ka_k\}$.
            
        \end{itemize}
        Moreover, every equal weight partition as in (1), (2) or (3) defines a facet of $\P_\Delta$. In particular, the number of facets of $\P_\Delta$ is equal to 
    $$
         \partition(S) + \sum_{k=1}^s \partition(S\setminus \{a_k\}) +\sum_{k=1}^s\sum_{i=1}^{|a_k|-1} \partition(S\cup \{i\}\setminus \{a_k\}). 
    $$ 
    \end{corollary}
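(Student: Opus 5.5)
We plan to combine \Cref{t:facets} with \Cref{c:facettrees}, using the hypotheses to make both the coboundary condition and the spanning forest condition explicit. Since $H_d(\Delta;\Z)=\Z$ is generated by $\ba$, we have $\ker_\R\partial_d=\R\ba$. Hence $\im_\R\partial_d^\top=(\ker_\R\partial_d)^\perp$ consists of the labelings with $\sum_i \ell_i a_i=0$. The hypothesis $H_d(\Delta\setminus\{\sigma\};\Z)=0$ for every facet $\sigma$ forces every $a_i$ to be nonzero, since otherwise $\ba$ would be a cycle of $\Delta\setminus\{\sigma_i\}$. It also shows that $\dim_\Q H_d(\Delta;\Q)=1$. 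Consequently the simplicial spanning forests of $\Delta$ are exactly the complexes $\Delta\setminus\{\sigma_k\}$: such a complex has $s-1$ facets and vanishing top homology, and it contains all ridges because $\ba$ has full support, so every ridge of $\sigma_k$ lies in another facet.

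By \Cref{c:facettrees}, facets of $\P_\Delta$ therefore correspond to labelings $\mathbf{\ell}\in[-1,1]^s$ with $\sum_i\ell_ia_i=0$ and $|\ell_i|=1$ for all but at most one index. We split according to the exceptional index. If no index is exceptional, the condition $\sum_{\ell_i=1}a_i=\sum_{\ell_i=-1}a_i$ is exactly case (1). If $\ell_k=0$, the same condition on $S\setminus\{a_k\}$ gives case (2). If $\ell_k\notin\{-1,0,1\}$, we move $\ell_ka_k$ to the side matching its sign, which gives case (3).

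For the converse, every equal weight partition of type (1), (2) or (3) defines a labeling $\mathbf{\ell}$ that is a coboundary and whose set $\Gamma_{\mathbf{\ell}}$ contains some $\Delta\setminus\{\sigma_k\}$. By \Cref{c:facettrees} this labeling gives a facet.

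The step needing most care is the count, since we must ensure the map from labelings to facets is a bijection. Distinct labelings give distinct faces: if two labelings gave the same facet, their difference would annihilate the vertices of that facet, which affinely span a hyperplane not through the origin, and hence would vanish on $\im\partial_d$. Lying in $\im\partial_d^\top$, the difference would then be zero. The count $\partition(S)$ handles case (1) directly, and $\sum_k\partition(S\setminus\{a_k\})$ handles case (2).

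Case (3) is the delicate one. Given $\ell_k$ strictly between $-1$ and $1$ and nonzero, the equation $\sum_{i\ne k}\pm a_i+\ell_ka_k=0$ forces $\ell_ka_k$ to be an integer of absolute value less than $|a_k|$. We encode it as an element $i\in\{1,\dots,|a_k|-1\}$ placed on the appropriate side, and flipping the side accounts for the sign. We will check that this encoding is bijective with the partitions counted by $\partition(S\cup\{i\}\setminus\{a_k\})$. This sign and side bookkeeping, together with avoiding double counting between the cases, is where we expect the main obstacle to lie.
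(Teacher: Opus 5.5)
Your proposal is correct and follows the same route as the paper: both use \Cref{t:facets} and \Cref{c:facettrees} to reduce facets to labelings with $\sum_i \ell_i a_i = 0$ and at most one label outside $\{\pm 1\}$, and then split according to the exceptional index. You also justify steps the paper calls immediate: the full support of $\ba$, which ensures $\langle \sigma_i : i \neq k\rangle$ contains every ridge; the uniqueness of the labeling attached to a facet; and the encoding of case (3) by $|\ell_k a_k| \in \{1,\dots,|a_k|-1\}$. Your count is right provided $\partition$ is read, as in \Cref{c:specialcycles}, with the entries of $S$ distinguished by their index.
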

We note that the conditions on the homology of $\Delta$ imply that  the kernel of $\partial_d$ is spanned by a  unique linear dependency $\mathbf{a}$ involving \emph{all} facets of $\Delta$.
 Examples of such simplicial complexes are triangulations of spheres but also the simplicial complex $\B$ from~\cref{ex:bjorner}.  

    \begin{proof}
    Let $\sigma_1,\ldots,\sigma_s$ be the facets of $\Delta$, and let $F_1,\ldots,F_s$ denote the corresponding column of $\partial_d$. 
    Let $\Omega$ be a facet of $\P_\Delta$, and let $\mathbf{\ell}\in[-1,1]^s$ be the corresponding labeling. \Cref{p:basic} (1) implies that $\dim\Omega=s-2$. In particular, by~\cref{c:facettrees}, $\Omega$ has to contain exactly one of $F_i$ and $-F_i$ for all $1\leq i\leq s$, or all $1\leq i\leq s$ except one particular   $1\leq k\leq s$.  
    Also note that the set of vertices of $\Omega$ is equal to $\{\ell_iF_i \st |\ell_i| = 1\}$.
    
    Since  $\sum_{j=1}^s \ell_j\sigma_j\in \im_\R \partial_d^\top$ and $\mathbf{a}\in \ker_\R \partial_d$, we conclude 
\begin{equation} \label{eq:proofcyclefacets}
        0=\sum_{i=1}^s a_i \ell_i = \begin{cases}
         \sum_{\{i~:~\ell_i=+1\}} a_i - \sum_{\{i~:~\ell_i= -1\}} a_i,\quad &\text{ if } |\ell_i|=1\text{ for all } i\\
            \sum_{\{i~:~\ell_i=+1\}} a_i - \sum_{\{i~:~\ell_i= -1\}} a_i + \ell_k a_k,\quad  &\text{ if } |\ell_k|\neq 1 \text{ and } |\ell_i|=1 \text{ for } i\neq k.
            
        \end{cases}
\end{equation}
    In the first case, the labeling $\mathbf{\ell}$  corresponds to an equal weight partition of $S$ by definition, which is condition (1). In the second case, we first observe that it follows that $\ell_ka_k\in [-a_k,a_k]\cap \Z$. If $\ell_k=0$, then $\mathbf{\ell}$ gives rise to an equal weight partition of $S\setminus \{a_k\}$, which is condition (2). 
    Similarly, if $\ell_k\notin \{-1,0,1\}$, we have an equal weight partition of $S\setminus \{a_k\} \cup \{ \ell_k a_k \}$, which is condition (3).
    For the \emph{Moreover}-statement one easily verifies that every equal weight partition as in (1), (2) or (3) gives rise to a labeling $\mathbf{\ell}\in[-1,1]^s$ with $\sum_{j = 1}^s \ell_j \sigma_j \in \im_\R \partial^\top$ and such that  $\Gamma_\ell \coloneqq \tuple{\sigma_i \st |\ell_i| = 1}$ contains a simplicial spanning forest of $\Delta$. 

    The counting formula is immediate since the first, second and third summand above equals the number of equal weight partitions of type
    (1), (2) and (3), respectively.
    \end{proof}

As a special case of~\cref{c:cycle_facets}, we obtain a simple formula for the number of facets of $\P_\Delta$, if $\Delta$ is a connected closed orientable $d$-pseudomanifolds.
In this situation, there exists a linear dependency $\mathbf{a}\in \{-1,1\}^{f_d(\Delta)}$ that generates $H_d(\Delta;\Z)$.

\begin{corollary}\label{c:specialcycles}
    Let $\Delta$ be a connected closed orientable $d$-pseudomanifold with $s$ facets. Then the number of facets of $\P_\Delta$ equals ${2m \choose m}$, if $s=2m$, and it equals $(2m+1)\cdot{2m \choose m}$ if $s=2m+1$ (where $m\in \mathbb{N}$).
       
\end{corollary}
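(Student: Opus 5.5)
We apply \Cref{c:cycle_facets}. A connected closed orientable $d$-pseudomanifold satisfies its hypotheses. First, $H_d(\Delta;\Z)=\Z$ by orientability. Second, $H_d(\Delta\setminus\{\sigma\};\Z)=0$ for every facet $\sigma$: a nonzero $d$-cycle supported away from $\sigma$ would, since each ridge lies in exactly two facets, have to be constant in absolute value along the facet-ridge graph. Strong connectivity then forces its support to be all facets, which is a contradiction. The same argument shows the generator $\mathbf{a}$ can be chosen in $\{-1,1\}^s$. Hence the multiset is $S=\{a_1,\dots,a_s\}$ with $a_i=\pm1$, and the third summand of the counting formula vanishes, since $|a_k|-1=0$.

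It remains to count $\partition(S)+\sum_{k=1}^s\partition(S\setminus\{a_k\})$. Here I would be careful about what an equal weight partition means for a signed multiset. In the proof of \Cref{c:cycle_facets}, the condition is $\sum_{\ell_i=1}a_i=\sum_{\ell_i=-1}a_i$, and facets correspond to labelings $\ell$, i.e.\ to assignments of $\pm1$ to the indices. So we are really counting sign vectors $\epsilon\in\{\pm1\}^s$ (or on $s-1$ indices) with $\sum\epsilon_ia_i=0$. Substituting $\epsilon_i'=\epsilon_ia_i$ is a bijection on sign vectors, so the count equals the number of $\pm1$-vectors of length $n$ with zero sum. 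This number is $\binom{n}{n/2}$ if $n$ is even and $0$ if $n$ is odd.

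The two parities then give the result directly:
\begin{itemize}
\item If $s=2m$, the first term gives $\binom{2m}{m}$. Each term $\partition(S\setminus\{a_k\})$ concerns $2m-1$ elements and is $0$. The total is $\binom{2m}{m}$.
\item If $s=2m+1$, the first term is $0$. Each of the $s$ terms with one element removed gives $\binom{2m}{m}$. The total is $(2m+1)\binom{2m}{m}$.
\end{itemize}
As a sanity check, for $d=1$ this recovers the known facet count of the SEP of a cycle \cite[Proposition 4.3]{DDM2022}.

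The main obstacle is conceptual rather than computational: making sure the counting in \Cref{c:cycle_facets} counts labelings, i.e.\ ordered assignments of indices to the two sides, and not unordered partitions of a multiset of values. To settle this I would go back to the bijection in that proof, which sends a facet to $\ell$ and $\ell$ back to a facet via \Cref{t:facets} and \Cref{c:facettrees}. I would also check that for $s$ odd, a labeling with $\ell_k=0$ and all other $|\ell_i|=1$ really does give a facet. This holds because removing one facet of $\Delta$ leaves a spanning forest, as $\mathbf{a}$ is the unique dependency and has full support. For $s$ even, such a labeling is impossible by parity, which is consistent with the count.
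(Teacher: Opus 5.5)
Your proof is correct and follows the paper's route. You apply \Cref{c:cycle_facets} with a $\{\pm1\}$-valued generator $\mathbf{a}$, note that the third summand vanishes, and count zero-sum sign assignments on $s$ or $s-1$ indices by parity. The paper's one-line argument uses three things implicitly, and you make each of them precise: the verification of the hypotheses, the handling of signs (your substitution $\epsilon_i'=\epsilon_ia_i$, where the paper simply takes all $a_i=1$, i.e.\ re-signs columns), and reading $\partition$ as a count of indexed labelings rather than literal pairs of multisets, which is what the paper relies on when it counts $m$-element submultisets of $\{1,\dots,1\}$ as $\binom{2m}{m}$.
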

\begin{proof}
    The statement follows directly from \Cref{c:cycle_facets} by observing that the multiset $S$ contains $1$ with multiplicity $s$ and nothing else.
    We then have two cases:
    If $s=2m$, equal weight partitions correspond to $m$-element submultisets of $S$ and $S\setminus \{1\}$ has no equal weight partition.
    If $s=2m+1$, then $S$ has no equal weight partition and equal weight partitions of $S\setminus \{1\}$ correspond to $m$-element submultisets of $S\setminus \{1\}$ and the choice of which of the $1$s to delete from $S$.
\end{proof}

Finally, we note that~\cref{c:specialcycles} specializes to the number of facets of the symmetric edge polytope of a cycle when $d=1$ (see e.g., \cite[Proposition 4.3]{DDM2022}).

\section{Gröbner bases and regular (unimodular) triangulations}\label{s:grobner}
 


The focus of this section lies in the construction of regular triangulations for general centrally symmetric lattice polytopes.  
As pointed out in \Cref{subs:cs},  every such polytope is of the form $\P_A$, where $A$ is a matrix that contains at least one vertex from each antipodal pair of vertices of $\P_A$ as a column, but that could possibly contain all lattice points of $P$. We call $A$ \emph{saturated}, if every lattice point of $\P_A$, except the origin, occurs in $[A|-A]$ as a column exactly once. It turns out that this is the right type of presentation for the purpose of this section. 

 In order to compute regular triangulations, we will compute a Gröbner basis for the corresponding toric ideal. We note that, by~\cref{l:latticepoints}, boundary maps of simplicial complexes and incidence matrices of graphs are saturated.
  In the latter case, it is well-known that the toric ideals have a \emph{squarefree} Gr\"obner basis which implies that these polytopes always admit a regular unimodular triangulation~\cite{DHO2024,HJM2019}. 
  Though it is not surprising, that this is no longer true for arbitrary centrally symmetric lattice polytopes, it turns out that this can even fail  for symmetric (co)homology polytopes (see ~\cref{t:noSquarefreeGroebner}). 
  Still the Gr\"obner basis, we provide, can be seen as a natural analogue of the Gr\"obner basis from \cite{DHO2024,HJM2019} for the considered more general setting (see \Cref{t:groebner_homology}).

In order to compute a Gröbner basis of $I_{\P_A}$, we need to introduce a monomial order and set up some notation. In the following, let $A\in \Z^{m\times s}$ be saturated.

Inside the toric ring, there is one variable for each lattice point in $\P_A$. We will use $z$ for the variable corresponding to the origin.  We further use $x_{F^+}$  and $x_{F^-}$ for the variable corresponding to a column $F$ of $A$ and its negative $-F$, respectively. To simplify notation, we will also just write $x_F$ if $F$ is a column of $[A|-A]$. Moreover, for a column $F$ of $A$, we set $\overline{F^+}\coloneqq F^-$ and $\overline{F^-}=F^+$.  


For a fixed ordering $F_1,\ldots, F_s$ of the columns of $A$, we consider the degree reverse lexicographic ordering induced by 
    \begin{equation}\label{eq:order}
        z\prec x_{F_{1}^+}\prec x_{F_{1}^-} \prec \dots \prec x_{F_{s}^+}\prec x_{F_s^-}.        
    \end{equation}
    A linear dependency $\mathbf{0}\neq \mathbf{a}\in \ker_\Z A$ is will be called \emph{minimal}  if there is no $\mathbf{a'}\in \ker_\Z A$ with $\mathbf{a}'\neq \mathbf{a}$ such that $\sign(\mathbf{a}_i')=\sign(\mathbf{a}_i')$ and $|\mathbf{a}'_i|\leq |\mathbf{a}_i|$ for all $1\leq i\leq s$.  Here, $\sign(\mathbf{a}_i)\in \{-,0,+\}$ denotes the sign of $\mathbf{a}_i$ and similarly for $\mathbf{a}'_i$.

    While for (generalized) symmetric edge polytopes the non-zero entries of  every minimal linear dependency are either $1$ or $-1$,  this does not need to be the case in our more general setting. This forces several changes compared to the approach in \cite[Proposition 3.8]{HJM2019}. 
    We associate the following multiset to a vector  $\mathbf{a}\in \Z^s$:
    \[
        M_\mathbf{a} = \{\underbrace{\sign(\ba_{\ell})F_{\ell},\cdots, \sign(\ba_{\ell}) F_{\ell}}_{|\ba_{\ell}| \mbox{ times}}~:~\ell\in \supp(\mathbf{a}) \},
    \]
    where $\supp(\mathbf{a})\coloneqq\{1\leq i\leq s~:~\ba_i\neq 0\}$ denotes the \emph{support} of $\mathbf{a}$. 
    Note that $|M_\mathbf{a}|=\sum_{\ell=1}^s |\ba_\ell|$. Moreover, by construction, multisets on the column set of $[A|-A]$ are in natural bijection with vectors in $\Z^s$ and, under this bijection, minimal linear dependencies $\mathbf{a}$ correspond to multisets  $M_{\mathbf{a}}$ that are minimal with respect to inclusion.

Given a multiset $M_\ba$ on the column set of  $[A|-A]$, we let $-M\coloneqq \{-F~:~F\in M_\ba\}$ be the multiset obtained from $M_\ba$ by flipping all signs and we write $\mathbf{m}_{M_\ba}$, or $\mathbf{m}_\ba$ for short, for the monomial associated to $M_\ba$, i.e., $\mathbf{m}_{M_\ba}=\mathbf{m}_\ba =\prod_{F\in M_\ba}x_F$.
For $\mathbf{a}\in \Z^s$, we further set $\min(\mathbf{a})=\min\supp(\mathbf{a})$. Our main result in this section is the following:
    \begin{theorem}\label{t:groebner_homology}\
    Let $A \in \Z^{m \times s}$ be saturated. Then the following binomials form a \grobner basis of the toric ideal $I_{\P_A}$ with respect to the degrevlex monomial ordering induced by~\eqref{eq:order}:
    \begin{enumerate}[label=(\arabic*)]

    \item\label{i:gb1} For every minimal linear dependency $\mathbf{a}\in \Z^s$
    with $\vert M_\mathbf{a}\vert =2k$ and every $k$-element submultiset $M\subseteq M_\mathbf{a}$ such that
    \begin{enumerate}[label=(\alph*),ref=(1)(\alph*)]
        \item\label{i:gb1a} $+F_{\min(\mathbf{a})}\notin M$, if $\mathbf{a}_{\min(\mathbf{a})}>0$, and, 
        \item\label{i:gb1b} $\underbrace{\{-F_{\min(\mathbf{a})},\ldots,-F_{\min(\mathbf{a})}\}}_{|\ba_{\min(\mathbf{a})}| \mbox{ times}}\not\subseteq M$, if $\mathbf{a}_{\min(\mathbf{a})}<0$:
    \end{enumerate}
    $$
            \prod_{F \in M} x_{F} -  \prod_{F \in M_{\ba}\setminus M} x_{\overline{F}}.
        $$
    \item\label{i:gb2} For every minimal linear dependency $\ba\in \Z^s$ with $\vert M_{\ba}\vert=2k+1$ and every $(k+1)$-element submultiset $M\subseteq M_{\ba}$:
    $$
          \prod_{F \in M} x_{F} -  z\prod_{F \in M_{\ba}\setminus M} x_{\overline{F}}.        
    $$
    \item\label{i:gb3} For every minimal linear dependency $\ba\in \Z^s$ such that $\vert M_{\ba}\vert=2k$ and every $(k+1)$-element submultiset $M$ that contains the smallest element $+F_{\min(\mathbf{a})}$ at least twice:
     $$
          \prod_{F \in M} x_{F} -  z^2\prod_{F \in M_{\ba} \setminus M} x_{\overline{F}}.
    $$
    \item\label{i:gb4} For any $F \in \{F_1, \dots, F_s\}$:
    $$x_{F^+} x_{F^-} - z^2.$$
        
    \end{enumerate}
    In each case, the binomial is written so that the initial term has positive sign.
\end{theorem}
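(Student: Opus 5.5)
We follow the standard strategy for Gröbner bases of toric ideals, as in \cite[Proposition 3.8]{HJM2019}, adapted to arbitrary minimal linear dependencies. First we check that every listed binomial lies in $I_{\P_A}$. Since the lattice points of $\P_A$ are exactly $\mathbf{0}$ and the columns of $[A|-A]$, each binomial is homogeneous of degree $k$, $k+1$ or $k+1$ respectively. Write $\sum_{F\in M_\ba}F=A\ba=\mathbf{0}$. Then
\[
\sum_{F\in M}F=-\sum_{F\in M_\ba\setminus M}F=\sum_{F\in M_\ba\setminus M}\overline{F},
\]
so both monomials map to the same element of $\K[P]$. The padding by $z$ or $z^2$ only corrects the degree, and type \ref{i:gb4} is immediate.

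Next we verify that the stated leading terms are the initial terms for the degrevlex order \eqref{eq:order}. In types \ref{i:gb2}, \ref{i:gb3} and \ref{i:gb4} the other monomial is divisible by $z$, the smallest variable, while the claimed initial monomial is not. For degrevlex this forces the $z$-free monomial to be larger. For type \ref{i:gb1} both monomials are $z$-free, so we compare the smallest variables appearing, which are those indexed by $F_{\min(\ba)}$. Conditions \ref{i:gb1a} and \ref{i:gb1b} are designed exactly so that the written monomial has fewer copies of the smaller variable $x_{F^+_{\min(\ba)}}$, or the tie is broken correctly. We will do this by a short case analysis on the sign of $\ba_{\min(\ba)}$ and on how many copies of $\pm F_{\min(\ba)}$ lie in $M$.

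The main step is to show that the initial terms generate $\init_\prec(I_{\P_A})$. We use the criterion that it suffices to show that for every binomial $u-v\in I_{\P_A}$ with $u\neq v$, and in fact for every pair of distinct monomials with the same image, the larger one is divisible by some listed initial term. Equivalently, we show that the standard monomials (those divisible by no listed initial term) have pairwise distinct images under $\pi$. So take two distinct standard monomials $u$ and $v$ of the same degree and the same image, and assume they share no variable.

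Since type \ref{i:gb4} initial terms are excluded, neither $u$ nor $v$ contains both $x_{F^+}$ and $x_{F^-}$. Writing the $z$-free parts as multisets, their signed difference gives a nonzero vector $\mathbf{b}\in\ker_\Z A$. We then decompose $\mathbf{b}$ as a conformal (sign-compatible) sum of minimal linear dependencies, which is possible because the minimal dependencies form the Graver-type set of $A$. Pick one minimal $\ba$ conformal to $\mathbf{b}$, and compare the portions of $u$ and $v$ lying in $M_\ba$. Whichever side carries at least half of $|M_\ba|$ yields a submultiset $M$ of the appropriate size, namely $k$, $k+1$ or $k+1$, whose monomial divides $u$ or $v$. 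The $z$-counts and degree bookkeeping then determine which of the types \ref{i:gb1}--\ref{i:gb3} applies.

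We expect this final matching to be the main obstacle. One must show that a submultiset $M$ satisfying the exact side conditions, in particular \ref{i:gb1a}, \ref{i:gb1b} and the ``contains $+F_{\min(\ba)}$ at least twice'' clause in \ref{i:gb3}, can always be found in the larger of $u$ and $v$. This should come from a counting argument on the multiplicities of $x_{F^\pm_{\min(\ba)}}$ in $u$ and $v$, combined with the degrevlex comparison. The case $|M_\ba|$ even with exactly half on each side is where type \ref{i:gb3} is needed, and we would handle it first.
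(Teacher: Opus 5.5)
Your reduction is the same as the paper's: type (4) removes $x_{F^+}x_{F^-}$, you pass to coprime monomials, and you take a minimal dependency $\ba$ conformal to the difference vector $\mathbf{b}$. Your membership and leading-term checks are also fine, since (1a)/(1b) are exactly the conditions under which the written monomial wins the degrevlex comparison at $x_{F^{\pm}_{\min(\ba)}}$. The gap is that you stop where the content of the theorem lies. The final matching is never carried out, and the one concrete claim you make about it is wrong: the balanced even case does not need type (3).

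To speak of sides you must fix a splitting $|\ba_\ell|=\mathbf{b}'_\ell+\mathbf{c}'_\ell$. Here $\mathbf{b}'_\ell$ is at most the exponent of $x_{\sign(\ba_\ell)F_\ell}$ in $u$, and $\mathbf{c}'_\ell$ is at most the exponent of $x_{-\sign(\ba_\ell)F_\ell}$ in $v$. Such a splitting exists because $u,v$ are coprime and $\ba$ is conformal to $\mathbf{b}$. It gives $M_u\subseteq M_\ba$ with $\bm_{M_u}\mid u$ and $\prod_{F\in M_\ba\setminus M_u}x_{\overline F}\mid v$. If $|M_\ba|=2k$ and $|M_u|=k$, then $\bm_{M_u}-\prod_{F\in M_\ba\setminus M_u}x_{\overline F}$ is, up to sign, itself a type (1) binomial for $\ba$ or for $-\ba$; this follows from the leading-term analysis you already did. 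So its larger monomial is a listed initial term dividing $u$ or $v$, and type (3) plays no role here.

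Type (3) is needed in the unbalanced even case $|M_u|=k+j$ with $j\ge 1$ (or symmetrically for $v$ and $-\ba$).
\begin{itemize}
\item If $\ba_{\min(\ba)}<0$, delete $j$ elements of $M_u$ to obtain (1b). Include a copy of $-F_{\min(\ba)}$ among them if all $|\ba_{\min(\ba)}|$ copies lie in $M_u$.
\item If $\ba_{\min(\ba)}>0$ and $+F_{\min(\ba)}$ occurs at most $j$ times in $M_u$, delete all its copies to obtain (1a).
\item If $+F_{\min(\ba)}$ occurs at least $j+1\ge 2$ times in $M_u$, every $k$-submultiset of $M_u$ violates (1a). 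Type (1) via $\ba$ then cannot be applied to $u$. Instead take the $k+1$ smallest elements of $M_u$, which contain $+F_{\min(\ba)}$ at least twice. The type (3) binomial then has its $z$-free monomial as initial term, since the other monomial carries $z^2$.
\end{itemize}
The odd case $|M_\ba|=2k+1$ is immediate from type (2). This case analysis (Cases 1--3 of the paper's proof) is what your proposal is missing.

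Also, with your standard-monomial criterion you only need a listed initial term dividing $u$ \emph{or} $v$. Requiring it in ``the larger of $u$ and $v$'' is unnecessary, because the divisor is found in whichever monomial carries the larger share of $M_\ba$.
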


In the following, a binomial that falls into one of the cases in the theorem will be referred to as a binomial of type \ref{i:gb1a}, \ref{i:gb1b}, \ref{i:gb2}, \ref{i:gb3} or \ref{i:gb4}.
\begin{proof}
    We only need to show that for any binomial $\mathbf{m}_1-\mathbf{m}_2\in I_{\P_A}$ one of $\bm_1$ or $\bm_2$ is divisible by a leading monomial of  one of the binomials above (see e.g.,~\cite{S1996}). Moreover, we may assume that $\mathbf{m}_1$ and $\mathbf{m}_2$ are coprime. 
 Let $\mathbf{p}, \mathbf{q}\in \R^{2s+1}$ be the exponent vector of $\bm_1$ and $\bm_2$, respectively (with the coordinates numbered from $0$ to $2s$ ordered as in \eqref{eq:order}).   
  
     If there exists $F\in\{F_1,\ldots,F_s\}$ such that $x_{F^+} x_{F^-} \mid \bm_i$ (for $i\in\{1,2\}$), then $\bm_i$ is divisible by the leading term of $x_{F^+} x_{F^-} - z^2$, which is a binomial of type~\ref{i:gb4}.
     Hence assume that $x_{F^+} x_{F^-}$ does neither divide $\bm_1$ nor $\bm_2$ for any column $F$ of $A$. 
  Let $\bb,\bc$ be the projections of the exponent vectors that are obtained by setting 
            
    $$
        \bb_\ell = \bp_{2\ell-1} - \bp_{2\ell} \qand \bc_\ell = \bq_{2\ell-1} - \bq_{2\ell} \qfor 1\leq \ell\leq s.
    $$
Note that, since neither $\bm_1$ nor $\bm_2$ are divisible by $x_{F^+}x_{F^-}$ for any column $F$ of $A$, we have $\bb_\ell\in\{\bp_{2\ell-1},-\bp_{2\ell}\}$, and similarly for $\bc_\ell$.
Since $\bm_1 - \bm_2 \in I_{\P_A}$, it follows that $\mathbf{b}-\mathbf{c}$ is a linear dependency for $A$.
Let $\ba$ be a minimal linear dependency for $A$ having the same sign pattern as $\bb-\bc$ and being componentwise smaller than or equal to $\bb-\bc$ in absolute value. We might have $\ba=\bb-\bc$. If $\mathbf{b}$ and $\mathbf{c}$ have a nonzero entry at the same position, those must have opposite signs, since $\bm_1$ and $\bm_2$ are coprime by assumption. This implies 
\[
|\ba_\ell|\leq |(\bb-\bc)_\ell|=|\bb_\ell|+|\bc_\ell| \qfor 1\leq \ell \leq s.
\]
 Moreover, restricted to the nonzero coordinates of $\bb$, the sign patterns of $\ba$ and $\bb$ coincide. Similarly, restricted to the nonzero coordinates of $\bc$,  the vectors $\ba$ and $\bc$ have opposite sign patterns. 
Choose vectors $\bb',\bc'\in \mathbb{N}^s$ such that $\vert \ba_\ell \vert =  \bb'_\ell + \bc'_\ell$, $ \bb'_\ell\leq \vert \bb_\ell\vert $ and $\bc'_\ell \leq \vert \bc_\ell\vert$ for $1\leq\ell\leq s$. Moreover, let $\overline{\bb'}$ be the vector with entries $\overline{\bb'}_\ell=\sign(\ba_\ell)\cdot\bb'_\ell$ for $1\leq \ell\leq s$ and $\overline{\bc'}= \overline{\bb'}-\ba$. 
We consider the multiset $M_\ba$ and its submultiset $M_{\overline{\bb'}}$. 
By construction, the monomial $\mathbf{m}_{\overline{\bb'}}$ and $\mathbf{m}_{\overline{\bc'}}$ divides $\bm_1$ and $\bm_2$, respectively. 
 Without loss of generality we can assume that $\bm_{\overline{\bc'}}\prec_{\rev} \bm_{\overline{\bb'}}$, otherwise consider the linear dependency $-\ba$, which swaps the roles of $\bm_1$ and $\bm_2$ and respectively the roles of $\bb$ and $\bc$. We consider several cases.

{\sf Case 1:} $|M_\ba|=2k+1$. \\
If $|M_{\overline{\bb'}}|\geq k+1$, then $\mathbf{m}_{M'}$ is the leading term of $\mathbf{m}_{M'}-z\mathbf{m}_{M_{\ba}\setminus M'}$ for any $M'\subseteq  M_{\overline{\bb'}}$ with $|M'|=k+1$. As  $\mathbf{m}_{M'}$  divides  $\mathbf{m}_{M_{\overline{\bb'}}}$, it also divides $\bm_1$. Hence, $\bm_1$ is divisible by a leading term of a binomial of type \ref{i:gb2}.

 {\sf Case 2:} $|M_\ba|=2k$. We need to distinguish further subcases.\\
{\sf Case 2(a):} $|M_{\overline{\bb'}}|=k$ and $\ba_{\min(\ba)}>0$. 
We consider the binomial $\mathbf{m}_{\overline{\bb'}}-\mathbf{m}_{\overline{\bc'}}$.
If $\overline{\bb'}_{\min(\ba)}=0$, then its leading term is $\mathbf{m}_{{\overline{\bb'}}}$, and since $\mathbf{m}_{\overline{\bb'}}$ divides $\bm_1$,  the claim follows. Note that this binomial is of type~\ref{i:gb1a}. 

{\sf Case 2(b):} $|M_{\overline{\bb'}}|=k$ and $\ba_{\min(\ba)}<0$. 
    In this case, since the leading term of the binomial $\bm_{\overline{\bb'}} - \bm_{\overline{\bc'}}$ is $\bm_{\overline{\bb'}}$ by assumption, we know that 
    $$
        \underbrace{\{-F_{\min(-\mathbf{a})},\ldots,-F_{\min(-\mathbf{a})}\}}_{|\ba_{\min(-\mathbf{a})}| \mbox{ times}} \not \subseteq M_{\overline{\bb'}}.
    $$
    Hence, $\bm_{\overline{\bb'}} - \bm_{\overline{\bc'}}$ falls into case~\ref{i:gb1b}, and it divides $\bm_1$.
    
{\sf Case 3:} $|M_{\overline{\bb'}}|= k+j$, for some $j>0$.
Consider the binomial $\bm_{\overline{\bb'}} - z^{2j} \mathbf{m}_{\overline{\bb'}-\ba}$.
Then the leading term is $\bm_{\overline{\bb'}}$, as it is not divisible by $z$. 
We will once again consider several subcases:

{\sf Case 3(a):} $\overline{\bb'_{\min(\ba)}}\geq 2$. 
In this case, consider the $(k+1)$-element submultiset $M' \subseteq M_{\overline{\bb'}}$ obtained by deleting the largest $j-1$ elements with respect to the chosen ordering on the columns of $A$. As $M'$ contains $+F_{\min(\ba)}$ at least twice, $\bm_{M'}$ is the leading term of the binomial $\bm_{M'}-z^2 \bm_{M_{\ba} \setminus M'}$ of type~\ref{i:gb3}, and it divides $\bm_1$.

{\sf Case 3(b):} $\overline{\bb'_{\min(\ba)}}= 1$.
In this case, consider the $k$-element submultiset $M' \subset M_{\overline{\bb'}}$ obtained by deleting the smallest $j$ elements with respect to the chosen ordering on the columns of $A$. As $M'$ does not contain $\pm F_{\min(\ba)}$, the monomial $\bm_{M'}$ is the leading term of the binomial $\bm_{M'}-\bm_{M_{\ba}\setminus M'}$ of type~\ref{i:gb1a}, and it divides $\bm_1$.

{\sf Case 3(c):} $\overline{\bb'_{\min(\ba)}}\leq 0$.
In this case, consider the $k$-element submultiset $M' \subset M_{\overline{\bb'}}$ obtained by deleting the smallest $j$ elements with respect to the chosen ordering on the columns of $A$. 
As  $-F_{\min(\ba)}$ was  deleted at least once, 
the monomial $\bm_{M'}$ is the leading term of the binomial $\bm_{M'}-\bm_{M_{\ba}\setminus M'}$ of type~\ref{i:gb1b}, and it divides $\bm_1$.
\end{proof}

\begin{remark}[\emph{Triangulations of symmetric (co)homology polytopes and topology}]\label{re:grobnerhomology}
 In the case of (co)homology polytopes, minimal linear dependencies correspond to (co)cycles in (co)homology. This yields a topological description of  triangulations of $\P_\Delta$ and $\P^\Delta$.
\end{remark}

\begin{remark}[\emph{The difference between totally unimodular and non-totally unimodular matrices}]
    We want to emphasize that dropping the totally unimodular condition on the underlying matrix $A$, only forces one completely new type of binomials in the \grobner basis of $I_{\P_A}$ to appear, namely case \ref{i:gb3} in~\cref{t:groebner_homology}. 
    This case does not occur in the totally unimodular case, that has been described in \cite{DJK2024}, since the kernel of any totally unimodular matrix can be generated by a set of $\{-1,0,1\}$-vectors (see e.g., \cite[Lemma 3.18]{Onn}).
    
    In particular,~\cref{t:groebner_homology} shows that 
    if every minimal linear dependency only uses $\{-1,0,1\}$-entries, $I_{\P_A}$ has a squarefree \grobner basis and hence, $\P_A$ admits a regular unimodular triangulation.

\end{remark}

\begin{figure}
\begin{subfigure}[b]{0.4 \textwidth}
\centering
          $$\partial_2^{\top} = \begin{pmatrix}
        1 & -1 & 0 & 1 & 0 & 0\\
        1 & 0 & -1 & 0 & 1 & 0 \\
        0 & 1 & -1 & 0 & 0 & 1 \\
        0 & 0 & 0 & 1 & -1 & 1
    \end{pmatrix}  $$     
\end{subfigure}
\begin{subfigure}{0.4\textwidth}
    \centering
    \includegraphics[width = 0.5\textwidth]{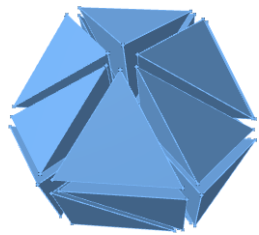}
    
\end{subfigure}
    \caption{A triangulation of a projection of $\P^{\Delta}$, where $\Delta$ is the boundary of a tetrahedron, and the corresponding coboundary map.}
\end{figure}

We want to point out that if the matrix $A$ has a minimal linear dependency $\ba$ with entries bigger than $1$ in absolute value, then the associated multiset $M_\ba$ is not a set. Consequently,  the Gröbner basis from~\cref{t:groebner_homology} might have leading terms that are not squarefree, coming from binomials of type~\ref{i:gb1b} and~\ref{i:gb3}. 
In particular, the associated triangulation of $\P_A$ is not necessarily unimodular in this situation. 
The following theorem shows that in a special situation, $\P_A$ does not have a unimodular triangulation at all.

\begin{theorem}\label{t:noSquarefreeGroebner}
    Let $A \in \Z^{m \times s}$ be a saturated matrix of rank $s-1$. Assume that (up to sign) $A$ has a unique minimal linear dependency $\ba\in \ker_\Z(A)$ with $\sum_{j=1}^n \ba_j = 2k$ and $\vert \ba_j\vert>1$ for some $1\leq j\leq s$. 
    Then $\P_A$ does not have any regular unimodular triangulation.
\end{theorem}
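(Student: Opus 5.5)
The plan is to argue geometrically rather than through all possible term orders. Suppose $\mathcal{T}$ is a regular unimodular triangulation of $\P_A$, unimodular with respect to the lattice $M_P=\aff(\P_A)\cap\Z^m$; a triangulation with strictly larger simplices is in particular not unimodular. Two standard facts drive the argument. First, every simplex of a unimodular triangulation contains no lattice points other than its vertices, so every lattice point of $\P_A$ is a vertex of $\mathcal{T}$. Second, $\mathcal{T}$ restricts to a triangulation of every face $\Omega$ of $\P_A$, and the restricted triangulation is unimodular with respect to $\aff(\Omega)\cap\Z^m$. The reason is that a full-dimensional simplex $\conv(\tau,v)$ with $\tau\subseteq\Omega$ has normalized volume equal to the lattice distance of $v$ from $\aff(\Omega)$ times the normalized volume of $\tau$, and that distance is a positive integer. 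So it suffices to exhibit a face $\Omega$ of $\P_A$ that admits no unimodular triangulation using only its lattice points. Since $A$ is saturated, these lattice points are exactly its vertices $\pm F_i$ (the origin is interior).

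Since $\rank A=s-1$, the kernel is spanned by $\ba$; by minimality $\gcd(\ba)=1$ and we may take $\ba$ with full support, dropping zero coordinates as irrelevant directions. The analysis of \Cref{t:facets} and \Cref{c:cycle_facets} then applies verbatim to $A$, since only the linear algebra of $\partial_d$ was used. The hypothesis $\sum_j|\ba_j|=2k$ is exactly what makes type-(1) facets possible.

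Take a labeling $\ell\in\{\pm1\}^s$ with $\sum_i\ell_i\ba_i=0$. This is a coboundary, so it defines a facet $\Omega_\ell=\conv(\ell_iF_i:1\le i\le s)$, which has $s$ vertices in dimension $s-2$. Its unique affine dependency is $\sum_i(\ba_i\ell_i)(\ell_iF_i)=0$, with coefficients $c_i=\ba_i\ell_i$ summing to $0$, so $\Omega_\ell$ is a circuit. Such a circuit has exactly two triangulations. One consists of the simplices omitting a single $i$ with $c_i>0$; the other consists of those omitting a single $i$ with $c_i<0$. Removing $\ell_iF_i$ leaves a sublattice of index $|c_i|=|\ba_i|$, since $\gcd(\ba)=1$. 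Hence, relative to the lattice of $\Omega_\ell$, the simplex omitting $i$ has normalized volume $|\ba_i|$ up to a common factor $\ge1$. Consequently $\Omega_\ell$ has no unimodular triangulation as soon as both sides $\{i:c_i>0\}$ and $\{i:c_i<0\}$ contain an index $j$ with $|\ba_j|>1$.

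The main obstacle is producing an equal weight partition of $S=\{|\ba_1|,\dots,|\ba_s|\}$ with a "large" element on each side, or handling the cases where none exists. The first attempt is a balancing or greedy argument on the multiset $S$, which has even total and some element $>1$. If two entries exceed $1$, try to place them on opposite sides and balance the remaining entries. The degenerate case is essentially a single large entry $|\ba_{j_0}|=m>1$ with all other entries $\pm1$. There every type-(1) facet is triangulable, so instead use the facets of type (2) or (3) from \Cref{c:cycle_facets} that omit $j_0$. Such a facet is the simplex $\conv(\ell_iF_i:i\ne j_0)$, and it must occur in $\mathcal{T}$ as a face. The full-dimensional simplex of $\mathcal{T}$ on its inner side is $\conv(\Omega,v)$ with $v\in\{0,\pm F_{j_0}\}\cup\{-\ell_iF_i\}$. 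The plan is to compute, via the supporting functional $\bw$ with $\bw^\top F_i=\ell_i$, that each of these apices gives normalized volume a multiple of $m$ or $m-c$. Here $c$ is the value $\ell_{j_0}\ba_{j_0}$, and the parity constraint coming from $\sum_j|\ba_j|=2k$ should be used to choose a facet for which all of these volumes exceed $1$.

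I expect this last case analysis, namely choosing the right facet and controlling all possible apices, to be the delicate step. If it resists, the fallback is the algebraic route via \Cref{t:sturmfels}. Here $I_{\P_A}$ is generated by the binomials of \Cref{t:groebner_homology} for the single dependency $\pm\ba$, and one would show that for every term order some power of a monomial supported on the $|\ba_{j_0}|$ copies of $x_{F_{j_0}^\pm}$ lies in the initial ideal while the monomial itself does not, so the initial ideal is never radical.
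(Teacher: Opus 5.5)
Your reduction is correct: a unimodular triangulation restricts to unimodular triangulations of the facets of $\P_A$, because the apex over a facet lies at positive integer lattice distance. Your analysis of the circuit facets $\Omega_\ell$ is also right. The gap is that the proposal never actually produces a face without a unimodular triangulation.

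The circuit argument needs an equal-weight partition of $\{|\ba_1|,\dots,|\ba_s|\}$ with an entry $>1$ on each side, and you only say you would ``try'' to balance. An even total does not even guarantee that an equal-weight partition exists (e.g.\ $\{3,3,2\}$), so this would have to come from saturation, which you never use for this purpose. The remaining case contains the paper's own example $\B$ with $\ba=(2,1,\dots,1)$, where every type~(1) facet \emph{is} unimodularly triangulable. There the choice of facet and the apex computation are only announced (``should be used''). So the one step where the hypotheses $|\ba_j|>1$ and $\sum_j\ba_j=2k$ actually enter is missing, and the algebraic fallback is too vague to replace it.

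The idea you sketch for the degenerate case in fact works in general and makes the partition analysis unnecessary:
\begin{enumerate}
\item Normalize $\ba\geq 0$ (replace $F_i$ by $-F_i$) and let $m=\ba_{j_0}\geq 2$ be a maximal entry.
\item Choose $\ell_i\in\{\pm1\}$ for $i\neq j_0$ by running through the other nonzero entries in decreasing order, always taking the sign opposite to the current partial sum. Then $t=\sum_{i\neq j_0}\ell_i\ba_i$ satisfies $|t|\leq m$. Equality would force all other nonzero entries to equal $m$, contradicting $\gcd(\ba)=1$.
\item Since $t\equiv\sum_{i\neq j_0}\ba_i\equiv m \pmod 2$, in fact $|t|\leq m-2$.
\item The functional $\bw$ with $\bw(F_i)=\ell_i$ for $i\neq j_0$ has $\bw(F_{j_0})=-t/m\in(-1,1)$. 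It cuts out the facet $\Omega=\conv(\ell_iF_i : i\neq j_0)$, a simplex whose only lattice points are its vertices by saturation. Hence $\Omega$ belongs to every lattice triangulation of $\P_A$.
\item Since $[\im_\Z A:\langle F_i : i\neq j_0\rangle]=m$, the maximal simplex $\conv(\Omega\cup\{v\})$ on top of $\Omega$ has normalized volume $m\,|1-\bw(v)|$ with respect to $\im_\Z A$. The only possible apexes are $v\in\{0,\pm F_{j_0},-\ell_iF_i\}$, giving volume $m$, $m\pm t$ or $2m$, always at least $2$.
\end{enumerate}

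Completed this way, your route proves more than the theorem: $\P_A$ has no unimodular triangulation at all, regular or not, for either choice of lattice. The paper argues algebraically instead. It uses the Gr\"obner basis of \Cref{t:groebner_homology} to show that under every term order a non-squarefree binomial of type~(3), or its mirror, is forced into the Gr\"obner basis, and then applies \Cref{t:sturmfels}. That argument only addresses regular triangulations and has to control all term orders.
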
  

\begin{proof}
In view of~\cref{t:sturmfels}, it is enough to prove that the toric ideal of $\P_A$ does not admit a squarefree Gröbner basis.
Since  $\P_{A'} = \P_A$ for any matrix $A'$ that~--~up to sign~-- has the same set of columns as $A$, without loss of generality, we can assume that $\ba_i\geq0$ for all $1\leq i\leq n$.

Assume by contradiction that there exists a term order $\prec$ with squarefree initial ideal $\initial_{\prec}(I_{\P_A})$. 

Let $\mathcal{G}$ be the \grobner basis from \Cref{t:groebner_homology} and let $J_\mathcal{G}$ be the set of \emph{all} monomials appearing in the binomials of $\mathcal{G}$. As $\mathcal{G}$ generates $I_{\P_A}$, it follows that $\initial_{\prec}(I_{\P_A})\subseteq \langle J_{\mathcal{G}}\rangle$. Moreover, since $\initial_{\prec}(I_{\P_A})$ is squarefree, we even have $\initial_{\prec}(I_{\P_A})\subseteq \langle J_{\mathcal{G}}\setminus \{z^2\}\rangle$. 
We will show the claim by constructing a binomial in $I_{\P_A}$ whose monomials are not squarefree and that has to belong to the Gr\"obner basis of 
$I_{\P_A}$ with respect to $\prec$.

Note that since $\ba_j\geq 2$, there exists a submultiset $M$ of $M_{\ba}$ with $|M|=k$ that contains $F_j$ at least twice. 
The \emph{critical} binomial is the following binomial in $I_{\P_A}$
\begin{equation}\label{eq:binom2}
\prod_{F\in M} x_{F^+} - z^2\prod_{F\in M_\ba\setminus M} x_{F^-}= x_{F_j^+}^2\bm_1 - z^2\bm_2
\end{equation}
(which is of type~\ref{i:gb3}). Since neither of its two monomials is squarefree, its leading term is not squarefree. Moreover, as $\bm_1$ and $\bm_2$ are only divisible by variables of the form $x_{F^+}$ and $x_{F^-}$, respectively, neither of them is divisible by $x_{F^+}x_{F^-}$ for any $F$. 
As (up to sign) $\ba$ is the unique linear dependency, it hence remains to check divisibility for monomials being part of binomials of degree $k$ arising from $\ba$. 
We distinguish two cases.

{\sf Case 1: $\init_{\prec}(x_{F_j^+}^2\bm_1 - z^2\bm_2)=z^2\bm_2$.}
Any monomial in $\langle J_{\mathcal{G}}\setminus \{z^2\}\rangle$ of degree at most $k$ that divides $z^2\bm_2$ has to be of degree $k$. However, since no  monomial in $\langle J_{\mathcal{G}}\setminus \{z^2\}\rangle$ of degree $k$ is divisible by $z$, none of these can divide 
 $z^2\bm_2$.
    
{\sf Case 2: $\init_{\prec}(x_{F_j^+}^2\bm_1 - z^2\bm_2)=x_{F_j^+}^2 \bm_1$.} As in Case 1, monomials in $\langle J_{\mathcal{G}}\setminus \{z^2\}\rangle$ that divide $x_{F_j^+}\bm_1$ have to be of degree $k$. The only such squarefree monomial of degree $k$ is $x_{F_j^+} \bm_1$ which appears in the unique binomial $x_{F_j^+}\bm_1 - x_{F_j^-}\bm_2$ in $I_{\P_A}$.
If $\init_\prec(x_{F_j^+}\bm_1 - x_{F_j^-}\bm_2 )=x_{F_j^-} \bm_2$, it follows that  $x_{F_j^+}^2\bm_1 - z^2\bm_2$ has to be part of the Gr\"obner basis of $\initial_{\prec}(I_{\P_A})$. 
If $\init_\prec(x_{F_j^+}\bm_1 - x_{F_j^-}\bm_2 )=x_{F_j^+}\bm_1$, then by the same argument applied to $x_{F_j^+}\bm_1 - x_{F_j^-}\bm_2$ and $x_{F_j^-}^2 \bm_2 - z^2 \bm_1$, it follows that the latter binomial  has to be part of the Gr\"obner basis of $\initial_{\prec}(I_{\P_A})$.
\end{proof}

    \begin{example}[\emph{Symmetric homology polytopes without regular unimodular triangulations}]\label{ex:nonunimodulartriangulation}
        Consider the $2$-dimensional simplicial complex $\B$ from \cref{ex:bjorner}.
       We know that $H_2(\B; \Z) \cong  \Z$ with generator
        $$
            \ba = 2[123] + [125] + [126] + [134] + [136] + [145] + [234] + [235] + [246] + [356] + [456].
        $$
       Note that $\ba$ has a coefficient $2$ in the coordinate corresponding to $[123]$, and the sum of its entries is $12$. In particular, the top boundary map $\partial_2$ of $\B$ satisfies all of the assumptions in~\cref{t:noSquarefreeGroebner} and hence we conclude $\P_\B$ does not have a regular unimodular triangulation.  
    \end{example}

\section{Crosspolytopes and the integer decomposition property}
\label{s:IDP}

    The goal of this section is twofold. In the first part, we classify all matrices that yield a crosspolytope (\Cref{l:cross_polytope}. In the second part, we ask when a general centrally symmetric polytope, given as $\P_A$, is spanning or IDP. We provide a complete answer for the first property in terms of the Smith normal form  of $A$ and provide a necessary criterion for the second property (\Cref{p:notidp}).  For both questions, we also consider the special situation of symmetric (co)homology polytopes. 

\subsection{Crosspolytopes}
We start by classifying matrices that yield crosspolytopes.
    
    \begin{lemma}\label{l:cross_polytope}
        Let $A\in \Z^{m\times s}$ such that $\P_A$ has $2s$ vertices. 
        Then $\P_A$ is an $s$-dimensional crosspolytope if and only if $A$ has rank $s$.
    \end{lemma}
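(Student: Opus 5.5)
Both directions reduce to a dimension count combined with the linear map given by $A$ itself.

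\emph{The rank condition is sufficient.} Suppose $\rank A=s$. Then the columns $F_1,\ldots,F_s$ of $A$ are linearly independent, so the linear map $\R^s\to\R^m$, $\bx\mapsto A\bx$, is injective. It sends $\pm\be_i$ to $\pm F_i$, and therefore maps the standard crosspolytope $\conv(\pm\be_i~:~1\leq i\leq s)$ onto $\P_A$. An injective linear map restricts to an affine isomorphism onto its image, so it preserves face lattices. Hence $\P_A$ is combinatorially equivalent to the standard $s$-dimensional crosspolytope. Here the hypothesis on the number of vertices is automatic, since the images $\pm F_i$ of the $2s$ vertices remain vertices.

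\emph{The rank condition is necessary.} Suppose $\P_A$ is an $s$-dimensional crosspolytope. Since $\P_A$ is centrally symmetric, the origin lies in it. Then $\dim \P_A=\rank[A|-A]=\rank A$, exactly as in the proof of \Cref{p:basic}(1). It follows that $\rank A=s$.

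The only point requiring care is the meaning of ``$s$-dimensional crosspolytope'' when $\P_A$ has $2s$ vertices. If the intended reading is only that $\P_A$ is combinatorially a crosspolytope, its dimension is forced anyway. A $k$-dimensional crosspolytope has exactly $2k$ vertices, so $2s$ vertices forces $k=s$. The rank argument above then applies unchanged. I would record this observation so that the equivalence holds under either interpretation. I do not expect any genuine obstacle beyond this bookkeeping.
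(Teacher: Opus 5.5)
Your proof is correct, but for sufficiency it takes a different route from the paper.

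The paper argues by induction on $s$. Deleting a column $F$ leaves a matrix $B$ of rank $s-1$, so $\P_B$ is an $(s-1)$-dimensional crosspolytope by induction. Since $\pm F$ lie outside the span of $\P_B$, $\P_A$ is a bipyramid over $\P_B$ with apices $\pm F$.

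You instead note that $\bx\mapsto A\bx$ is injective and carries $\conv(\pm\be_i)$ onto $\P_A$. This finishes in one step and proves more than the lemma asks:
\begin{itemize}
\item $\P_A$ is linearly isomorphic to the standard crosspolytope, not just combinatorially equivalent.
\item The $2s$-vertex hypothesis is visibly redundant in this direction.
\item It avoids the implicit bipyramid check that the segment $[-F,F]$ meets the relative interior of $\P_B$. That check does hold, since the midpoint is the origin, which lies in the relative interior of the centrally symmetric polytope $\P_B$.
\end{itemize}

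In exchange, the paper's inductive step gives a local statement that does not need the remaining columns to be independent. Any column lying outside the span of the others turns $\P_B$ into a bipyramid.

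For necessity, your argument and the paper's coincide. Both use $\dim\P_A=\rank A$, valid because the origin lies in $\P_A$, together with the fact that a crosspolytope with $2s$ vertices is $s$-dimensional.
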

    \begin{proof}
    First assume that $A$ has rank $s$. 
        We proceed by induction on $s$. If $s = 1$, then $\P_A$ is a line segment, and as such a $1$-dimensional crosspolytope.

        Now assume the result is true for $s < t$. Let $A \in \Z^{m \times t}$ be a matrix of rank $t$. Consider a fixed column $F$ of $A$ and let $B$ be the matrix obtained from $A$ by deleting $F$. Since $B$ has rank $t - 1$, it follows that $\P_B$ is a $(t-1)$-dimensional crosspolytope, by induction. Since adding $F$ to $B$ increases the rank by $1$, the vertices of $\P_A$ given by $F$ and $-F$ do not lie in the affine span of $\P_B$. As $\P_B$ contains the origin, they have to lie on opposite sides of the affine hull of $\P_B$. Consequently, $\P_{A}$ is a bipyramid over $\P_B$ with apices $F$ and $-F$. As such $\P_{A}$ is an $s$-dimensional crosspolytope. 

    The other direction is immediate since an $s$-dimensional  crosspolytope has $2s$ vertices.
        \end{proof}
In the context of symmetric homology polytopes~\cref{l:cross_polytope} can be restated as follows.
    \begin{corollary}\label{c:crosspolytope}
    Let $\Delta$ be a $d$-dimensional simplicial complex with $f_d(\Delta)=s$.  Then $\P_\Delta$ is  an $s$-dimensional crosspolytope if and only if $H_d(\Delta; \Z) = 0$.   
    \end{corollary}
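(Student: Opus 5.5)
The plan is to apply \Cref{l:cross_polytope} to the matrix $A=\partial_d$, which is an $(f_{d-1}(\Delta)\times s)$-matrix, and then translate its rank condition into a statement about top homology.

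First I verify the vertex hypothesis of \Cref{l:cross_polytope}. By \Cref{r:pure} we may assume $\Delta$ is pure: lower-dimensional facets contribute only zero rows to $\partial_d$, which changes neither the columns nor $H_d(\Delta;\Z)=\ker_\Z\partial_d$. For pure $\Delta$, \Cref{l:latticepoints} gives $f_0(\P_\Delta)=2f_d(\Delta)=2s$. Hence $\P_\Delta$ has exactly $2s$ vertices, namely the columns of $\partial_d$ and their negatives. \Cref{l:cross_polytope} then says that $\P_\Delta$ is an $s$-dimensional crosspolytope if and only if $\rank\partial_d=s$.

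Next I translate the rank condition. Since $\dim\Delta=d$, there is no $\partial_{d+1}$, so $H_d(\Delta;\Z)=\ker_\Z\partial_d$. The rank of $\partial_d$ equals $s$ exactly when $\ker_\Q\partial_d=0$. Because $\ker_\Z\partial_d$ is a subgroup of $\Z^s$, it is zero if and only if $\ker_\Q\partial_d$ is zero: any rational kernel vector can be cleared of denominators to give an integral one. Equivalently, \Cref{p:basic}(1) gives $\dim\P_\Delta=s-\dim_\Q H_d(\Delta;\Q)$, and $H_d(\Delta;\Z)$ is free. Either way, $\rank\partial_d=s$ if and only if $H_d(\Delta;\Z)=0$, which proves the equivalence.

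The argument has no real obstacle. The only point needing care is the reduction to the pure case, since \Cref{l:latticepoints} is stated for pure complexes.
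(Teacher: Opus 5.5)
Your proposal is correct and matches the paper's proof, which applies \Cref{l:cross_polytope} to $A=\partial_d$ and observes that $\rank\partial_d=s$ if and only if $H_d(\Delta;\Z)=\ker_\Z\partial_d=0$. Your explicit check of the $2s$-vertex hypothesis via \Cref{l:latticepoints}, after reducing to the pure case, fills in a step the paper leaves implicit.
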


This corollary, in particular, means that all symmetric homology polytopes coming from a dimensional simplicial complex with trivial top homology and a fixed number of facets have the same combinatorial type. Namely,  the one of a crosspolytope, which is as simple as it could get. 

    \begin{proof}
        The claim follows from \Cref{l:cross_polytope} by observing that the $\rank \partial_d=s$ if and only if $H_d(\Delta;\Z)=0$.
    \end{proof}

    One might ask under which conditions symmetric cohomology polytopes are combinatorially equivalent to crosspolytopes. Since the top boundary map of a simplicial complex is never injective and hence always has nontrivial linear dependencies, this is never the case.combinatorially equivalent to a crosspolytope.

\subsection{The integer decomposition property}
We start with  a complete characterization of when centrally symmetric polytopes, given as $\P_A$, are spanning. As such, this also yields a necessary criterion for $\P_A$ to be IDP.
    \begin{proposition}\label{p:notidp}
        Let $A\in \Z^{m\times s}$ be a saturated matrix of rank $r$ with elementary divisors $\alpha_1\leq  \dots\leq \alpha_r$.  Then $\P_A$ is spanning if and only if $\alpha_r=1$.
        In particular, if $\alpha_r > 1$, then $\P_A$ is not IDP.
    \end{proposition}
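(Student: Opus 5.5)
Because $A$ is saturated, the lattice points of $\P_A$ are exactly the origin together with the columns of $A$ and $-A$. Hence the integer combinations of lattice points of $\P_A$ form precisely $\im_\Z A$. On the other side, $\aff(\P_A)$ is the linear span $\im_\R A$, since $\mathbf{0}$ lies in $\P_A$. So $\P_A$ is spanning if and only if
\[
\im_\Z A = \im_\R A\cap \Z^m ,
\]
that is, if and only if the lattice $\im_\Z A$ is saturated in $\Z^m$. I will check this equality using the Smith normal form $D=S^{-1}AT^{-1}$, with $S,T$ weakly unimodular.

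First, translate to $D$. Since $T$ is invertible over $\Z$, we have $\im_\Z A = S\,\im_\Z D$ and $\im_\R A=S\,\im_\R D$. Because $S$ and $S^{-1}$ are integral, $S$ maps $\Z^m$ bijectively onto itself. Therefore $\im_\R A\cap\Z^m = S(\im_\R D\cap \Z^m)$, and it suffices to treat $D$.

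For $D=\diag(\alpha_1,\dots,\alpha_r)$ the computation is direct. We have $\im_\R D\cap\Z^m=\Z\be_1\oplus\cdots\oplus\Z\be_r$, while $\im_\Z D=\alpha_1\Z\be_1\oplus\cdots\oplus\alpha_r\Z\be_r$. These coincide if and only if every $\alpha_i=\pm1$. Since $\alpha_i\mid\alpha_{i+1}$, this is equivalent to $\alpha_r=1$ (up to sign). If $\alpha_r>1$, an explicit witness is $S\be_r$: it lies in $\aff(\P_A)\cap\Z^m$ but not in $\im_\Z A$.

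For the final assertion I use the fact, already noted in the preliminaries, that IDP implies spanning. If the IDP argument needs to be made explicit, it goes as follows. Take $\bx\in\im_\R A\cap\Z^m$ and write it as a real combination of columns of $[A|-A]$ with nonnegative coefficients. Then $\bx\in k\P_A$ for some $k\in\N$; this uses that $\mathbf{0}$ lies in the relative interior of $\P_A$ (by central symmetry), so every point of the affine hull lies in some dilate. IDP then writes $\bx$ as a sum of lattice points of $\P_A$, so $\bx\in\im_\Z A$.

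The only delicate point is the identification of the sublattice generated by the lattice points of $\P_A$ with $\im_\Z A$, which is where saturation is used. I expect no real obstacle beyond this.
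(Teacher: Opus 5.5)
Your proof is correct and follows essentially the same route as the paper. Like the paper, you pass to the Smith normal form $D=S^{-1}AT^{-1}$, compare $\im_\R D\cap\Z^m$ with $\im_\Z D$, and use $S\be_r$ as the witness when $\alpha_r>1$. You are also more explicit than the paper about two points: where saturation is needed, namely to identify the lattice generated by the lattice points of $\P_A$ with $\im_\Z A$, and why IDP implies spanning.
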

 
    \begin{proof}
    Let $D = S^{-1} A T^{-1}$ be the SNF of $A$, where $S\in \Z^{m\times m}$ and $T\in \Z^{s\times s}$ are weakly unimodular matrices.
    By construction, the columns of $A$ and $D$ span isomorphic lattices, and this isomorphism extends to an isomorphism between $\aff([A|-A])$ and $\aff([D|-D])$. Consequently, $\P_A$ is spanning if and only if every lattice point in $\aff([D|-D])$ can be written as an integer combination of columns of $D$. Since the $r$\textsuperscript{th} unit vector belongs to  $\aff([D|-D])$  but not to the lattice $\im_\Z D$ spanned by $D$ (as $\alpha_r>1$), it follows that $\P_A$ is not spanning and, hence, not IDP.
    %
    We also note that the same argument proves that $\P_A$ is spanning if $\alpha_r=1$.
    \end{proof}

    In the context of symmetric (co)homology polytopes, \Cref{p:notidp} can be interpreted topologically as follows.

    \begin{corollary} \label{c:notidphomology}
         Let $\Delta$ be a $d$-dimensional simplicial complex such that $H_{d-1}(\Delta; \Z)$ has torsion. Then $\P_\Delta$ and $\P^\Delta$ are not IDP.
     \end{corollary}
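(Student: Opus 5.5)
The plan is to apply \Cref{p:notidp} to both matrices $\partial_d$ and $\partial_d^\top$. First, both are saturated: by \Cref{l:latticepoints} the only lattice points of $\P_\Delta$ and $\P^\Delta$ are the origin and the columns of $[\partial_d|-\partial_d]$ (resp. $[\partial_d^\top|-\partial_d^\top]$). For $\P_\Delta$ all columns are distinct. For $\P^\Delta$, repeated columns of $\partial_d^\top$ arising from free ridges of the same facet have to be removed. By \Cref{r:pure} we may also assume $\Delta$ is pure; lower-dimensional facets do not change the polytopes, and $\partial_d$ only acquires zero rows, which affects neither the nonzero elementary divisors nor the torsion of $H_{d-1}$.

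Next, by \Cref{t:munkressnf} applied with $i=d$, the torsion part of $\tilde H_{d-1}(\Delta;\Z)$ is $\bigoplus_j \Z_{\alpha_j}$, where the $\alpha_j$ are the elementary divisors of $\partial_d$. If $H_{d-1}(\Delta;\Z)$ has torsion, some $\alpha_j>1$, and since $\alpha_j\mid\alpha_r$ we get $\alpha_r>1$. \Cref{p:notidp} then shows that $\P_\Delta$ is not spanning, hence not IDP.

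For $\P^\Delta$ I use that the transpose has the same Smith normal form: if $S^{-1}\partial_d T^{-1}=D$, then $(T^{-1})^\top\partial_d^\top (S^{-1})^\top=D^\top$, with the same nonzero diagonal entries. Equivalently, the gcds of the $i\times i$ minors agree, since minors of the transpose are minors of the original. Deleting duplicate columns of $\partial_d^\top$ does not change these gcds either, because a minor using a repeated column twice is zero and otherwise it coincides with a minor of the reduced matrix. So the saturated matrix $A^\Delta$ also has $\alpha_r>1$, and \Cref{p:notidp} gives that $\P^\Delta$ is not IDP.

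The main point needing care is this saturation and duplicate-column issue for $\P^\Delta$; the rest is a direct combination of \Cref{t:munkressnf} and \Cref{p:notidp}.
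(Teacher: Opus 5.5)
Your proposal is correct and follows the paper's own proof: \Cref{t:munkressnf} yields an elementary divisor $\alpha_r>1$ of $\partial_d$, the transpose $\partial_d^\top$ has the same nonzero elementary divisors, and \Cref{p:notidp} then shows that neither polytope is spanning, hence neither is IDP. Your extra bookkeeping on purity, saturation and repeated columns is sound and more careful than the paper's one-line argument. One small nuance: two free ridges of the same facet $\sigma$ can give columns $e_\sigma$ and $-e_\sigma$ of $\partial_d^\top$, so they agree only up to sign. Your argument about vanishing minors covers this case equally well.
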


     \begin{proof}
        By~\cref{t:munkressnf},  $\partial$ (and hence, $\partial^\top$) has an elementary divisor $\alpha_r > 1$ if and only if $H_{d - 1}(\Delta; \Z)$ has torsion. The result then follows directly by~\cref{p:notidp}.
     \end{proof}

Positively stated, the previous corollary says that for a $d$-dimensional complex $\Delta$, if $\P_\Delta$ or $\P^\Delta$ is IDP, then, necessarily, the $(d-1)$\textsuperscript{st} homology group of $\Delta$ has to be torsion-free. 
    The next example shows that the converse statement is not true in general.

    \begin{example}[\emph{A non-IDP symmetric homology polytope}]
    \label{ex:bjornernotidp}
    Let $\B$ be the $2$-dimensional simplicial complex from \cref{ex:bjorner}. Since $H_1(\B; \Z) = 0$, we cannot apply~\cref{c:notidphomology}. Using Sage~\cite{sagemath} we compute that the $h^\ast$-polynomial of $\P_{\B}$ is given by 
    $$
        t^{10} + 12t^9 + 67t^8 + 232t^7 + {\color{red} 562t^6 +  1276t^5} + 562t^4 + 232t^3 + 67t^2 + 12t + 1,
    $$
    while the numerator polynomial of the Hilbert series of $K[\P_{\B}]$ equals
    $$
        t^{10} + 12t^9 + 67t^8 + 232t^7 + {\color{red}814t^6 + 1024t^5} + 562t^4 + 232t^3 + 67t^2 + 12t + 1.
    $$
    Since these two polynomials are different, $\P_\B$ is not IDP (see the last paragraph before \Cref{s:basic}). An example of a lattice point in the $5$\textsuperscript{th} dilate of $\P_\B$ that breaks the IDP property is provided by:
    $$
        (-1, -1, 0, 1, 1, -1, 0, 0, 0, -1, -1, 0, 0, -1, 0) \in \R^{15},
    $$
    which corresponds to 
    $$
        -[12] - [13] + [15] + [16] -[23] -[34] -[35] - [46]\in C_1(\B)
    $$
    Moreover, a computation shows that there are exactly $252 = 1276 - 1024$ such points.
    \end{example}

We now give an example of a non-IDP reflexive symmetric homology polytope that is combinatorially equivalent to a crosspolytope, and where the point causing failure of the IDP property can be understood topologically.
    
    \begin{example}[\emph{Non IDP symmetric homology polytopes vs torsion of simplicial complexes}]\label{ex:projectiveplanenotidp}
        Let $\Delta_{\R\PP^2}$ be the triangulation of the real projective plane from~\cref{ex:projectiveplane}. Then since $H_1(\Delta_{\R\PP^2}; \Z) = \Z_2$ and $H_2(\Delta_{\R\PP^2}; \Z) = 0$, \cref{c:notidphomology,c:crosspolytope} directly imply that $\P_{\Delta_{\R\PP^2}}$ is a non-IDP  crosspolytope. This can also be seen by noticing that 
    $$
        \frac{1}{10} F_1 + \dots + \frac{1}{10} F_{10} = \frac{1}{5}\bf{1}
    $$
    where $F_i$ ranges over the columns of $\partial_2$ and $\bf{1}$ is the all ones vector in $\R^{15}$. In particular, we have ${\bf{1}} \in 5\P_{\Delta_{\R\PP^2}}$, but it is impossible to decompose the vector $\bf{1}$ as a sum of five lattice points of $\P_{\Delta_{\R\PP^2}}$.
    
    It should be noted that the lattice point that breaks the IDP property of $\P_{\Delta_{\R\PP^2}}$ corresponds to the torsion generator of $H_1(\Delta_{\R\PP^2}; \Z)$ provided in \eqref{eq:torsionGen}, where every facet occurs with coefficient $1$.
    \end{example}

    \begin{remark}[\emph{Regular unimodular triangulations vs squarefree initial ideals}]
        \cref{ex:projectiveplanenotidp} shows that $\P_{\Delta_{\R\PP^2}}$ is not IDP and therefore cannot have a regular unimodular triangulation with respect to the lattice $\Z^9$ (see e.g., \cite[Theorem 2.1]{FH2024}). 
        On the other hand, since $H_2(\Delta_{\R\PP^2}; \Z)=0$ it has a squarefree \grobner basis by~\cref{t:groebner_homology,re:grobnerhomology}. By \Cref{t:sturmfels}, $\P_{\Delta_{\R\PP^2}}$ hence has a regular unimodular triangulation with respect to the ambient lattice $\P_{\Delta_{\R\PP^2}}\cap\Z^9$. This examples showcases subtleties very good the dependence on the considered lattice when one speaks about unimodular triangulations.
    \end{remark}

    \section{Reflexivity}\label{s:reflexivity}
    The focus in this section lies in the question of when a centrally symmetric polytope, given as $\P_A$, is reflexive. In particular, we are aiming towards an answer just in terms of the matrix $A$. We do not want to provide an answer in full generality (see \cite[Lemma 2.11]{OH2014} for results in this direction), but will mostly restrict to the case that the polytope is a crosspolytope. 
    While in \Cref{s:grobner} saturated matrices were the right representation to be considered, to verify reflexivity knowing the vertices of the polytope is enough. Similarly to \cref{l:cross_polytope}, we will therefore require that the number of vertices of $\P_A$ equals the number of columns of $[A|-A]$  (cf., \Cref{l:latticepoints}) and we will call such a matrix \emph{reduced}. If $A$ is reduced, then, in particular, no pair of columns of $A$ is linearly dependent. 

In the following, we will first consider general centrally symmetric lattice polytopes and then apply our results to symmetric homology polytopes.

    \subsection{Reflexivity of general centrally symmetric lattice polytopes}\label{subs:ReflexivityGeneral}

    A general method for showing that a polytope is reflexive, is to prove that the matrix formed by the vertices of the polytope is totally unimodular (see~\cref{t:TUreflexive}). This approach has successfully been applied for  symmetric edge polytopes~\cite{H2015} as well as their generalization to regular matroids~\cite{DJK2024}.
    Since we do not require our matrices to be totally unimodular, we will not be able to follow this approach. 
    
    Let $\{F_1,\ldots,F_n\}$ denote the set of columns of $A$.
    By the well-known duality between the face lattices of a polytope and its polar (see e.g., \cite{Ziegler}), vertices of the polar polytope $\P_A^\lor$ correspond to facets of $\P_A$ as follows: 
  Given a  facet $\F$ of $\P_A$, let $A_\F$ denote the submatrix of $[A|-A]$ consisting of the columns of $[A|-A]$ corresponding to the vertices in $\F$. Moreover, let $\overline{A}_\F$ denote the complimentary submatrix to $[A_\F| -A_\F]$, i.e., the matrix consisting of the columns of $[A|-A]$ that do not belong to $[A_\F| -A_\F]$. 
    The vertex of $\P_A^\lor$ corresponding to $\F$ is the unique solution $\bv_\F\in\aff(\P_A)$ of the linear system of inequalities
        \begin{equation*}
        A_\F^\top\bx =\bf{1} \quad \text{such that } \overline{A}_\F^\top\bx< 1.
        \end{equation*}
        
    By definition, the polytope $\P_A$ is reflexive if and only if $\bv_\F$ is integral for every facet $\F$ of $\P_A$. 
    
    In this case, we must have $-A_\F^\top \bv_\F = -1$ and
    $ \overline{A}_\F^\top\bv_\F=0$, where the latter follows from $A$ and $\bv_\F$ being integral and $[A|-A]$ containing the negative of each column.
    Summarizing this discussion, it follows that $\P_A$ is reflexive if and only if, for every facet $\F$ of $\P_A$, 
  \begin{equation}\label{eq:reflexive}
        A^\top \bx = \bb^\F,\qquad \text{where} \qquad  \bb_i^{\F}  = \begin{cases}
            1,  & \quad \text{if $F_i^+\in \F$}\\
            -1, & \quad \text{if $F_i^-\in \F$}\\
            0, & \qotherwise,
        \end{cases}
    \end{equation}
    has an integral solution contained in the affine hull of $\P_A$.


    Using this criterion however requires knowing (the vertices of) all facets of $\P_A$, for which our description in \Cref{t:facets} is not precise enough. 
    Therefore a complete characterization of  reflexivity for $\P_A$ seems to be extremely challenging and out of reach.
    In the following we will focus on the case where we have a good understanding of the facets. 
    Namely when $A\in \Z^{m\times s}$ has full rank $s$, which corresponds to $\P_A$ being a crosspolytope.

    For verifying whether \eqref{eq:reflexive} has an integral  solution we will use the following characterizations of solutions to integer equations. For more details on such criteria see~\cite{S1986}.
    
    \begin{theorem}{\cite[Theorem 4.1, p. 51]{S1986}}\label{t:schrijver}
    Let $A\in \Z^{m\times s}$ be an integral matrix and $\bb\in \Z^s$.
    \begin{enumerate}[label = (\arabic*)]
        \item \label{i:schrijver1}\cite[Corollary 4.1a]{S1986}  The system $A^\top \bx=\bb$ has an integral solution $\bx$ if and only if $\by^\top\bb$ is an integer for all $\by \in \Q^s$ such that $A \by$ is integral.   
        \item \label{i:schrijver2} \cite[p. 51]{S1986} 
        If $A$ has 
        rank $s$, then $A^\top \bx=\bb$ has an integral solution $\bx$ if and only if the greatest common divisor of the $s$-minors of $A$ divides all $s$-minors of $[A^\top | \bb]$.
        \end{enumerate}        
    \end{theorem}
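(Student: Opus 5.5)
Both parts follow from reducing to the Smith normal form and then treating a diagonal system, using the decomposition $S^{-1}AT^{-1}=D=\diag(\alpha_1,\dots,\alpha_r)$ recalled in the preliminaries. Here $S\in\Z^{m\times m}$ and $T\in\Z^{s\times s}$ are weakly unimodular, so $A^\top=T^\top D^\top S^\top$. The substitution $\bx'=S^\top\bx$ is a bijection of $\Z^m$, and we set $\bb'=(T^\top)^{-1}\bb\in\Z^s$. Then $A^\top\bx=\bb$ has an integral solution if and only if $D^\top\bx'=\bb'$ has one. Since $D^\top$ is diagonal, this holds if and only if $\alpha_i\mid \bb'_i$ for $1\le i\le r$ and $\bb'_i=0$ for $r<i\le s$. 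The whole proof consists of translating the two stated criteria into this divisibility condition.

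For \ref{i:schrijver1}, necessity is immediate. If $A^\top\bx=\bb$ with $\bx$ integral and $A\by$ is integral, then $\by^\top\bb=\by^\top A^\top\bx=(A\by)^\top\bx\in\Z$. For sufficiency I test with well-chosen rational vectors.
\begin{itemize}
\item For $i\le r$, take $\by=\frac{1}{\alpha_i}T^{-1}\be_i$. Then $A\by=SDT\by=S\be_i$ is integral, and $\by^\top\bb=\frac{1}{\alpha_i}\be_i^\top (T^{-1})^\top\bb=\bb'_i/\alpha_i$. The hypothesis therefore forces $\alpha_i\mid\bb'_i$.
\item For $i>r$, take $\by=tT^{-1}\be_i$ with arbitrary $t\in\Q$. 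Then $A\by=tSD\be_i=\mathbf{0}$, and the hypothesis gives $t\bb'_i\in\Z$ for all $t\in\Q$. Hence $\bb'_i=0$.
\end{itemize}
By the reduction above, an integral solution exists.

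For \ref{i:schrijver2}, we have $r=s$, so the condition reduces to $\alpha_i\mid\bb'_i$ for all $1\le i\le s$. The gcd of the $s$-minors of $A$ is $d_s(A)=\alpha_1\cdots\alpha_s$, by the formula $\alpha_i=d_i(A)/d_{i-1}(A)$. Next I need that, for a fixed integer $d$, the property ``$d$ divides all $s$-minors'' is preserved when we pass from $[A^\top\mid\bb]$ to
\[
[D^\top\mid\bb']=(T^\top)^{-1}[A^\top\mid\bb]\begin{pmatrix}(S^\top)^{-1}&0\\0&1\end{pmatrix}.
\]
This holds by Cauchy--Binet: the $s$-minors of either matrix are integer combinations of the $s$-minors of the other, because both transformations are integral and invertible over $\Z$. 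The nonzero $s$-minors of $[D^\top\mid\bb']$ are, up to sign, $\alpha_1\cdots\alpha_s$ and $\bb'_j\prod_{i\neq j}\alpha_i$ for $1\le j\le s$. So $\alpha_1\cdots\alpha_s$ divides all of them if and only if $\alpha_j\mid\bb'_j$ for every $j$, which is exactly solvability.

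I expect the only point needing care to be the minor-invariance step in \ref{i:schrijver2}. It must be checked for divisibility of every minor by $d_s(A)$, not merely for the gcd of the minors, and Cauchy--Binet is what settles it.
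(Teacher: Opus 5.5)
Your proof is correct. The paper gives no proof of its own here; it imports the result from Schrijver, where it comes from the Hermite normal form. In that argument one first uses the Fredholm alternative to guarantee a rational solution and discards redundant equations. One then brings $A^\top$ by unimodular column operations to the form $[B\;0]$ with $B$ nonsingular and triangular. Testing with the rows of $B^{-1}$ shows that $B^{-1}\bb$ is integral. The minor criterion follows similarly: column operations preserve the maximal minors up to unimodular combinations, and $\lvert\det B\rvert$ is their gcd.

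Your route uses two-sided unimodular transformations (the Smith normal form) instead. This diagonalizes the system completely, so integrality becomes the coordinatewise condition $\alpha_i\mid \bb'_i$. The rank-deficient case is handled by the test vectors $tT^{-1}\be_i$ for $i>r$, which replace the separate Fredholm-alternative step. Part (2) then drops out of the explicit minors of $[D^\top\mid\bb']$. The Hermite normal form route, for its part, needs only one-sided (column) operations.

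Your approach also has a payoff within this paper. Your test vectors $\frac{1}{\alpha_i}T^{-1}\be_i$ are exactly the vectors $\bv_{A,j}$ of Theorem~\ref{t:reflexiveCriterionCrossPolytope}. So your proof of (1) shows directly that these finitely many vectors suffice, without passing through the isomorphism $\Phi_A$.

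One small remark on your closing caveat: it is unnecessary. An integer $d$ divides every $s$-minor if and only if it divides their gcd, so invariance of the gcd under Cauchy--Binet is the same statement.
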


    Note that~\cref{t:schrijver} \ref{i:schrijver2} can be restated in terms of the elementary divisors of $A$. This perspective is a useful variation of~\cite[Lemma 2.11]{OH2014}, which basically says that if all  nonzero  $m$-minors of an $(m\times s)$-matrix  $A$ of rank $m$ are equal to $\pm1$, then the polytope $\P_A$ is reflexive. As we will soon see, the language of elementary divisors allows us to assume that the greatest common divisor  of minors is equal to $1$.

    We will use \Cref{t:schrijver} to investigate reflexivity of $\P_A$ if $A$ has full column rank. We start with the case that the elementary divisors are all equal to $1$. 

      \begin{lemma}\label{t:reflexiveCrossPolytope}
Let $A \in \Z^{m \times s}$ be a matrix  
of rank $s$ with elementary divisors $\alpha_1=\alpha_2=\cdots=\alpha_s=1$. Then $\P_A$ is reflexive.
    \end{lemma}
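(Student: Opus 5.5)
The plan is to use the reflexivity criterion \eqref{eq:reflexive} together with the description of $\P_A$ as a crosspolytope. Since $A$ has rank $s$, \Cref{l:cross_polytope} shows that $\P_A$ is an $s$-dimensional crosspolytope with vertices $\pm F_1,\ldots,\pm F_s$. Its facets are therefore exactly the convex hulls $\conv\{\epsilon_1F_1,\ldots,\epsilon_sF_s\}$ for sign vectors $\epsilon\in\{-1,1\}^s$. For such a facet $\F$ the right-hand side in \eqref{eq:reflexive} is simply $\bb^\F=\epsilon$. So it suffices to show the following: for every $\epsilon\in\{\pm1\}^s$, the system $A^\top\bx=\epsilon$ has an integral solution lying in $\aff(\P_A)=\im_\R A$.

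To get integrality I would use the Smith normal form directly rather than \Cref{t:schrijver}, although \cref{t:schrijver}\ref{i:schrijver2} also works. Write $A=SDT$ with $S,T$ weakly unimodular and $D=\diag(1,\ldots,1)$ padded with zero rows, so that $D=\binom{I_s}{0}$. Then $A^\top\bx=\bb$ becomes $T^\top D^\top S^\top\bx=\bb$. Setting $\by=S^\top\bx$, this reads $D^\top\by=(T^\top)^{-1}\bb$, i.e., the first $s$ coordinates of $\by$ must equal the integral vector $(T^{-1})^\top\bb$, while the rest are free. Choosing the remaining coordinates integrally and putting $\bx=(S^\top)^{-1}\by$ gives an integral solution for every integral $\bb$, in particular for $\bb=\epsilon$. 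Equivalently, in the language of \cref{t:schrijver}\ref{i:schrijver2}, the gcd of the $s$-minors of $A$ is $\alpha_1\cdots\alpha_s=1$, which trivially divides everything.

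The remaining point, which I expect to be the only real subtlety, is the requirement that the solution lie in $\aff(\P_A)=\im_\R A$. When $s<m$, the integral solution above need not lie in the column space, and projecting it orthogonally onto $\im_\R A$ may destroy integrality. I would handle this by noting that reflexivity should be read relative to the lattice $\aff(\P_A)\cap\Z^m$. Two integral solutions differ by an element of $\ker A^\top$, so the functional $\bx\mapsto\bx^\top(\cdot)$ restricted to $\aff(\P_A)$ is integral on that lattice, and its values on all lattice points of $\aff(\P_A)$ are integers. I would make this precise in the coordinates $\by$: in these coordinates $\aff(\P_A)$ is spanned by the first $s$ unit vectors, and the restricted functional is exactly the integral vector $(T^{-1})^\top\epsilon$. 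Hence the polar vertex $\bv_\F$, computed within $\aff(\P_A)$ with its induced lattice, is a lattice point. Since every facet of $\P_A$ is of this form, every vertex of $\P_A^\lor$ is integral, and so $\P_A$ is reflexive.
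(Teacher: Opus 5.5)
Your proof is correct and follows essentially the paper's route. Solving $A^\top\bx=\bb^\F$ integrally for every sign vector via the Smith normal form is the same fact the paper extracts from \cref{t:schrijver}\ref{i:schrijver2}, namely that the gcd of the $s$-minors is $\alpha_1\cdots\alpha_s=1$. Your passage to the coordinates given by $S^{-1}$, in which $\aff(\P_A)\cap\Z^m$ becomes $\Z^s\times\{0\}$, plays the role of the paper's preliminary unimodular reduction to the full-dimensional case. You are also right that for $s<m$ the literal requirement in \eqref{eq:reflexive}, that the integral solution itself lie in $\aff(\P_A)$, is too strong (e.g.\ for $A=(1,1)^\top$), so reflexivity must be read relative to the lattice $\aff(\P_A)\cap\Z^m$, which is what both your argument and the paper's reduction do.
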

    \begin{proof}
    We restrict to the case that $\P_A$ is full-dimensional. 
    Otherwise, as unimodular transformations preserve reflexivity and elementary divisors, we can apply a unimodular transformation $\psi$ to $\P_A$, such that $\psi(\P_A)=\P_B$, where $\P_B$ is a full-dimensional polytope. 
        By~\cref{l:cross_polytope}  $\P_A$ is an $s$-dimensional crosspolytope. Since the elementary divisors are all equal to $1$, it follows that the greatest common divisor of the $s$-minors of $A$ equals $1$. (Note that this is true for the greatest common divisor of the $i$-minors for any $1\leq i\leq s$.) 
        As, trivially, $1$ divides any number,  $1$ divides the $s$-minors of 
         $[A^\top | \bb]$.  By~\cref{t:schrijver} \ref{i:schrijver2}, we conclude that $A^\top \bx=\bb$  has an integral solution for every $\bb\in \Z^{s}$.
        In particular, choosing for $\bb$ the vector $\bb^\F$ (for a facet $\F$ of $\P_A$) defined in \eqref{eq:reflexive}, it follows that for every facet $\F$ of $\P_A$ \eqref{eq:reflexive} has an integral solution. 
    \end{proof}

    We now turn to the general situation that the matrix  $A\in \Z^{m\times s}$ has elementary divisors $1=\alpha_1=\cdots=\alpha_t<\alpha_{t+1}<\cdots <\alpha_r$, where $\alpha_i$ divides $\alpha_{i+1}$.
Moreover, let $S\in \Z^{m\times m}$ and $T\in \Z^{s\times s}$ weakly unimodular such that the SNF of $A$ is given by $D=S^{-1}AT^{-1}$. 
By the assumption on the elementary divisors we have the canonical isomorphism
\begin{equation}\label{eq:isoCylclic}
(\im_\R D\cap \Z^m)/\im_\Z D\cong \underbrace{\{\mathbf{0}\}}_{\in \Z^{t}}\times \Z_{\alpha_{t+1}}\times \cdots \times \Z_{\alpha_r}\times \underbrace{\{\mathbf{0}\}}_{\in \Z^{m-r}}\cong \Z_{\alpha_{t+1}}\times \cdots \times \Z_{\alpha_r}.
\end{equation}

Moreover, as $D$ is the SNF of $A$, there is an isomorphism between $\im_\R D\cap \Z^m$ and  $\im_\R A\cap \Z^m $ which is given by 
\begin{equation}\label{eq:isoLattice}
\textbf{x}=D\cdot \textbf{v}\mapsto S\cdot D\cdot \textbf{v}= A(T^{-1}\textbf{v})=A(T^{-1}\widetilde{\textbf{x}}).
\end{equation}
where $\textbf{v}\in \Z^s$, $\textbf{x}=(x_1,\ldots,x_r,0,\ldots,0)$ and $\widetilde{\textbf{x}}=(x_1/\alpha_1,\ldots,x_{r-1}/\alpha_{r-1},x_{r}/\alpha_r,0\ldots,0$.
Since $T$ is weakly unimodular, we have $T^{-1}\widetilde{\textbf{x}}\in \Z^s$ if and only if $\widetilde{\textbf{x}}\in \Z^s$ if and only if $\textbf{x}\in \im_\Z D$. Consequently, the above isomorphism restricts to an isomorphism from $\im_\Z D$ to $\im_\Z A$.
Combining \eqref{eq:isoCylclic} and \eqref{eq:isoLattice} we have an isomorphism 
\begin{equation}\label{eq:isoCQuotient}
\Phi_A:\;\Z_{\alpha_{t+1}}\times\cdots \times\Z_{\alpha_r}\to (\im_\R A\cap\Z^m)/\im_\Z A,
\end{equation}
where $\Phi_A(\ell_{t+1},\ldots,\ell_{t_r})= S\cdot \left(\ell_{t+1}\cdot \mathbf{e}_{t+1}+\cdots+\ell_r\cdot \mathbf{e}_{r}\right)+\im_\Z A$ for $0\leq \ell_j\leq \alpha_j-1$ for $t+1\leq j\leq r$.  
 Let $\textbf{w}_{A,j}\coloneqq S\mathbf{e}_{t+j}\in \Z^m$ for $1\leq j\leq r-t$. Observe that, for   $1\leq j\leq r-t$
\begin{equation}\label{eq:wAndv}
\Phi_A(\mathbf{e}_j)=\textbf{w}_{A,j}+\im_\Z A=S\mathbf{e}_{t+j}+\im_\Z A=SD\frac{1}{\alpha_{t+j}}\mathbf{e}_{t+j}+\im_\Z A=A(T^{-1}\frac{1}{\alpha_{t+j}}\mathbf{e}_{t+j})+\im_\Z A.
\end{equation}
We note that in \eqref{eq:wAndv}, we use $\mathbf{e}_j$ and $\mathbf{e}_{t+j}$ for standard unit vectors in different ambient spaces. 
We set $\textbf{v}_{A,j}=T^{-1}\frac{1}{\alpha_{t+j}}\mathbf{e}_{t+j}$ for $1\leq j\leq r-t$.

The next statement says that  if $A$ has full column rank, then the reflexivity of $\P_A$ only depends on the signed sums of the entries of the vectors $\textbf{v}_{A,j}$.


    \begin{theorem}\label{t:reflexiveCriterionCrossPolytope}
        Let $A\in \Z^{m\times s}$ be a matrix with elementary divisors $1=\alpha_1=\cdots=\alpha_t<\alpha_{t+1}<\cdots< \alpha_s$. 
        Then $\P_A$ is reflexive if and only if $\bv_{A,j}^\top \bb$ is an integer for all $\bb\in \{-1,1\}^s$ and all $1\leq j\leq s-t$.
    \end{theorem}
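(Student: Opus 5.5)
The plan is to reduce reflexivity to the integral solvability criterion \eqref{eq:reflexive} and then test it with \Cref{t:schrijver}~\ref{i:schrijver1}. Implicitly $A$ has rank $s$ here, since there are $s$ elementary divisors. As in the proof of \Cref{t:reflexiveCrossPolytope}, we also assume $A$ is reduced, so that $\P_A$ is an $s$-dimensional crosspolytope by \Cref{l:cross_polytope}.

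First I identify the facets. A crosspolytope $\conv(\pm F_1,\ldots,\pm F_s)$ with linearly independent $F_i$ has exactly $2^s$ facets, namely $\conv(\epsilon_1F_1,\ldots,\epsilon_sF_s)$ for $\epsilon\in\{-1,1\}^s$. One can see this directly, or from \Cref{t:facets}: $\ker A=0$, so every labeling is a coboundary, and a facet needs $s$ affinely independent vertices with no antipodal pair. So the vectors $\bb^\F$ in \eqref{eq:reflexive} range exactly over $\{-1,1\}^s$. Moreover, because $A^\top$ is surjective onto $\R^s$, any real solution of $A^\top\bx=\bb$ can be projected to $\im_\R A=\aff(\P_A)$. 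An integral solution can also be moved into $\im_\R A\cap\Z^m$: write $\bx=\bx_0+\bx_1$, and replace $\bx$ by a lattice point of $\aff(\P_A)$ with the same pairing against the columns. Here I would use the SNF: with $A=SDT$, the condition becomes $D^\top (S^\top\bx)=T^{-\top}\bb$, and only the first $s$ coordinates of $S^\top\bx$ matter. So the projection question is really about whether $(S^\top \bx)_{1..s}$ can be chosen integral. I expect this bookkeeping, that integrality in $\Z^m$ versus in the dual lattice of $\aff(\P_A)$ is the same, to be the main subtle point, and I would handle it by working entirely after the change of coordinates $S,T$.

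Next I apply \Cref{t:schrijver}~\ref{i:schrijver1}. The system $A^\top\bx=\bb$ is solvable integrally iff $\by^\top\bb\in\Z$ for all $\by\in\Q^s$ with $A\by\in\Z^m$. The set $L=\{\by\in\Q^s: A\by\in\Z^m\}$ is a lattice containing $\Z^s$. Via the isomorphism $\Phi_A$ from \eqref{eq:isoCQuotient} and \eqref{eq:wAndv}, $L/\Z^s\cong(\im_\R A\cap\Z^m)/\im_\Z A$ through $\by\mapsto A\by$, using that $A$ is injective. Hence $L$ is generated by $\Z^s$ together with $\bv_{A,1},\ldots,\bv_{A,s-t}$. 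To verify this, note that for $\by\in L$ we have $D(T\by)=S^{-1}A\by\in\Z^m$, so $(T\by)_i\in\frac{1}{\alpha_i}\Z$. Therefore $T\by$ is an integer combination of $\mathbf{e}_i$ and $\frac{1}{\alpha_{t+j}}\mathbf{e}_{t+j}$, and applying $T^{-1}$, which is unimodular, gives the claim. Since $\by^\top\bb\in\Z$ automatically for $\by\in\Z^s$ and $\bb\in\Z^s$, the condition reduces to $\bv_{A,j}^\top\bb\in\Z$ for all $j$.

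Combining the two steps: $\P_A$ is reflexive iff for every $\bb\in\{-1,1\}^s$ the system has an integral solution in $\aff(\P_A)$. By the above, this holds iff $\bv_{A,j}^\top\bb\in\Z$ for all $\bb\in\{-1,1\}^s$ and $1\leq j\leq s-t$, which is the statement.
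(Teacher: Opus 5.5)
Your main argument matches the paper's. You use \eqref{eq:reflexive} with $\bb$ ranging over $\{-1,1\}^s$, apply \Cref{t:schrijver}~\ref{i:schrijver1}, and identify $L=\{\by\in\Q^s : A\by\in\Z^m\}$ with $\Z^s+\sum_j\Z\bv_{A,j}$. Your direct computation via $D(T\by)=S^{-1}A\by$, which gives $(T\by)_i\in\frac{1}{\alpha_i}\Z$, is correct. It is slightly cleaner than the paper's route through $\Phi_A$ and injectivity of $A$, because it gives both directions at once.

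The flaw is in the affine-hull paragraph. You claim an integral solution of $A^\top\bx=\bb$ can be moved into $\im_\R A\cap\Z^m$, and this is false. For $A=(1,1)^\top$ and $b=1$, the only solution in $\im_\R A$ is $(\tfrac12,\tfrac12)$.

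So your final equivalence, ``$\P_A$ is reflexive iff there is an integral solution in $\aff(\P_A)$'', is wrong as stated. The segment $\conv(\pm(1,1))$ is reflexive in its lattice $\Z(1,1)$, as the theorem predicts (here $t=s=1$), yet no such solution exists. Your chain of equivalences comes out right only because this wrong criterion and the false moving step cancel each other.

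The repair is to delete that paragraph. Reflexivity relative to $\Lambda=\aff(\P_A)\cap\Z^m$ asks for facet functionals in the dual lattice $\Lambda^*$. That dual lattice is the orthogonal projection of $\Z^m$ onto $\aff(\P_A)$, not $\Lambda$ itself. Since $\Lambda$ is saturated, it is a direct summand of $\Z^m$, so every element of $\Lambda^*$ is induced by some $\bx\in\Z^m$.

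The correct condition is therefore just the existence of some $\bx\in\Z^m$ with $A^\top\bx=\bb$, with no constraint on where $\bx$ lies. This is exactly what the paper's proof uses, since it silently drops the ``contained in the affine hull'' phrase of \eqref{eq:reflexive}. It is also what your Schrijver step actually uses.

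Your remark that only $(S^\top\bx)_{1,\dots,s}$ matters is the right form of this observation. It just does not put $\bx$ into $\im_\R A$. Alternatively, you can first pass to the full-dimensional case by a unimodular map, as in the proof of \Cref{t:reflexiveCrossPolytope}, where the issue disappears. Finally, you do not need to assume $A$ is reduced: rank $s$ already forces all $\pm F_i$ to be vertices.
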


\begin{proof}
        By~\cref{l:cross_polytope} we know that $\P_A$ is combinatorially equivalent to a crosspolytope. Combining this fact with \eqref{eq:reflexive} it follows that $\P_A$ is reflexive if and only if $A^\top \bx=\bb$ has an integral  solution for all $\bb\in \{-1,1\}^s$.
        By~\cref{t:schrijver}~\ref{i:schrijver1} this is equivalent to  $\by^\top \bb$ being an integer for every $\by\in \Q^s$ such that $A\by$ is an integral vector. 
        
        Now assume that $\P_A$ is reflexive. Since $\textbf{w}_{A,j}\in \Z^m$ and $A\textbf{v}_{A,j}-\textbf{w}_{A,j}\in \im_\Z A\subseteq \Z^m$ by \eqref{eq:wAndv}, we conclude that $A\bv_{A,j}\in \Z^m$. As $\textbf{v}_{A,j}\in \Q^s$ by definition, the previous paragraph implies that $\textbf{v}_{A,j}^\top\bb\in \Z$ for all $\bb\in \{-1,1\}^s$.

        Conversely, assume that $\textbf{v}_{A,j}^\top\bb\in \Z$ for all $\bb\in \{-1,1,\}^s$ and $1\leq j\leq s-t$. Let $\by\in \Q^s$ such that $A\by\in \Z^m$. If $\by\in \Z^s$, then, trivially,  $\by^\top \bb$ for $\bb\in \{-1,1\}^s$. We hence assume that $\by\in \Q^s\setminus \Z^s$. Since $A$ has rank $s$, multiplication by $A$ is injective and it follows that $A\by \in (\im_\R A\cap\Z^m) \setminus \im_\Z A$. Using \eqref{eq:isoCQuotient}, there exists $(\ell_{t+1},\ldots,\ell_s)\in \Z_{\alpha_{t+1}}\times\cdots \times\Z_{\alpha_s}$ such that 
        \[
       S\cdot \left(\ell_{t+1}\cdot \mathbf{e}_{t+1}+\cdots+\ell_s\cdot \mathbf{e}_{s}\right)-A\by\in \im_\Z A.
       \]
       Hence, using \eqref{eq:wAndv}, there exists $\bz \in \Z^s$ such that
        \[
        A\bz=S\cdot\left(\ell_{t+1}\cdot \mathbf{e}_{t+1}+\cdots+\ell_s\cdot \mathbf{e}_{s}\right)-A\by=\sum_{j=1}^{s-t}\ell_{t+j} \cdot A\cdot \bv_{A,j}-A\by=A\left(\sum_{j=1}^{s-t}\ell_{t+j} \bv_{A,j}-\by\right).
        \]
        Again, using that the mulitplication by $A$ is injective, we conclude that $\bz=\sum_{j=1}^{s-t}\ell_{t+j} \bv_{A,j}-\by$. Since $\bz\in\Z^s$ and $\bv_{A,j}^\top \bb\in \Z$ for all $\bb\in \{-1,1\}^s$ and all $1\leq j\leq s-t$ by assumption, it follows that
    \[
    \by^\top\bb=\sum_{j=1}^{s-t}\ell_{t+j} \bv_{A,j}^\top\bb-\bz^\top\bb\in \Z,
    \]
        which finishes the proof.
%
 %
    \end{proof}

    \begin{remark}\label{rem:OtherVAs}
    We want to emphasize that  the proof of \Cref{t:reflexiveCriterionCrossPolytope} still works if we exchange the $\bw_{A,j}$  ($1\leq j\leq s-t$) by any $\bw_j\in (\im_\R A\cap \Z^m)$ ($1\leq j\leq s-t$) such that $\bw_1+\im_\Z A,\ldots, \bw_{s-t}+\im_\Z A$ generate $(\im_\R A\cap \Z^m)/\im_\Z A$.  In particular, in the statement of \Cref{t:reflexiveCriterionCrossPolytope} we can take $\ell_j \bv_{A,j}$ (instead of $\bv_{A,j}$) if  $1\leq\ell_j\leq \alpha_{t+j}-1$ does not divide $\alpha_{t+j}$ or even any representative within $\im_\Z A+\frac{\ell}{\alpha_{t+j}}T^{-1}\be_S$. This perspective will be helpful when translating \Cref{t:reflexiveCriterionCrossPolytope} into a more topological language.
    \end{remark}

\cref{t:reflexiveCriterionCrossPolytope} gives a criterion for when $\P_A$ is not reflexive. A closer look at its proof reveals that the values of the elementary divisors  can be interpreted as a measure of how \emph{far} $\P_A$ is from being reflexive. More precisely, we have the following statement:
    \begin{corollary}\label{c:multipleReflexive}
         Let $A\in \Z^{m\times s}$ be a matrix with elementary divisors $1=\alpha_1=\cdots=\alpha_t<\alpha_{t+1}<\cdots< \alpha_s$. 
        Then $\alpha_s\P_A^\lor$  is a lattice polytope.%
    \end{corollary}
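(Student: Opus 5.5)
Since the statement concerns $\P_A$ with $A$ of full column rank $s$ (the setting of \Cref{t:reflexiveCriterionCrossPolytope}), I would follow the proof of that theorem but keep track of denominators instead of requiring integrality. By \Cref{l:cross_polytope}, $\P_A$ is an $s$-dimensional crosspolytope whose facets are the simplices $\conv\{b_iF_i\}$ for $\bb\in\{-1,1\}^s$. As in the discussion leading to \eqref{eq:reflexive}, the vertex of $\P_A^\lor$ corresponding to $\F$ is the unique solution $\bv_\F\in\aff(\P_A)=\im_\R A$ of $A^\top\bx=\bb^\F$. Hence $\alpha_s\P_A^\lor$ is a lattice polytope if and only if $A^\top\bx=\alpha_s\bb$ has an integral solution in $\im_\R A$ for every $\bb\in\{-1,1\}^s$. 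If $\P_A$ is not full-dimensional, I would first apply a unimodular transformation, as in the proof of \Cref{t:reflexiveCrossPolytope}, so that we may assume $\im_\R A$ is the whole ambient space, or at least that the integral solution can be projected into $\im_\R A\cap\Z^m$. This reduction is the point I expect to need the most care.

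Next I would apply \Cref{t:schrijver}~\ref{i:schrijver1} with right-hand side $\alpha_s\bb$. It suffices to show that $\by^\top(\alpha_s\bb)\in\Z$ for every $\by\in\Q^s$ with $A\by\in\Z^m$. The key claim is that any such $\by$ satisfies $\alpha_s\by\in\Z^s$. To see this, write $A=SDT$, so $A\by=SD(T\by)$, and $S^{-1}$ integral gives $D(T\by)\in\Z^m$. Because $D=\diag(\alpha_1,\dots,\alpha_s)$ with all $\alpha_i\neq 0$ (full column rank), this means $\alpha_i(T\by)_i\in\Z$ for each $i$. Since each $\alpha_i$ divides $\alpha_s$, we get $\alpha_s T\by\in\Z^s$, and then $\alpha_s\by=T^{-1}(\alpha_sT\by)\in\Z^s$ because $T^{-1}$ is integral.

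Finally, $\by^\top(\alpha_s\bb)=(\alpha_s\by)^\top\bb\in\Z$, so every facet system has an integral solution. These solutions are exactly $\alpha_s\bv_\F$ (after the reduction above), so all vertices of $\alpha_s\P_A^\lor$ are integral. Alternatively, one can phrase the argument with the vectors $\bv_{A,j}=T^{-1}\frac{1}{\alpha_{t+j}}\be_{t+j}$ from \eqref{eq:wAndv}: since $\alpha_s\bv_{A,j}$ is integral, the criterion in the proof of \Cref{t:reflexiveCriterionCrossPolytope}, applied with scaled right-hand side, holds for $\alpha_s\bb$.
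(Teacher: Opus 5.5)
Your proof is correct and is essentially the paper's. Both arguments use the description of the vertices of $\P_A^\lor$ and a unimodular change of coordinates making $\P_A$ full-dimensional (as in the proof of \Cref{t:reflexiveCrossPolytope}) to reduce to integral solvability of $A^\top\bx=\alpha_s\bb$ for all $\bb\in\{-1,1\}^s$, and both settle this with the SNF and $\alpha_i\mid\alpha_s$. The paper solves the transformed system $D^\top\by=\alpha_s(T^{-1})^\top\bb$ explicitly by $\widetilde{\by}=(\alpha_sD^{-1})(T^{-1})^\top\bb$. You instead pass through \Cref{t:schrijver}~\ref{i:schrijver1}, using that $A\by\in\Z^m$ forces $\alpha_s\by\in\Z^s$, which is exactly the key point the paper records right after its proof.

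Do commit to the unimodular reduction and drop the fallback of projecting an integral solution into $\im_\R A\cap\Z^m$, because that fails in general. For example, with $A=(1,1)^\top$ the only solution of $A^\top\bx=1$ in $\im_\R A$ is $(\tfrac{1}{2},\tfrac{1}{2})$, even though $\bx=(1,0)$ is an integral solution. After the reduction no projection is needed: integral solvability of $A^\top\bx=\bb$ over $\Z^m$ is then equivalent to integrality of the corresponding vertex of the polar of the full-dimensional image of $\P_A$.
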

    \begin{proof}
        For $\alpha_s=1$, the statement directly follows from \cref{t:reflexiveCrossPolytope}.
        
        Suppose $\alpha_s>1$. By the same reasoning as in the proof of \Cref{t:reflexiveCriterionCrossPolytope}, we need to show that $A^\top\bx= \alpha_s\cdot \bb$ has an integral solution for every $\bb\in \{-1,1\}^s$. 
        Since $A=SDT$,  this is equivalent to saying that $D^\top S^\top\bx= (T^{-1})^\top \cdot\alpha_s \cdot \bb$ has an integral solution for all $\bb\in \{-1,1\}^s$. 
        Note that since $S$ is weakly unimodular, finding a solution for $D^\top S^\top\bx= (T^{-1})^\top\cdot\alpha_s \cdot \bb$ is equivalent to finding a solution for 
        $D^\top \by= (T^{-1})^\top\cdot  \alpha_s \cdot \bb$, as we can set $\by=(S^{-1})^\top \bx$.
        The solution to the latter system is  $\widetilde{\by} = (\alpha_s\cdot D^{-1}) (T^{-1}) \bb$, where $(\alpha_s\cdot D^{-1})$ is an integer matrix, because $\alpha_i$ divides $\alpha_s$ for all $i$.
        Since $T$ is weakly unimodular, its inverse  $T^{-1}$ is an integral matrix, 
        which implies that  $(\alpha_sD^{-1})(T^{-1})^\top \cdot \bb= \widetilde{\by}$ is integral.
%
    \end{proof}
We want to remark that the key point underlying the proof of~\cref{c:multipleReflexive} is that since $\alpha_s$ is the least common multiple of the elementary divisors of $A$, the $\alpha_s$\textsuperscript{th} multiple of any integral vector in $\im_\R A$ already belongs to $\im_\Z A$.

We finish this subsection by saying a few words about what happens if the polytope $\P_A$ is \textbf{not} a crosspolytope. 
To simplify notation, given any subset $B$ of the columns of $[A|-A]$, we denote by $\widetilde{B}$ the submatrix of $A$ consisting of the columns $F$ such that $\{F,-F\}\cap B\neq \emptyset$. One can think of $\widetilde{B}$ as the  \emph{unsigned} version of $B$. 
We say that a crosspolytope has a \emph{reflexive} facet $F$, if $F$ has a supporting hyperplane of the form $\{\bx\in\R^m~:~\omega^\top \bx= 1\}$ for some $\omega\in \Z^m$. 
    
    The next statement more or less follows by definition.
    \begin{proposition}\label{p:smallerproblem}
    Let $A\in \Z^{m\times s}$ be of rank $r$. The following are equivalent:
    \begin{enumerate}[label = (\arabic*)]
        \item\label{i:smallerproblem1} $\P_A$ is reflexive.
        \item\label{i:smallerproblem2} For every facet $\F$ of $\P_A$, there exists a linearly independent subset $B$ of the vertices of $\F$ with $|B|=r$ such that the vertices of $B$ form a reflexive facet of the $r$-dimensional crosspolytope $\P_{\widetilde{B}}$.
        \item \label{i:smallerproblem3} For every facet $\F$ of $\P_A$, and all linearly independent  subsets $B$ of the vertices of $\F$ with $|B|=r$ the vertices of $B$ form a reflexive facet of the $r$-dimensional crosspolytope $\P_{\widetilde{B}}$.
    \end{enumerate}
    In particular, if for every facet $\F$ of $\P_A$, there exists a linearly independent subset $B$ of the vertices of $\F$ with $|B|=r$ such that the $r$-dimensional crosspolytope $\P_{\widetilde{B}}$ is reflexive, then $\P_A$ is reflexive.
    \end{proposition}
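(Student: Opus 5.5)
The plan is to reduce everything to the facet criterion \eqref{eq:reflexive}: $\P_A$ is reflexive if and only if, for every facet $\F$ of $\P_A$, the system $A^\top\bx=\bb^\F$ has an integral solution in $\aff(\P_A)=\im_\R A$. The point is that, because every facet of $\P_A$ spans a hyperplane of $\aff(\P_A)$, any linearly independent set $B$ of $r$ vertices of $\F$ already determines this hyperplane. Concretely, the unique $\bv_\F\in\im_\R A$ with $\bv^\top F=1$ for all vertices $F$ of $\F$ coincides with the unique $\bv\in\im_\R A=\im_\R\widetilde{B}$ with $\bv^\top F=1$ for all $F\in B$. Here $\im_\R\widetilde{B}=\im_\R A$ because $\widetilde{B}$ has rank $r=\rank A$.

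The first step is to check that $B$ is the vertex set of a facet of the $r$-dimensional crosspolytope $\P_{\widetilde{B}}$. By \Cref{l:cross_polytope}, $\P_{\widetilde{B}}$ is a crosspolytope. Since $B$ contains exactly one of $\pm F$ for each column $F$ of $\widetilde{B}$ (it cannot contain both, as they lie on a common facet not through $\mathbf 0$), $\conv(B)$ is a facet. Moreover, the functional $\bv_\F$ is $1$ on $B$ and $\le 1$ on all of $\P_A\supseteq\P_{\widetilde B}$, so it supports this facet. As a functional on $\aff(\P_{\widetilde B})$ it is the unique such one.

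Next I set up the key observation that integrality is intrinsic. "Reflexive facet" is defined as the existence of some integral $\omega\in\Z^m$ with $\omega^\top F=1$ on the facet and $\le1$ on the polytope. Such an $\omega$ differs from $\bv_\F$ only by a vector orthogonal to $\im_\R A$. So the condition "there is an integral functional on $\Z^m$ restricting to the facet functional" depends only on the restriction of $\bv_\F$ to $\im_\R A$, and this restriction is the same for $\F\subseteq\P_A$ and for $\conv(B)\subseteq\P_{\widetilde B}$. The facet $\F$ of $\P_A$ is therefore reflexive exactly when $\conv(B)$ is a reflexive facet of $\P_{\widetilde B}$. This gives \ref{i:smallerproblem1}$\Leftrightarrow$\ref{i:smallerproblem3} and \ref{i:smallerproblem1}$\Leftrightarrow$\ref{i:smallerproblem2}, using that every facet has at least one such $B$ because it has dimension $r-1$ and does not contain the origin.

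The main subtlety is the notion of reflexivity for non-full-dimensional polytopes. It should be read as "there is an integral $\omega$" (equivalently, an integral solution of $A^\top\bx=\bb^\F$, possibly after projecting to the affine hull as in \eqref{eq:reflexive}). I would phrase both sides in the form "$\widetilde B^\top\bx=\pm\mathbf 1$ (signed according to $B$) has an integral solution $\bx\in\Z^m$". Any such $\bx$ automatically satisfies $A^\top\bx=\bb^\F$: the restriction of $\bx$ to $\im_\R A$ equals that of $\bv_\F$, so $\bx^\top F=\bv_\F^\top F$ for every column $F$. This way no projection issues arise.

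The final "in particular" follows because a reflexive crosspolytope has all facets reflexive, so \ref{i:smallerproblem2} holds.
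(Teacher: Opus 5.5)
Your proof is correct and takes essentially the paper's route: because $B$ is a basis of $\im_\R A=\aff(\P_A)$, any integral $\omega$ that supports $\conv(B)$ in $\P_{\widetilde{B}}$ or $\F$ in $\P_A$ agrees with the facet functional on $\im_\R A$, so the same $\omega$ works on both sides. Your restriction-to-$\im_\R A$ formulation also makes explicit two points the paper leaves implicit: in (2)$\Rightarrow$(1), that $\omega^\top\bx\le 1$ on all of $\P_A$; and in (3)$\Rightarrow$(2), that some such $B$ exists. You are right to read reflexivity as \emph{some $\omega\in\Z^m$}, but drop the parenthetical about projecting to the affine hull: an integral solution lying in $\aff(\P_A)$ is not an equivalent condition, as $A=(1,1)^\top$ shows.
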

Note that the crucial condition in \ref{i:smallerproblem2} and \ref{i:smallerproblem3} is that the polytope $\P_B$ has a specific reflexive facet. Since the matrix $B$ is assumed to have rank $r$, the polytope $\P_B$ has to be a crosspolytope by \Cref{l:cross_polytope}. Moreover, in \ref{i:smallerproblem2}, one does not need to require $B$ to be linearly independent since this is already enforced by $\P_{\widetilde{B}}$ being $r$-dimensional. However, we included it in the statement to make the analogy to \ref{i:smallerproblem3} more apparent.
    
    \begin{proof}
    We first show that \ref{i:smallerproblem1} implies \ref{i:smallerproblem3}. 
    Therefore assume that $\P_A$ is reflexive. 
    
    If $r=s$, then, by \Cref{l:cross_polytope}, $\P_A$ is a crosspolytope itself, and for every facet $\F$ of $\P_A$, the only linearly independent subset $B$ of cardinality $r$ is the set of vertices of $\F$ itself.  The claim is now immediate, since $\P_A=\P_{\widetilde{B}}$ in this case.
    
    Next, suppose $r<s$. Let $\F$ be a facet of $\P_A$. Since, by assumption,  $\P_A$ is reflexive, there exists $\omega\in\Z^m$ such that 
    \[
    \F=\P_A\cap \{\bx\in\R^m~:~\omega^\top \bx= 1\}\qquad \mbox{and} \qquad \P_A\subseteq \{\bx\in\R^m~:~\omega^\top \bx\leq 1\}.
    \]
    Let $B$ be a linearly independent subset of the vertices of $\F$ with $|B|=r$. By \Cref{l:cross_polytope}, $\P_{\widetilde{B}}$ is an $r$-dimensional crosspolytope that is contained in $\P_A$. Moreover, by construction, $\conv(B)$ is a facet of $\P_{\widetilde{B}}$ with supporting hyperplane $\{\bx\in\R^m~:~\omega^\top \bx= 1\}$. 
   
    That \ref{i:smallerproblem3} implies \ref{i:smallerproblem2} is immediate so that is remains to prove that \ref{i:smallerproblem2} implies \ref{i:smallerproblem1}. 
Let $\F$ be a facet of $\P_A$. As \ref{i:smallerproblem2} holds, there exists a subset $B$ of the vertices of $\F$ with $|B|=r$ such that $\P_{\widetilde{B}}$ is an $r$-dimensional crosspolytope with reflexive facet $\conv(B)$. It follows that there exists  $\omega\in\Z^m$ such that 
    \[
    \conv(B)=\P_B\cap \{\bx\in\R^m~:~\omega^\top \bx= 1\}\qquad \mbox{and} \qquad \P_B\subseteq \{\bx\in\R^m~:~\omega^\top \bx\leq 1\}.
    \]
    Since $\dim \F=\dim\conv(B)=r$ and $\conv(B)\subseteq \F$, the affine hulls of $\conv(B)$ and $\F$ coincide. In particular, $\{\bx\in\R^m~:~\omega^\top \bx= 1\}$ is a supporting hyperplane for $\F$. This shows that $\P_A$ is reflexive.
%
%
  %
    %
    \end{proof}

\subsection{Reflexivity of symmetric homology polytopes}
We will now apply the results from the previous subsection to symmetric homology polytopes.

Our main result is the following:

\begin{theorem}\label{t:reflexivitySymHomPol}
   Let $\Delta$ be a $d$-dimensional simplicial complex with  $f_{d}(\Delta)=s$ and $H_d(\Delta; \Z) = 0$. Then $\P_\Delta$ is reflexive if and only if one of the following cases occurs:
   \begin{itemize}
       \item[(1)] $H_{d-1}(\Delta; \Z) = 0$;
       \item[(2)] $d$ is even  and 
       $H_{d-1}(\Delta; \Z) \cong \Z^\ell \oplus \Z_{2}\oplus \cdots \oplus \Z_{2}$ for some $\ell\in \N$;
       \item[(3)] $d$ is odd, 
       $H_{d-1}(\Delta; \Z) \cong \Z^\ell \oplus \Z_{2}\oplus \cdots \oplus \Z_{2}$ for some $\ell\in \N$ and there exist $\bv_{\Delta,1},\ldots,\bv_{\Delta,r} \in \{\pm\frac{1}{2},0\}^s \setminus \{\mathbf{0}\}$ having an even number of nonzero entries, such that $\partial_d \bv_{\Delta,1} +\im_\Z\partial_d, \ldots,$ $\partial_d \bv_{\Delta,r} +\im_\Z\partial_d$ generate the torsion in $H_{d-1}(\Delta; \Z)$.
   \end{itemize}
\end{theorem}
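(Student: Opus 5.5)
Since $H_d(\Delta;\Z)=0$, the matrix $\partial_d$ has full column rank $s$, so by \Cref{c:crosspolytope} the polytope $\P_\Delta$ is an $s$-dimensional crosspolytope. We can therefore apply \Cref{t:reflexiveCriterionCrossPolytope} together with \Cref{rem:OtherVAs}. Reflexivity then becomes the statement that $\by^\top\bb\in\Z$ for every $\bb\in\{-1,1\}^s$ and every $\by\in\Q^s$ with $\partial_d\by\in\Z^m$. By \Cref{t:munkressnf}, the elementary divisors $\alpha_{t+1},\ldots,\alpha_s>1$ are exactly the torsion orders of $H_{d-1}(\Delta;\Z)$. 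Moreover, the classes $\partial_d\by+\im_\Z\partial_d$ with $\by$ as above are precisely the torsion classes. Indeed, a torsion class is $[\bc]$ with $k\bc\in\im_\Z\partial_d$, so $\bc=\partial_d\by$ for a rational $\by$; conversely, any such $\bc$ is a cycle. If $H_{d-1}$ is torsion-free, all $\alpha_i=1$ and \Cref{t:reflexiveCrossPolytope} gives reflexivity, which is case (1). (The free part $\Z^\ell$ plays no role, since it does not affect the elementary divisors.)

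Next we show that reflexivity forces all torsion to be $2$-torsion. Take $\by$ with $\partial_d\by$ integral. Comparing $\bb$ with the vector $\bb'$ obtained by flipping one sign at position $i$, we get $\by^\top\bb-\by^\top\bb'=2y_i\in\Z$. Hence reflexivity forces $\by\in\frac12\Z^s$. Then $2\partial_d\by\in\im_\Z\partial_d$ for all such $\by$, so every torsion element has order dividing $2$. This gives $H_{d-1}\cong\Z^\ell\oplus\Z_2^{k}$. Conversely, in this situation each generator can be represented by a vector $\bv\in\frac12\Z^s$. Reducing modulo $\Z^s$ (allowed by \Cref{rem:OtherVAs}, since subtracting integral vectors changes $\partial_d\bv$ only by an element of $\im_\Z\partial_d$), we may take $\bv\in\{\pm\frac12,0\}^s$. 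For such $\bv$ and any $\bb\in\{\pm1\}^s$, we have $\bv^\top\bb\in\Z$ if and only if the number of nonzero entries of $\bv$ is even. So in general, $\P_\Delta$ is reflexive if and only if the torsion is $2$-elementary and the generators can be chosen with an even-size support. This is exactly condition (3). It remains to see that for $d$ even the parity condition is automatic.

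The key step, and the main obstacle, is this parity claim. If $\bv\in\{\pm\frac12,0\}^s$ and $\partial_d\bv$ is integral, then $\bc=2\bv\in\{\pm1,0\}^s$ satisfies $\partial_d\bc\equiv 0\pmod 2$. Thus the support of $\bc$ is a mod-$2$ $d$-cycle: every ridge lies in an even number of the chosen facets. We would count pairs (ridge, facet containing it) inside the support $\Sigma$ of $\bc$. This count equals $(d+1)|\Sigma|$, and it is also a sum of even numbers, so $(d+1)|\Sigma|$ is even. For $d$ even, $d+1$ is odd, so $|\Sigma|$ is even, and the condition in (3) holds automatically. This yields case (2).

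For $d$ odd the counting gives no constraint, so the condition must be kept explicitly, yielding case (3). Here one must be careful about the "there exist" formulation. Whether one generating set has even supports could depend on the choice of generators, since sums of generators modulo $\im_\Z\partial_d$ change the supports. By \Cref{rem:OtherVAs}, however, it suffices to test any generating set of representatives. Conversely, reflexivity forces every representative in $\{\pm\frac12,0\}^s$ to have even support, by the sign-flip argument applied to $\bb$ versus $-\bb$ restricted to the support. So existence and universality coincide, which justifies the statement as phrased.
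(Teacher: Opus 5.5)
Your proof is correct and follows essentially the same route as the paper. You reduce, via the crosspolytope criterion (\Cref{t:reflexiveCriterionCrossPolytope}, \Cref{t:torsionreflexivitycriterion}), to integrality of $\bv^\top\bb$ for rational preimages of torsion classes; you exclude torsion of order $q\geq 3$ by a single sign flip (\Cref{p:torsionNOTreflexive}); and for even $d$ you use the double count $(d+1)|\Sigma|\equiv 0 \pmod 2$ (\Cref{p:TorsionEvenReflexive}, \Cref{rem:odddimension}). Your sign-flip step, which shows $2y_i\in\Z$ for every rational $\by$ with $\partial_d\by$ integral, and your mod-$2$ phrasing of the double count are slightly cleaner than the paper's versions, since they avoid having to locate an entry with denominator exactly $q$.
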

We want to emphasize, that this theorem, in particular, says that $\P_\Delta$ is never reflexive, if the $(d-1)$\textsuperscript{st} homology group of $\Delta$ has $q$-torsion for some $q\geq 3$.

Its proof will directly follow from Lemmas \ref{t:torsionreflexivitycriterion}, \ref{p:torsionNOTreflexive} and \ref{p:TorsionEvenReflexive} and \Cref{rem:odddimension} below. 

    We first observe that in terms of symmetric homology polytopes, the construction of the $\bv_{A,j}$ in~\cref{t:reflexiveCriterionCrossPolytope} corresponds to taking torsion generators in homology. This interpretation enables us to interpret \Cref{t:reflexiveCrossPolytope} and \Cref{t:reflexiveCriterionCrossPolytope} as follows.

\begin{lemma}\label{t:torsionreflexivitycriterion}
        Let $\Delta$ be a $d$-dimensional simplicial complex with   $f_{d}(\Delta)=s$ and $H_d(\Delta; \Z) = 0$. 
        \begin{enumerate}[label = (\arabic*)]
            \item\label{i:torsionreflexive1} If $H_{d-1}(\Delta,\Z) = 0$, then $\P_\Delta$ is reflexive.
            \item\label{i:torsionreflexive2}  Let $H_{d-1}(\Delta; \Z) \cong \Z^\ell \oplus \Z_{q_1}\oplus \cdots \oplus \Z_{q_r}$ for some $\ell\in \N$, 
            $1< q_1$ and such that $q_i$ divides $q_{i+1}$ for $1\leq i\leq r-1$. 
            Let  $\bv_{\Delta,1},\ldots,\bv_{\Delta,r} \in \Q^s \setminus \Z^s$ be such that $\partial_d \bv_{\Delta,1} +\im_\Z\partial_d, \ldots,\partial_d \bv_{\Delta,r} +\im_\Z\partial_d$  generate the torsion of $H_{d-1}(\Delta; \Z)$. 
 Then $\P_\Delta$ is reflexive if and only if $\bv_{\Delta,j}^\top \bb \in \Z$ for every $\bb \in \{-1, 1\}^s$ and all $1\leq j\leq r$. 
        \end{enumerate}
    \end{lemma}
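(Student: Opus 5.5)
The plan is to reduce both parts to the general results of \Cref{subs:ReflexivityGeneral}, applied to the matrix $A=\partial_d\in\Z^{f_{d-1}(\Delta)\times s}$. Since $H_d(\Delta;\Z)=\ker_\Z\partial_d=0$, the matrix $\partial_d$ has full column rank $s$. By \Cref{l:latticepoints} it is reduced, so \Cref{c:crosspolytope} (equivalently \Cref{l:cross_polytope}) shows that $\P_\Delta$ is an $s$-dimensional crosspolytope. We are therefore exactly in the setting of \Cref{t:reflexiveCrossPolytope} and \Cref{t:reflexiveCriterionCrossPolytope}.

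For \ref{i:torsionreflexive1}: by \Cref{t:munkressnf}, $H_{d-1}(\Delta;\Z)\cong\Z^{\min(f_{d-1},s)-s}\oplus\bigoplus_j\Z_{\alpha_j}$. So $H_{d-1}(\Delta;\Z)=0$ forces every elementary divisor to equal $1$, and \Cref{t:reflexiveCrossPolytope} applies directly. (Only torsion-freeness is actually needed here.)

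For \ref{i:torsionreflexive2}: again by \Cref{t:munkressnf}, the elementary divisors of $\partial_d$ greater than $1$ are exactly $q_1,\dots,q_r$. Thus $\alpha_{t+j}=q_j$ with $t=s-r$, and the quotient $(\im_\R\partial_d\cap\Z^{f_{d-1}})/\im_\Z\partial_d$ is isomorphic via $\Phi_A$ in \eqref{eq:isoCQuotient} to $\Z_{q_1}\times\cdots\times\Z_{q_r}$. I will identify this quotient with the torsion subgroup of $H_{d-1}(\Delta;\Z)=\ker_\Z\partial_{d-1}/\im_\Z\partial_d$. An integral vector in $\im_\R\partial_d$ is a cycle, and some multiple of it lies in $\im_\Z\partial_d$, so it gives a torsion class. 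Conversely, a torsion class $[\bx]$ has $k\bx\in\im_\Z\partial_d\subseteq\im_\R\partial_d$, hence $\bx\in\im_\R\partial_d\cap\Z^{f_{d-1}}$.

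Given $\bv_{\Delta,j}$ as in the statement, the cosets $\partial_d\bv_{\Delta,j}+\im_\Z\partial_d$ generate this quotient, with $\partial_d\bv_{\Delta,j}$ integral. So \Cref{rem:OtherVAs} lets us replace the canonical $\bv_{A,j}$ by the $\bv_{\Delta,j}$. The only point to check is that the proof of \Cref{t:reflexiveCriterionCrossPolytope} uses just two facts: that the $A\bv_j$ are integral, and that their cosets generate the quotient. The equivalence then follows.

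The main obstacle is this last step. The $\bv_{\Delta,j}$ need not be the SNF-derived vectors, and there may be a number $r$ of them that does not match the indexing in \Cref{t:reflexiveCriterionCrossPolytope}. I would redo its argument directly. For necessity, $A\bv_{\Delta,j}\in\Z^m$, and reflexivity via \Cref{t:schrijver}\ref{i:schrijver1} gives $\bv_{\Delta,j}^\top\bb\in\Z$. For sufficiency, take any $\by\in\Q^s$ with $A\by$ integral and write $A\by\equiv\sum_j \ell_jA\bv_{\Delta,j}$ modulo $\im_\Z A$. Injectivity of $A$ then gives $\by=\sum_j\ell_j\bv_{\Delta,j}-\bz$ with $\bz\in\Z^s$, so $\by^\top\bb\in\Z$.
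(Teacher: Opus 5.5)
Your proposal is correct and follows the paper's proof: part (1) goes through \Cref{t:munkressnf} and \Cref{t:reflexiveCrossPolytope}, and part (2) through \Cref{t:reflexiveCriterionCrossPolytope} combined with \Cref{rem:OtherVAs}. Your extra details are exactly what the paper leaves implicit in that remark, namely identifying $(\im_\R\partial_d\cap\Z^{f_{d-1}(\Delta)})/\im_\Z\partial_d$ with the torsion of $H_{d-1}(\Delta;\Z)$ and rerunning the injectivity argument for arbitrary generators $\bv_{\Delta,j}$; your observation that (1) only needs torsion-freeness is also right.
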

    \begin{proof}
        Let $\alpha_1\leq \ldots \leq \alpha_s$ be the elementary divisors of $\partial_d$. Since $H_d(\Delta; \Z) = 0$, the map $\partial_d$ has (full) rank $s$ and therefore we have $\alpha_s \geq 1 $. 

 If $H_{d-1}(\Delta; \Z) = 0$, then~\cref{t:munkressnf} implies that $\alpha_s = 1$ and thus $\P_\Delta$ is reflexive by \cref{t:reflexiveCrossPolytope}. This shows (1). 
            
            We now prove  (2). \cref{t:munkressnf} implies that $\alpha_{s-r+i} = q_i$ for $1\leq i\leq r$.
            Since $\partial_d \bv_{\Delta,1} +\im_\Z\partial_d, \ldots,\partial_d \bv_{\Delta,r} +\im_\Z\partial_d$  generate the torsion in $H_{d-1}(\Delta; \Z)$, the claim follows from \cref{t:reflexiveCriterionCrossPolytope} combined with \Cref{rem:OtherVAs}.
    \end{proof}

We now give explicit examples of how the results in this section can be applied to find families of reflexive (and non-reflexive) polytopes arising from combinatorial topology. More specific statements regarding special classes of simplicial complexes can be found in~\cref{s:examples}.
    
\begin{example}[\emph{Reflexivity and $3$-torsion}]\label{ex:torsion3}
        We consider the triangulation $\M_{\Z_3}^1$ of the Moore space of $\Z_3$ from \cref{ex:moore}.
        We have already seen that it has $H_2(\M_{\Z_3}^1)=0$ and $H_1(\M_{\Z_3}^1)=\Z_3$. 
        Computation shows that, when ordering the coordinates lexicographically, a torsion generator corresponds to
        $$
            \bv_{\partial_2} = \frac{1}{3} (1, 1, 1, 2, 2, 2, 2, 1, 1, 1, 1, 1, 1, 2, 2, 2, 1, 2, 1)^\top.
        $$
        By \cref{t:torsionreflexivitycriterion}, $\P_{\M_{\Z_3}^1}$ is reflexive if and only if $\bv_{\partial_2}^\top \bb\in \Z$ for all $\bb\in\{-1,1\}^{19}$. 
         
        While for example
        $$
            \bv_{\partial_2}^\top\mathbf{1}^\top = 9 \in \Z, 
        $$ 
        changing the last coordinate yields
        $$
            \bv_{\partial_2}^\top (1,\dots, 1,-1)^\top = 9-\frac{2}{3} = \frac{25}{3}\not \in \Z
        $$
         and \cref{t:torsionreflexivitycriterion} hence implies that $\P_{M_3^1}$ is not reflexive.
    \end{example}
    
It turns out that ~\cref{ex:torsion3} is part of a more general phenomenon which also states that in   \Cref{t:torsionreflexivitycriterion} \ref{i:torsionreflexive2} reflexivity can never occur if there is higher torsion. More precisely, the following statement holds:

\begin{lemma}
    \label{p:torsionNOTreflexive}
   Let $\Delta$ be a $d$-dimensional simplicial complex such that $H_d(\Delta; \Z) = 0$. If $\tilde H_{d-1}(\Delta; \Z)$ has $q$-torsion for some $q \geq 3$, then $\P_\Delta$ is not reflexive. 
\end{lemma}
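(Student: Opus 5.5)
We argue by contradiction, combining \Cref{t:torsionreflexivitycriterion}~\ref{i:torsionreflexive2} with \Cref{rem:OtherVAs}. Since $H_d(\Delta;\Z)=0$, the map $\partial_d$ has full column rank $s$, so $\P_\Delta$ is a crosspolytope by \Cref{c:crosspolytope}. By \Cref{t:reflexiveCriterionCrossPolytope} and the remark following it, reflexivity is equivalent to the following: for every $\by\in\Q^s$ with $\partial_d\by\in\Z^{f_{d-1}}$ and every $\bb\in\{-1,1\}^s$, we have $\by^\top\bb\in\Z$. We take this as the working criterion and look for a rational vector $\by$ violating it.

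First we build $\by$ from the torsion. Since $H_{d-1}(\Delta;\Z)$ has $q$-torsion with $q\ge 3$, some elementary divisor satisfies $\alpha>2$ by \Cref{t:munkressnf}. Write $\alpha=\alpha_{t+j}$ and set $\by=\bv_{A,j}=T^{-1}\frac{1}{\alpha}\be_{t+j}$. By \eqref{eq:wAndv}, $\partial_d\by$ is integral. Moreover, $\alpha\by=T^{-1}\be_{t+j}$ is integral, and $\by\notin\Z^s$ because $T^{-1}\be_{t+j}$ is a column of a unimodular matrix and therefore has coprime entries. Each coordinate of $\by$ thus lies in $\frac1\alpha\Z$, and some coordinate is not an integer.

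Next we pick a sign vector on which $\by$ fails. Reflexivity would require $\by^\top\bb\in\Z$ for every $\bb\in\{-1,1\}^s$. Take two sign vectors that differ only in coordinate $i$. Subtracting the two conditions gives $2y_i\in\Z$ for every $i$. Since also $\alpha y_i\in\Z$, each $y_i$ lies in $\frac12\Z\cap\frac1\alpha\Z$.

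The main obstacle is that $\alpha$ could be even, for example $\alpha=4$, in which case $\frac12\Z\cap\frac1\alpha\Z=\frac12\Z$ gives no immediate contradiction. Here we use the freedom of \Cref{rem:OtherVAs}: the integers $\ell\by$ also satisfy the criterion, and in fact every $\by'\in\Q^s$ with $\partial_d\by'$ integral does. We therefore choose $\by$ so that its class $\partial_d\by+\im_\Z\partial_d$ has order exactly $q$ in $(\im_\R\partial_d\cap\Z^{f_{d-1}})/\im_\Z\partial_d$. This is possible because that quotient is the torsion subgroup by \eqref{eq:isoCQuotient}, and it contains an element of order $q$ by assumption.

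The order of the class of $\by$ equals the least $n$ with $n\by\in\Z^s$. This holds because $\partial_d$ is injective: if $n\partial_d\by=\partial_d\bz$ with $\bz$ integral, then $n\by=\bz$. From the sign-vector argument, $2y_i\in\Z$ for all $i$, so $2\by\in\Z^s$ and the order divides $2$. Since $q\ge3$, this is a contradiction, and hence $\P_\Delta$ is not reflexive.
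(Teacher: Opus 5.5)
Your proof is correct and is essentially the paper's argument: assume reflexivity, take a rational $\by$ whose image $\partial_d\by$ represents a torsion class of order $q$, and compare sign vectors that differ in one coordinate to force $2y_i\in\Z$, contradicting $q\geq 3$. Your bookkeeping is slightly more careful than the paper's. You use that the order of $\partial_d\by+\im_\Z\partial_d$ equals the least $n$ with $n\by\in\Z^s$, by injectivity of $\partial_d$; the paper instead claims some coordinate has the form $a/q$ with $\gcd(a,q)=1$, which can fail for composite $q$ (e.g.\ coordinates $\tfrac12$ and $\tfrac13$ when $q=6$). The same order argument also makes your detour for even $\alpha$ unnecessary, since $\bv_{A,j}$ already has order exactly $\alpha$.
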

\begin{proof}
    Let  $f_d(\Delta)=s$. Assume by contradiction that $\P_\Delta$ is reflexive, and let $\bv_\Delta \in \Q^s \setminus \Z^s$  such that $\partial_d \bv_\Delta+\im_{\Z}\partial_d$ generates one component of the $q$-torsion of $H_{d-1}(\Delta; \Z)$. 
    Then by~\cref{t:torsionreflexivitycriterion} \ref{i:torsionreflexive2}, we have $\bv_\Delta^\top\bb \in \Z$ for all $\bb\in \{-1,1\}^s$, In particular, $\bv_\Delta^\top \cdot \mathbf{1}=\sum_{\ell=1}^s(\bv_\Delta)_\ell\in \Z$. Since, by construction, $q \bv_\Delta \in \Z^s$ and $\bv_\Delta\notin\Z^s$, it is easy to see that there exist $1\leq i\leq s$ and $a\in \Z$ coprime to $q$ such that $(\bv_\Delta)_i=\frac{a}{q}$: Indeed, if such $a\in\Z$ did not exist, then we would have $q' \bv_\Delta \in \Z^s$ for some $q' < q$ which implies that $\partial_d\bv_\Delta+\im_\Z\partial_d$ would not generate the $q$-torsion. 
    Note that, by assumption, $\bv_\Delta^\top \bf{1} \in \Z$. 
    Setting $\bb=\mathbf{1}-2\be_i\in \{-1,1\}^s$, we get
    \[
    \bv_\Delta^\top \bb=\bv_\Delta^\top \mathbf{1}-2\cdot\frac{a}{q}\not\in \Z,
    \]
     where we use that $2\cdot\frac{a}{q}\not\in \Z$ as  $a$ and $q$ are coprime and $q \geq 3$.
\end{proof}

In view of~\cref{p:torsionNOTreflexive}, it might seem that given a $d$-dimensional simplicial complex $\Delta$ with torsion in homological degree $d-1$ and trivial top homology, the polytope $\P_\Delta$ is rarely reflexive. The next example shows however that $2$-torsion is not necessarily an obstruction to reflexivity. In particular, the condition in \Cref{t:torsionreflexivitycriterion} \ref{i:torsionreflexive2} might be fulfilled in this situation.

\begin{example}[\emph{Reflexivity and $2$-torsion: not everything fails to be reflexive}]\label{ex:2torsion}
Consider the triangulation $\Delta_{\R\PP^2}$ from \cref{ex:bjorner}. The $2$-torsion generator of $H_1(\Delta_{\R\PP^2};\Z)$ corresponds to the vector $\bv_\Delta = \frac{1}{2}\cdot\mathbf{1}^\top\in \R^{10}$. 
By \cref{t:torsionreflexivitycriterion} \ref{i:torsionreflexive2} the reflexivity of $\P_{\Delta_{\R\PP^2}}$ is equivalent to the statement that $\frac{1}{2}\sum_{i=1}^{10}b_i\in \Z$ for all $\bb\in \{-1,1\}^{10}$. Indeed, since we add an even number of $\pm \frac{1}{2}$, the result will always be an integer. So, despite $H_1(\Delta_{\R\PP^2};\Z)$ having  $2$-torsion, $\P_{\Delta_{\R\PP^2}}$ is reflexive.
\end{example}

Similar to \Cref{ex:torsion3}, the previous example is a special case of the following more general phenomenon which also shows that $2$-torsion might behave completely different than higher torsion.
    
\begin{lemma}\label{p:TorsionEvenReflexive}
    Let $\Delta$ be a $2k$-dimensional simplicial complex such that $H_{2k}(\Delta; \Z) = 0$. If $H_{2k-1}(\Delta; \Z)$ has only $2$-torsion and is free otherwise, then $\P_\Delta$ is reflexive.
\end{lemma}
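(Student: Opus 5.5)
The plan is to apply \Cref{t:torsionreflexivitycriterion}~\ref{i:torsionreflexive2} with a well-chosen set of torsion generators. By \Cref{t:munkressnf}, the hypothesis says that every elementary divisor of $\partial_{2k}$ that is larger than $1$ equals $2$. So every $q_i$ in \Cref{t:torsionreflexivitycriterion} equals $2$. If there is no torsion at all, reflexivity follows from \Cref{t:torsionreflexivitycriterion}~\ref{i:torsionreflexive1} (or \Cref{t:reflexiveCrossPolytope}), so we may assume $r\geq 1$.

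\emph{Normalizing the generators.} Start with any $\bv_{\Delta,1},\ldots,\bv_{\Delta,r}\in\Q^s\setminus\Z^s$ whose classes $\partial_{2k}\bv_{\Delta,j}+\im_\Z\partial_{2k}$ generate the torsion. Each class has order $2$, so $2\partial_{2k}\bv_{\Delta,j}\in\im_\Z\partial_{2k}$. Since $\partial_{2k}$ is injective (because $H_{2k}(\Delta;\Z)=0$), this gives $2\bv_{\Delta,j}\in\Z^s$. Subtracting an integral vector from $\bv_{\Delta,j}$ does not change the class modulo $\im_\Z\partial_{2k}$. We may therefore assume $\bv_{\Delta,j}\in\{0,\tfrac12\}^s\setminus\{\mathbf 0\}$. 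For such a vector and any $\bb\in\{-1,1\}^s$, the product $\bv_{\Delta,j}^\top\bb$ is a sum of $|\supp(\bv_{\Delta,j})|$ terms $\pm\tfrac12$. It is an integer exactly when $|\supp(\bv_{\Delta,j})|$ is even. The whole proof thus reduces to a parity statement about supports.

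\emph{Parity argument (key step).} Set $\bc=2\bv_{\Delta,j}\in\{0,1\}^s$. Since $\partial_{2k}\bv_{\Delta,j}\in\Z^{f_{2k-1}}$, we get $\partial_{2k}\bc\equiv 0 \pmod 2$. Hence $\bc$ is a $\Z_2$-cycle: every ridge of $\Delta$ lies in an even number of the facets in $\supp(\bc)$. Now double count the incident pairs (ridge, facet in $\supp(\bc)$).
\begin{itemize}
\item Counting by facets: each facet has exactly $d+1=2k+1$ ridges, giving $(2k+1)\,|\supp(\bc)|$ pairs.
\item Counting by ridges: each ridge contributes an even number of pairs.
\end{itemize}
So $(2k+1)|\supp(\bc)|$ is even. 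Since $2k+1$ is odd, $|\supp(\bc)|$ is even. Consequently $\bv_{\Delta,j}^\top\bb\in\Z$ for all $\bb\in\{-1,1\}^s$ and all $j$, and \Cref{t:torsionreflexivitycriterion}~\ref{i:torsionreflexive2} gives that $\P_\Delta$ is reflexive.

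The only step needing care is the normalization. One must check that reducing entries modulo $\Z$ preserves the generated subgroup and keeps $\bv_{\Delta,j}\notin\Z^s$. This holds because each class is nonzero, so the reduced vector is nonzero. Once that is in place, the parity count is where the evenness of $d$ is used. It is also exactly where the argument breaks for odd $d$, which explains the extra hypothesis in case (3) of \Cref{t:reflexivitySymHomPol}.
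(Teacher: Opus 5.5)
Your proposal is correct and follows essentially the same route as the paper: normalize the $2$-torsion generators to vectors in $\{0,\tfrac12\}^s$, reduce reflexivity via \Cref{t:torsionreflexivitycriterion}~\ref{i:torsionreflexive2} to the evenness of their supports, and conclude by double counting ridge--facet incidences using that $2k+1$ is odd. Your explicit mod-$2$ formulation is slightly cleaner than the paper's wording. The paper writes the $\tau$-entry of $\partial_{2k}(2\bv_{\Delta,j})$ as an unsigned incidence count, whereas that entry is a signed sum and agrees with the count only modulo $2$, which is all the argument needs.
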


\begin{proof}
    Let $f_d(\Delta)=s$. 
     Let $\bv_{\Delta,1},\ldots,\bv_{\Delta,r} \in \Q^s \setminus \Z^s$  such that $\partial_d \bv_{\Delta,1}+\im_{\Z}\partial_d,\ldots,\partial_d \bv_{\Delta,1}+\im_{\Z}\partial_d$ generate the $2$-torsion of $H_{d-1}(\Delta; \Z)$. Using \eqref{eq:wAndv}, we can assume that $\bv_{\Delta,j}\in \frac{1}{2}\Z^s$ for $1\leq j\leq r$. Moreover, since $\partial_d\bv\in \im_\Z A$ for any $\bv\in \Z^s$, we can even choose $\bv_{\Delta,j}\in \{0,\frac{1}{2}\}^s$ (see also \Cref{rem:OtherVAs}). 
    
    By~\cref{t:torsionreflexivitycriterion} \ref{i:torsionreflexive2}, $\P_\Delta$ is reflexive if and only if $\bv_{\Delta,j}^\top \bb \in \Z$ for every $\bb \in \{-1, 1\}$ and $1\leq j\leq r$. Since  $\bv_{\Delta,j}\in \{0,\frac{1}{2}\}^s$, this is equivalent to showing that the sets $S_j \coloneqq \{1\leq i\leq s \st (\bv_{\Delta,j})_i=\frac{1}{2}\}$ have even cardinality each. Fix $1\leq j\leq r$ and let $\sigma_1,\ldots,\sigma_t$ be the facets of $\Delta$ corresponding to the elements in $S_j$. 

    By definition of $\partial_d$, for a ridge $\tau\in \Delta$, the entry of $\partial_d (2\bv_{\Delta,j})$ corresponding to $\tau$ equals 
    
    $\partial_d (2\bv_{\Delta,j})_\tau=\#\{1\leq i\leq t~:~ \tau\subseteq \sigma_j\}$. Since, by assumption on $\bv_{\Delta,j}$, all entries of $\partial_d (2\bv_{\Delta,j})$ are even,  it follows that, in particular, so is $\partial_d (2\bv_{\Delta,j})_\tau$. 
    In particular, the sum of the entries $ \sum_{\substack{\tau \in \Delta, \\ |\tau| = 2k }} \partial_d (2\bv_{\Delta,j})_\tau$ is even. As every facet of $\Delta$ contains $2k + 1$ ridges, a double counting argument shows that this sum equals $t(2k+1)$.  We hence conclude that  $t$ is even.
\end{proof}

\begin{remark}\label{rem:odddimension}
    Note that the proof of \cref{p:TorsionEvenReflexive} shows that for a $d$-dimensional simplicial complex $\Delta$ with $H_{d}(\Delta;\Z)=0$ and $H_{d-1}(\Delta;\Z)$ having no $q$-torsion for $q\geq 3$, the polytope $\P_\Delta$ is reflexive if and only if the $2$-torsion can be generated by vectors with an even number of nonzero entries that all have absolute value $\frac{1}{2}$. 
The next example from Lutz's Manifold page~\cite{manifoldpage} shows that  for odd dimensional simplicial complexes, this is not always true. 
\end{remark}

    \begin{example}[\emph{A manifold with a non-reflexive symmetric homology polytope}] \label{ex:NonReflexiveManifold}
        Let $\Delta$ be the following minimal triangulation of the \emph{twisted sphere product} $S^2 \btimes S^1 $  (see \cite[Table 5]{Lutz}):
        \begin{align*}     
          \Delta = \tuple{ &1234,1235,1246,
          1257,1268,1278,1345,
          1456,1567,1679,1689,
          1789,2349,2359,2456,\\
          &2459,2567,2678,3458,
          3478,3479,3589,3678,
          3679,3689,4589,4789}.
        \end{align*}

        It can found in the list of $3$-dimensional manifolds on $9$ vertices from~\cite{manifoldpage} under the name \texttt{manifold\_3\_9\_989}. $\Delta$ is a $3$-dimensional pseudomanifold (even a manifold) with $ H_3(\Delta; \Z) = 0$,  $H_2(\Delta; \Z) = \Z_2$ and $f$-vector $(1, 9, 36, 54, 27)$. Setting $\bv_\Delta=\frac{1}{2}\cdot {\bf{1}} \in \Z^{27}$, one can show that  $\partial_2 \bv_\Delta+\im_\Z\partial_3$ generates $H_2(\Delta; \Z)$ (see also \cref{rem:odddimension}).
        In particular, since $
            \bv_\Delta^\top {\bf{1}} = \frac{27}{2} \not \in \Z$, ~\cref{t:torsionreflexivitycriterion} \ref{i:torsionreflexive2} implies that $\P_\Delta$ is not reflexive.
        \end{example}

We end this section with a sufficient criterion for a symmetric homology polytope to be reflexive, if the underlying simplicial complex is allowed to have non-vanishing top homology.

\begin{corollary}\label{c:gen}
    Let $\Delta$ be a $d$-dimensional simplicial complex. If $\P_\Gamma$ is reflexive for every simplicial spanning forest $\Gamma$ of $\Delta$, then $\P_\Delta$ is reflexive.
    In particular, if every simplicial spanning forest $\Gamma$ satisfies one of the conditions in \Cref{t:reflexivitySymHomPol}, then $\P_\Delta$ is reflexive.
\end{corollary}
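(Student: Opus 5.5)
The plan is to apply \Cref{p:smallerproblem}, using the implication \ref{i:smallerproblem2}$\Rightarrow$\ref{i:smallerproblem1}, to $A=\partial_d$. Let $r=\rank\partial_d=f_d(\Delta)-\dim_\Q H_d(\Delta;\Q)$, and fix a facet $\F$ of $\P_\Delta$. By \Cref{c:facettrees}, $\F$ corresponds to a labeling $\mathbf{\ell}$ such that $\Gamma_{\mathbf{\ell}}=\tuple{\sigma_i \st |\ell_i|=1}$ contains a simplicial spanning forest $\Gamma$ of $\Delta$. The vertices of $\F$ are $\ell_iF_i$ with $|\ell_i|=1$. Let $B$ be the set of those vertices $\ell_iF_i$ with $\sigma_i\in\Gamma$. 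Then $|B|=f_d(\Gamma)=r$, and the vectors in $B$ are linearly independent because $H_d(\Gamma;\Z)=0$. Moreover $\widetilde B$ is exactly the top boundary matrix $\partial^\Gamma_d$ of $\Gamma$, with the rows of ridges of $\Delta$; these coincide with the ridges of $\Gamma$, since $\Gamma$ contains all $(d-1)$-faces. Hence $\P_{\widetilde B}=\P_\Gamma$.

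By hypothesis $\P_\Gamma$ is reflexive. Since $H_d(\Gamma;\Z)=0$, \Cref{c:crosspolytope} shows that $\P_\Gamma$ is an $r$-dimensional crosspolytope. Its facets are the sets containing exactly one of $\pm F_i$ for each $\sigma_i\in\Gamma$, and $\conv(B)$ is one of them. Reflexivity of $\P_\Gamma$ means every facet has a supporting hyperplane $\{\omega^\top\bx=1\}$ with integral $\omega$, where $\omega$ lives in the ambient lattice $\Z^{f_{d-1}(\Delta)}$ and not merely in the affine hull. This point needs a short check, and it is the only real subtlety.

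The paper's notion of reflexive for a lower-dimensional polytope, and the criterion \eqref{eq:reflexive}, ask for an integral solution $\bx$ of $A^\top\bx=\bb^\F$. Such an $\bx$ is an integral $\omega$ defining the facet hyperplane within the ambient space. Applying this to $\P_\Gamma$ and the facet $\conv(B)$ gives $\omega\in\Z^{f_{d-1}(\Delta)}$ with $\omega^\top\bv=1$ for all $\bv\in B$ and $\P_\Gamma\subseteq\{\omega^\top\bx\le 1\}$. So $\conv(B)$ is a reflexive facet of $\P_{\widetilde B}$ in the sense used in \Cref{p:smallerproblem}.

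Condition \ref{i:smallerproblem2} therefore holds for every facet $\F$, and \Cref{p:smallerproblem} gives that $\P_\Delta$ is reflexive. The main obstacle is the bookkeeping that identifies $\P_{\widetilde B}$ with $\P_\Gamma$ in the same ambient space. For the ``in particular'' statement, every simplicial spanning forest $\Gamma$ satisfies $H_d(\Gamma;\Z)=0$. Therefore \Cref{t:reflexivitySymHomPol} applies to $\Gamma$, and any of its conditions (1)–(3) yields that $\P_\Gamma$ is reflexive. The first part then finishes the proof.
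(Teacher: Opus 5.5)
Your proposal is correct and takes the same route as the paper. The paper's proof is a one-line appeal to \Cref{t:facets} (in the form of \Cref{c:facettrees}) together with the implication \ref{i:smallerproblem2}$\Rightarrow$\ref{i:smallerproblem1} of \Cref{p:smallerproblem}, plus the observation that spanning forests have vanishing top homology, so \Cref{t:reflexivitySymHomPol} applies to them. Your additional bookkeeping (identifying $\P_{\widetilde B}$ with $\P_\Gamma$ in the same ambient space, and checking that reflexivity of $\P_\Gamma$ yields an integral $\omega\in\Z^{f_{d-1}(\Delta)}$ for the facet $\conv(B)$) is exactly what the paper leaves implicit.
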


\begin{proof}
    The first statement is an immediate consequence of \Cref{t:facets} and \Cref{p:smallerproblem}. For the second statement it suffices to observe that since every simplicial forest $\Gamma$ has vanishing top homology by definition, the polytope $\P_\Gamma$ is a crosspolytope and reflexivity of these is characterized in \Cref{t:reflexivitySymHomPol}.
\end{proof}

\section{Symmetric cohomology polytopes versus symmetric edge polytopes}\label{s:cohomology}


In this section, we turn our attention to symmetric cohomology polytopes. We are interested in the following question: When can a symmetric cohomology polytope be recognized as (generalized) symmetric edge polytope or a symmetric homology polytope? We will provide a complete answer in dimension $ 1$ and a partial answer in dimension $2$. 

The easiest case is that the underlying simplicial complex $\Delta$ is a graph. In this situation, the homology polytope $\P_\Delta$ is the well-known symmetric edge polytope of $\Delta$.
Relying on the fact that the boundary matrix $\partial_1$ is totally unimodular for graphs, using the results from~\cite{DJK2024}, we are able to characterize cohomology polytopes in this case, up to unimodular equivalence.

\begin{proposition}[\emph{The symmetric cohomology polytope of a graph}]\label{p:cohomologygraph}
    Let $n\geq 3$, and let $G$ be a connected graph on $n$ vertices. Then $\P^G$ is unimodularly equivalent to the symmetric edge polytope of a cycle on $n$ vertices.
\end{proposition}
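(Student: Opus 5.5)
The plan is to reduce the statement to \Cref{t:matroid_equivalence}. For a graph $G$ the matrix $\partial_1$ is the incidence matrix of $G$, which is totally unimodular. Its transpose $\partial_1^\top$ is therefore totally unimodular as well. Moreover $\P^G=\P_{\partial_1^\top}$ by definition. For the cycle $C_n$, the symmetric edge polytope is $\P_{C_n}=\P_{\partial_1(C_n)}$, and $\partial_1(C_n)$ is again totally unimodular. So it suffices to show that the matroids $M[\partial_1^\top(G)]$ and $M[\partial_1(C_n)]$ are isomorphic. \Cref{t:matroid_equivalence} then gives $\P^G\cong_u\P_{C_n}$.

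To identify $M[\partial_1^\top(G)]$, note that its ground set is the vertex set $[n]$ of $G$: the columns of $\partial_1^\top$ are the rows of $\partial_1$. A set $S$ of vertices is dependent exactly when there is a nonzero $\mathbf{c}$ supported on $S$ with $\partial_1\mathbf{c}\in$ the kernel of $\partial_1^\top$ restricted appropriately. More concretely, I will use that the row space relations of $\partial_1$ are given by $\ker\partial_1^\top$ acting on the left, i.e.\ by the $0$-cocycles. For connected $G$, $\ker_\R\partial_1^\top$ is spanned by $\mathbf{1}$. Hence the rows of $\partial_1$ satisfy exactly one linear relation up to scaling, namely that they sum to zero, and this relation involves all $n$ rows.

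Consequently every proper subset of rows is linearly independent, while the full set is dependent. The rank is $n-1$, consistent with \Cref{p:basic}, since $\rank\partial_1=n-1$ for connected $G$. Thus $M[\partial_1^\top(G)]$ is the uniform matroid $M_n$ of rank $n-1$ on $n$ elements. By \Cref{ex:cycleMatroid}, this is precisely the graphic matroid of the $n$-cycle $C_n$, i.e.\ $M[\partial_1(C_n)]$; the assumption $n\geq3$ ensures that $C_n$ is a simple graph. Applying \Cref{t:matroid_equivalence} to the two totally unimodular matrices $\partial_1^\top(G)$ and $\partial_1(C_n)$ finishes the argument.

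The only step needing care is the claim that the unique dependency among the rows of $\partial_1$ has full support. This follows from connectivity: any vector $\mathbf{y}$ with $\mathbf{y}^\top\partial_1=0$ must have $y_i=y_j$ for every edge $ij$, so $\mathbf{y}$ is constant. A nonzero such $\mathbf{y}$ therefore has no zero entries, so no proper subset of rows is dependent. I also need the form of \Cref{t:matroid_equivalence} in which the polytopes may live in different ambient dimensions; this is allowed by the paper's definition of unimodular equivalence, which permits embedding into a common space.
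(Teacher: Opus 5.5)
Your proof is correct and takes the same approach as the paper: for connected $G$, $\ker\partial_1^\top$ is spanned by $\mathbf{1}$, so $M[\partial_1^\top]$ is the uniform matroid of rank $n-1$ on $n$ elements, which is the graphic matroid of $C_n$, and \Cref{t:matroid_equivalence} applies because both $\partial_1^\top$ and $\partial_1(C_n)$ are totally unimodular. Your sentence about $\partial_1\mathbf{c}$ lying in $\ker\partial_1^\top$ is garbled, but your later formulation via $\mathbf{y}^\top\partial_1=0$ forcing $\mathbf{y}$ to be constant is the correct one and suffices.
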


\begin{proof}
         For a graph $G$, the boundary map $\partial_1$ is given by  its directed incidence matrix and it is well-known that the latter is a totally unimodular matrix. In particular, so is $\partial_1^\top$.
         By \Cref{p:regulargraphical}, $\partial_1^\top$, thus represents a regular matroid $M[\partial_1^\top]$. 
         Since  the kernel of $\partial_1^\top$ is generated by the vector $\bf{1}$, we conclude that $M[\partial_1^\top]$ has to be the uniform matroid on $n$ elements of rank $n - 1$. In particular, $M[\partial_1^\top]$ is isomorphic to the regular matroid realized by the boundary map of the cycle graph $C_n$ on $n$ vertices, see \Cref{ex:cycleMatroid}. The result then follows directly from~\cref{t:matroid_equivalence}.
\end{proof}

We now turn our attention to $2$-dimensional simplicial complexes that are obtained by coning over a graph. We need some preliminary notions from~\cite{DHK2011}.


A pure $d$-dimensional simplicial complex $\Delta$ is called a \emph{cycle complex} if there exists an ordering  $\sigma_0, \dots, \sigma_{k-1}$ of its facets   such that $\dim(\sigma_i \cap \sigma_{i+1})=d-1$ for all $0\leq i\leq k-1$  and $\sigma_i\cap \sigma_j=\emptyset$ if $j\neq i+1$ (where in both cases indices are taken mod $k$).

Note that any cycle complex is a pseudomanifold (see \Cref{s:prelim}). 
Any orientable cycle complex will be called a \emph{cylinder complex}, and any nonorientable cycle complex will be called  a \emph{\mobius complex}.

It was shown in \cite[Theorem 5.13]{DHK2011} that $2$-dimensional \mobius complexes are the unique obstruction for the second boundary map $\partial_2$ to be a totally unimodular matrix.

\begin{theorem}{\cite[Theorem 5.13]{DHK2011}}\label{t:TUdim2}
    Let $\Delta$ be a $2$-dimensional simplicial complex. Then, $\partial_2$ is totally unimodular if and only if $\Delta$ contains no $2$-dimensional \mobius complex.
\end{theorem}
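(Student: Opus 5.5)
The plan is to prove the two directions separately. For the ``only if'' direction I exhibit an explicit bad minor coming from a M\"obius complex. For the ``if'' direction I start from a minimal non-totally-unimodular square submatrix of $\partial_2$ and extract from it a M\"obius complex inside $\Delta$. Throughout, I view a square submatrix of $\partial_2$ as a choice of edges (rows) and triangles (columns) of $\Delta$. Each column of $\partial_2$ has exactly three nonzero entries, all in $\{\pm1\}$.

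\emph{M\"obius complex $\Rightarrow$ not totally unimodular.} Let $\sigma_0,\dots,\sigma_{k-1}\subseteq\Delta$ be a $2$-dimensional M\"obius complex, and set $e_i=\sigma_i\cap\sigma_{i+1}$ (indices mod $k$). These edges are pairwise distinct, since non-consecutive facets are disjoint. Take the $k\times k$ submatrix $B$ with rows $e_0,\dots,e_{k-1}$ and columns $\sigma_0,\dots,\sigma_{k-1}$. Column $\sigma_i$ has nonzero entries exactly in rows $e_{i-1}$ and $e_i$, so $B$ is a signed cyclic matrix. Expanding gives
\[
\det B=\prod_i B_{e_i,\sigma_i}\pm\prod_i B_{e_{i-1},\sigma_i},
\]
which is $0$ or $\pm 2$. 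Orientability of the cycle complex means that one can rescale the columns by signs so that every row sums to zero, i.e.\ so that $\ker B\neq 0$. Hence non-orientability forces $\det B\neq 0$, so $|\det B|=2$ and $\partial_2$ is not totally unimodular.

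\emph{Not totally unimodular $\Rightarrow$ M\"obius complex.} Choose a square submatrix $B$ of $\partial_2$ that is not totally unimodular but all of whose proper submatrices are. By Camion's characterization of minimally non-totally-unimodular $\{0,\pm1\}$-matrices, every row and every column of $B$ has an even number of nonzero entries, and $|\det B|=2$. Each column has at most $3$ nonzeros and cannot be zero, so each column has exactly $2$. Thus $B$ is a signed ``graph incidence'' matrix with rows as vertices and columns as edges, and every vertex has even degree. Minimality then forces this graph to be a single cycle: otherwise a proper Eulerian sub-structure, or a pendant decomposition, would yield a smaller bad minor. Consequently $B$ is the cyclic matrix above for facets $\sigma_1,\dots,\sigma_k$ with $\sigma_i\cap\sigma_{i+1}\supseteq e_i$; since $e_i$ is an edge, $\sigma_i\cap\sigma_{i+1}=e_i$. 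The condition $|\det B|=2$ is precisely the sign twist, i.e.\ non-orientability. So we obtain a closed walk in the facet-ridge graph along which orientation reverses.

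\emph{Making the cycle a genuine cycle complex.} Among all such twisted closed facet-ridge walks, take one of minimal length $k$. It remains to show that $\sigma_i\cap\sigma_j=\emptyset$ for non-consecutive $i,j$. I expect this to be the main obstacle. If non-consecutive facets share an edge, the walk has a chord; it splits into two shorter closed walks whose twists add mod $2$, so one of them is twisted, contradicting minimality. If they share only a vertex $v$, my first attempt is to use the link of $v$, which is a graph. If $\sigma_i$ and $\sigma_j$ lie in the same connected component of $\mathrm{lk}(v)$, then a path in the link yields a facet-ridge path through facets containing $v$. I would then try to compare twists along the two resulting shorter cycles, again using additivity of the twist mod $2$, and derive a contradiction or a shorter twisted cycle. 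The delicate case is when $\sigma_i$ and $\sigma_j$ lie in different components of $\mathrm{lk}(v)$. Here I would instead argue directly with the minor: replacing the rows and columns near $v$ should give a different minimal bad submatrix, or show that the vertex contact does not affect the determinant computation. If this cannot be arranged, I would fall back on choosing, among all minimal violating submatrices, one whose support minimizes the number of vertex contacts, and argue that a contact always lets one decrease this number.
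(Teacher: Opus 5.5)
The gap is in your last step, and it is not only that the vertex case is left open: the goal you set there cannot be reached. For $k\ge 4$, the facets $\sigma_{i-1}$ and $\sigma_{i+1}$ always share the vertex $e_{i-1}\cap e_i$, because $e_{i-1}\neq e_i$ are two edges of the triangle $\sigma_i$. So a cycle of triangles whose non-consecutive facets are pairwise disjoint has $k\le 3$. Read literally, that definition would already make the statement fail for $\MS$: its $\partial_2$ is not totally unimodular, yet it has no interior vertex and no edge in three triangles. You also use this literal reading in your first part to get distinct $e_i$.

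The condition has to be read on ridges: $\sigma_i\cap\sigma_{i+1}$ is a ridge $e_i$ lying in no other $\sigma_j$ of the sequence, and non-orientability means $\det B\neq 0$ for the resulting cyclic matrix $B$. With that reading your first direction is fine. Your Camion step then already finishes the second direction. In a minimal violator, counting nonzeros shows every row has exactly two, so each $e_i$ lies in no chosen facet other than $\sigma_i,\sigma_{i+1}$, and $|\det B|=2$ is exactly the twist. The whole third paragraph should be dropped.

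Nor can the third paragraph be rescued by aiming at the stronger, pseudomanifold-type notion in which non-consecutive facets share no edge. Your chord argument breaks when the chord ridge $f$ lies in three or more facets of the walk. A twisted piece then has a row with three nonzeros, so it is not a cyclic minor and does not contradict minimality. Moreover, among arbitrary closed facet--ridge walks, three triangles around a common edge are always twisted, although their boundary matrix is totally unimodular.

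In fact that stronger version is false. Take $K=\langle 123,134,124,245,125,156,567,367,237\rangle$. These facets, in this cyclic order and with the ridges $13,14,24,25,15,56,67,37,23$, give a cyclic $9\times 9$ minor of $\partial_2$ with determinant $-2$. However, the facet--ridge graph of $K$ is this $9$-cycle plus the three chords among $123,124,125$, which all contain $12$. Its only chordless cycles are the following:
\begin{itemize}
\item $\{123,134,124\}$ and $\{124,245,125\}$, which are disks;
\item $\{125,156,567,367,237,123\}$, which is an annulus;
\item the three triangles around $12$.
\end{itemize}
None of these is a non-orientable pseudomanifold.
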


A special case, in which \Cref{t:TUdim2} holds, occurs if $\Delta$ is the cone over a graph $G$. Here, we call $\Delta$ the \emph{cone over  $G$ with apex $v$}, if the facets of $\Delta$ are all sets of the form $e\cup \{v\}$ for $e\in E(G)$ and $v\notin V(G)$  a new vertex.

\begin{lemma}\label{l:coneTU}
    Let $G$ be a graph and  let $\Delta$ be a cone over $G$. Then the top boundary map $\partial_2$ of $\Delta$ is totally unimodular.
\end{lemma}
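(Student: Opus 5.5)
Two routes are available, and I would use the direct one, with the other as a fallback. The first route goes through \Cref{t:TUdim2}. It would require showing that a cone over a graph contains no $2$-dimensional \mobius complex. A cycle complex in a cone: every triangle contains the apex $v$, so two consecutive facets $\sigma_i=e_i\cup\{v\}$ and $\sigma_{i+1}$ share an edge through $v$, and non-consecutive facets intersect at least in $v$. The condition $\sigma_i\cap\sigma_j=\emptyset$ for non-consecutive facets then fails as soon as $k\ge 4$. Small cases ($k=3$) need separate checking, so I prefer a direct argument instead.

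\textbf{Direct route: reduce to the incidence matrix of $G$.} Order the vertices so that the apex $v$ is the largest label. A facet $\{a,b,v\}$ with $a<b<v$ has boundary $[bv]-[av]+[ab]$. So the rows of $\partial_2$ split into two blocks.
\begin{itemize}
\item The rows indexed by edges $ab$ of $G$ form a block with exactly one entry $1$ per column, in distinct rows. This block is just a permutation matrix (the identity, after reordering).
\item The rows indexed by cone edges $av$ form the signed incidence matrix of $G$ (up to a global sign), since column $ab$ has $-1$ in row $av$ and $+1$ in row $bv$.
\end{itemize}
Thus $\partial_2$ is, up to row permutation, the matrix $\begin{pmatrix} I \\ N\end{pmatrix}$, where $N=\partial_1(G)$ is totally unimodular.

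\textbf{Concluding total unimodularity.} Stacking an identity matrix onto a totally unimodular matrix preserves total unimodularity. This is a standard fact (e.g.\ \cite{S1986}), and it can be seen by Laplace expansion along the identity rows. Take any square submatrix. Each row taken from the $I$ block has at most one nonzero entry, equal to $1$. Expanding along such a row either gives $0$ or deletes that row and the corresponding column. Repeating, the minor reduces to $\pm$ a minor of $N$, hence it lies in $\{0,\pm1\}$.

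The only real care needed is the sign bookkeeping, in particular choosing the apex label so that the cone-edge rows are exactly $\pm N$. Even that is harmless: multiplying rows by $-1$ preserves total unimodularity. So I do not expect a genuine obstacle.
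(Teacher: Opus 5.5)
Your proof is correct, and it takes a different route from the paper. The paper uses the route you sketch first and then set aside. It observes that every facet of the cone contains the apex $v$, concludes that $\Delta$ contains no $2$-dimensional M\"obius complex, and then invokes the Dey--Hirani--Krishnamoorthy characterization (Theorem~\ref{t:TUdim2}).

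Your direct argument instead does the following:
\begin{itemize}
\item it places the apex last, so that $\partial_2$ becomes $\left(\begin{smallmatrix} I \\ \partial_1(G)\end{smallmatrix}\right)$ after permuting rows;
\item it then uses the elementary fact that appending identity rows to a totally unimodular matrix preserves total unimodularity.
\end{itemize}

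The paper's route is a one-liner, but only because it relies on a deep external theorem. Its brief observation also does not literally cover every case. When the cycle has $k=3$ facets, no two facets are non-consecutive, so the disjointness clause gives nothing. In a cone that complex is the cone over a triangle, a disk, hence orientable; the paper leaves this unstated, and you rightly flagged it.

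Your route is self-contained and elementary, and it does not depend on the exact definition of a cycle complex. It also gives more information. The matrix $\partial_2^\top=[I\mid \partial_1(G)^\top]$ is in standard form, so its dual matroid is represented by $[-\partial_1(G)\mid I]$. That is (up to sign) the incidence matrix of the $1$-skeleton $H$ of $\Delta$ with the apex row deleted. This is exactly the duality the paper uses in Proposition~\ref{p:cohomologycones}.

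One point deserves a slightly more careful statement: the normalization that the apex has the largest label. Relabeling vertices permutes the rows and columns of $\partial_2$, and it also multiplies rows and columns by $\pm1$, not only rows. All of these operations preserve total unimodularity, so the normalization is justified.
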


\begin{proof}
Since $\Delta$ is a $2$-dimensional cone, for any two facets $\sigma_1,\sigma_2$ of $\Delta$, $v\in \sigma_1\cap\sigma_2$.
In particular, $\Delta$ does not contain  a \mobius complex and the claim follows from  ~\cref{t:TUdim2}.
\end{proof}

\cref{l:coneTU} gives us a way of constructing symmetric cohomology polytopes that are \emph{reflexive} but not unimodularly equivalent to any symmetric edge polytope. We record this observation in the following result.

\begin{proposition}\label{p:cohomologycones}
    Let $G$ be a graph and  let $\Delta$ be a cone over $G$.
    Let $H$ be the graph consisting of the vertices and edges of $\Delta$. 
    Then, $\P^\Delta$ is unimodularly equivalent to a symmetric edge polytope of a graph if and only if $H$ is planar. In this case, $\P^\Delta \cong_u \P_{H^*}$, where $H^*$ is the planar dual of $H$. 
\end{proposition}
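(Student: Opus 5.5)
The plan is to reduce everything to matroid statements and then apply \Cref{t:matroid_equivalence}. By \Cref{l:coneTU}, $\partial_2$ is totally unimodular, hence so is $\partial_2^\top$. The symmetric cohomology polytope $\P^\Delta$ is therefore $\P_A$ for the totally unimodular matrix $A=\partial_2^\top$, whose columns are indexed by the edges of $\Delta$, i.e. by the edges of $H$. The first step is to identify the matroid $M[\partial_2^\top]$. A cone is contractible, so $H_2(\Delta;\Z)=0$. \Cref{l:matroid_duality} then gives that $M[\partial_2^\top]$ is dual to $M[\partial_1]$. Here $\partial_1$ is the boundary map of the $1$-skeleton of $\Delta$, which is exactly $H$, so $M[\partial_1]=M(H)$ is the graphic matroid of $H$ and $M[\partial_2^\top]\cong M(H)^\ast$.

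For the direction ``$H$ planar $\Rightarrow$ SEP'': if $H$ is planar, take a planar dual $H^\ast$. It is standard that $M(H)^\ast\cong M(H^\ast)$, with edges of $H$ corresponding to edges of $H^\ast$ (see \cite{Oxley}). The incidence matrix $\partial_1(H^\ast)$ is totally unimodular and represents $M(H^\ast)\cong M[\partial_2^\top]$. \Cref{t:matroid_equivalence} then yields $\P^\Delta\cong_u \P_{H^\ast}$.

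For the converse, suppose $\P^\Delta\cong_u \P_{G'}$ for some graph $G'$. I need to recover the matroid from the polytope up to unimodular equivalence. A unimodular map $\phi$ sends vertices to vertices and commutes with $x\mapsto -x$, since it is linear and both polytopes are centrally symmetric about the origin. It therefore induces a bijection between antipodal vertex pairs that preserves linear dependence among chosen representatives. Linear (in)dependence of columns up to sign is thus an invariant, so the matroid on the antipodal vertex pairs of $\P^\Delta$ is isomorphic to the corresponding matroid of $\P_{G'}$, a restriction of the graphic matroid $M(G')$ (after dropping parallel or loop issues, i.e. simplifying).

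The main obstacle, which needs care, is that $\partial_2^\top$ may have repeated columns coming from free ridges (cf. \Cref{l:latticepoints}). Repeated columns are parallel elements, which are invisible in the polytope. In a cone over $G$, a ridge $\{v,x\}$ with $\deg_G x=1$, or an edge $e$ of $G$ itself, lies in exactly one facet. So the vertex matroid of $\P^\Delta$ is the simplification of $M(H)^\ast$. I would argue that the simplification of $M(H)^\ast$ is graphic if and only if $M(H)^\ast$ is, since adding parallel elements to a graphic matroid keeps it graphic via parallel edges. Then, via Whitney's theorem \cite[Theorem 5.2.2]{Oxley}, $M(H)^\ast$ is graphic if and only if $H$ is planar. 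Hence, if $H$ is not planar, no graph $G'$ can work.
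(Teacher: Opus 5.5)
Your proof is correct and follows the paper's argument: total unimodularity from \Cref{l:coneTU}, the identification $M[\partial_2^\top]\cong M(H)^\ast$ via \Cref{l:matroid_duality}, Whitney's criterion that $M(H)^\ast$ is graphic if and only if $H$ is planar, and \Cref{t:matroid_equivalence} in the planar case. (What the duality really needs is $\im\partial_2=\ker\partial_1$ over $\R$, i.e.\ $H_1(\Delta;\R)=0$, rather than $H_2(\Delta;\Z)=0$; your contractibility remark covers this too.) The one difference is in the converse: instead of citing \cite[Theorem 4.5]{DJK2024}, you prove directly that a linear equivalence preserves the matroid on antipodal vertex pairs. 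You also make explicit what the paper leaves to that citation, namely that the polytope only sees the simplification of $M(H)^\ast$ (parallel columns of $\partial_2^\top$ arise from pairs of free ridges $\{v,x\},\{x,y\}$ with $x$ a leaf of $G$), and this suffices because being graphic is insensitive to parallel elements.
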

For the definition of the planar dual of a planar graph, we refer to \cite{diestel2005graphentheorie}.  

\begin{proof}
    By~\cref{l:coneTU}, the matrix $[\partial_2^\top]$ is totally unimodular and hence  $\P^\Delta$ can be interpreted as the generalized symmetric edge polytope $\P_{M[\partial_2^\top]}$ of the matroid $M[\partial_2^\top]$. By \Cref{l:matroid_duality}, the latter is isomorphic to the matroid $M[\partial_1]^*$. Since, by construction, $\partial_1$ is the incidence matrix of $H$, it follows that $M[\partial_1]$ is the graphic matroid of $H$. 

    Now assume that $H$ is planar. Then, by \cite[Theorem 5.2.2]{Oxley}, the matroid $M[\partial_1]^*$ is graphic as well. In particular, $M[\partial_1]^*$ is isomorphic to the graphic matroid $M[H^*]$ of $H^*$.  Since, by ~\cref{t:matroid_equivalence}, the polytopes  $\P_{M[\partial_2^\top]}$ and $\P_{M[H^*]}$ are unimodularly equivalent, the claim follows.

    Conversely, assume that $H$ is not planar. In this case, again by \cite[Theorem 5.2.2]{Oxley}, the matroid $M[\partial_1]^*$ is not graphic. Since, by \cite[Theorem 4.5 and Remark 3.2]{DJK2024}, the generalized symmetric edge polytope (with respect to a totally unimodular matrix) determines the underlying matroid (up to isomorphism), it follows that $\P_{M[\partial_2^\top]}$ (and hence $\P^\Delta$) is not unimodularly equivalent to any symmetric edge polytope of a graphic matroid. The claim follows.
\end{proof}

    One might ask whether, in the setting of ~\cref{p:cohomologycones}, the symmetric cohomology polytope of a graph is at least combinatorially equivalent to a symmetric edge polytope. 
    The next example shows that the answer to this question is no in general.

\begin{example}[\emph{Combinatorial equivalence also does not need to hold}]
    Let $\Delta$ be the cone over the complete bipartite graph $K_{3,3}$. Since $K_{3,3}$ is not planar, it follows from \cref{p:cohomologycones} that $\P^\Delta$ is not unimodularly equivalent to any symmetric edge polytope of a graph.

    Since $\Delta$ has $15$ edges and every facet contains only one free ridge, \Cref{l:latticepoints} implies that $\P^\Delta$ has  $30$ vertices. Moreover, as $H_2(\Delta; \Z) = 0$, we infer from \Cref{p:basic} that $\P^\Delta$ is $9$-dimensional. If $\P^\Delta$ was combinatorially equivalent to a symmetric edge polytope of a connected graph $G$, then $G$ would need to have $15$ edges and $10$ vertices. One can verify computationally using Sage, that  the $f$-vector of $\P^\Delta$ is not equal to the $f$-vector of any symmetric edge polytope of such a graph. Moreover, as the symmetric edge polytope of a disconnected graph has the same $f$-vector as the one of the $1$-sum of its connected components, it follows that $\P^\Delta$ is not combinatorially equivalent to any symmetric edge polytope.
\end{example}

\section{Symmetric (co)homology polytopes of (pseudo)manifolds and spheres}\label{s:examples}

The focus of this section lies on the  study of symmetric (co)homology polytopes of simplicial pseudomanifolds.
For the symmetric homology polytope of an orientable pseudomanifold with a fixed number of facets, it turns out that~--~up to unimodular equivalence~--~there are only two  such polytopes, depending on whether the pseudomanifold has boundary or not. For the symmetric cohomology polytope of an orientable pseudomanifold, the situation is more complicated: Still we will show, that every such polytope is  unimodularly equivalent to (a projection of) the symmetric edge polytope (of a whiskering) of the facet-ridge graph. 

We begin by stating a topological consequence of~\cref{p:TorsionEvenReflexive}.

\begin{corollary}\label{c:pseudomanifoldReflexive}
       Let $\Delta$ be a non-orientable $2k$-dimensional pseudomanifold without boundary. Then $\P_\Delta$ is reflexive.
 
    \end{corollary}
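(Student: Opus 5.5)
The statement will follow from \Cref{p:TorsionEvenReflexive}. It suffices to check its two hypotheses for a non-orientable closed $2k$-dimensional pseudomanifold $\Delta$: first that $H_{2k}(\Delta;\Z)=0$, and second that $H_{2k-1}(\Delta;\Z)$ is a free group plus copies of $\Z_2$ only.

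The first hypothesis is almost the definition. Non-orientability means $H_{2k}(\Delta;\Z)\neq\Z$. The group $H_{2k}(\Delta;\Z)=\ker_\Z\partial_{2k}$ is free, so I will show it has rank at most one. Take an integral cycle $\bc$. Every ridge lies in exactly two facets, so $\bc$ forces $c_\sigma=\pm c_{\sigma'}$ whenever the facets $\sigma,\sigma'$ share a ridge. Strong connectivity then propagates this relation to all facets, so every cycle is determined by its value on one fixed facet. Hence the kernel has rank $\le 1$. Since it is not $\Z$, it is $0$.

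For the second hypothesis I will use \Cref{t:munkressnf} and show that every elementary divisor of $\partial_{2k}$ lies in $\{1,2\}$. This step carries the real content. The plan is to work with $\Z_2$-coefficients. Because every ridge lies in exactly two facets, the all-ones vector $\mathbf{1}$ satisfies $\partial_{2k}\mathbf{1}\equiv 0 \pmod 2$. The same propagation argument shows $\ker_{\Z_2}\partial_{2k}$ is spanned by $\mathbf{1}$. Over $\Q$ the rank of $\partial_{2k}$ is $s$, the number of facets, while over $\Z_2$ the rank is $s-1$. Therefore exactly one elementary divisor is even, namely $\alpha_s$. The remaining task is to show $\alpha_s=2$ and not a higher power of $2$.

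Here I will use \Cref{t:munkressnf} in a different form: $\alpha_s$ is the order of the torsion element represented by $\partial_{2k}(\tfrac12\mathbf{1})$. To see this, note that $\tfrac12\partial_{2k}\mathbf{1}$ is integral, since each ridge entry is a sum of two $\pm1$'s. It does not lie in $\im_\Z\partial_{2k}$: by injectivity of $\partial_{2k}$ over $\Q$, the only preimage is $\tfrac12\mathbf{1}\notin\Z^s$. So this class has order exactly $2$.

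Now let $\by\in\Q^s$ be any vector with $\partial_{2k}\by$ integral. I will show $2\by\in\Z^s$. Reducing along each ridge, for adjacent facets $\sigma,\sigma'$ we get $y_\sigma\pm y_{\sigma'}\in\Z$. Strong connectivity then gives $y_\sigma\equiv\pm y_{\sigma_0}\pmod{\Z}$ for a fixed facet $\sigma_0$. Choosing the signs as in the propagation, we obtain $\by-y_{\sigma_0}\bu\in\Z^s$ for a sign vector $\bu$. Since $\partial_{2k}\by$ is integral, $y_{\sigma_0}\partial_{2k}\bu$ is integral.

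Non-orientability should give a ridge where $\partial_{2k}\bu$ has entry $\pm2$; otherwise $\bu$ would be an integral cycle. This forces $2y_{\sigma_0}\in\Z$, hence $2\by\in\Z^s$. Consequently $(\im_\R\partial_{2k}\cap\Z^m)/\im_\Z\partial_{2k}$ is killed by $2$, so the torsion of $H_{2k-1}(\Delta;\Z)$ is $\Z_2$. I expect this last bookkeeping, keeping the signs consistent along a spanning tree of the facet-ridge graph, to be the main obstacle. Once it is done, \Cref{p:TorsionEvenReflexive} applies and $\P_\Delta$ is reflexive.
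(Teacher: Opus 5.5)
Your proposal is correct and follows the paper's route. The paper also derives the corollary directly from \Cref{p:TorsionEvenReflexive}; the only difference is that it cites the known homology of closed non-orientable pseudomanifolds, namely $H_{2k}(\Delta;\Z)=0$ and $H_{2k-1}(\Delta;\Z)\cong\Z^\ell\oplus\Z_2$, while you prove both facts yourself by sign propagation along a spanning tree of the facet-ridge graph. That derivation is sound, and your final step (showing $2\by\in\Z^s$ whenever $\partial_{2k}\by$ is integral) already excludes odd torsion, so the mod-$2$ rank count is needed only to see that the $2$-torsion is nontrivial, which the lemma does not require.
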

    \begin{proof}
        By \cite{homologyPseudomanifolds}, we know that  $H_{2k}(\Delta;\Z)=0$ and 
        $\tilde H_{2k-1}(\Delta;\Z)= \Z^\ell \oplus \Z_2$ for some $\ell\in \N$. The result now directly follows from \cref{p:TorsionEvenReflexive}.
    \end{proof}    

The restriction to even dimensional pseudomanifold in \Cref{c:pseudomanifoldReflexive} is indeed necessary, as \Cref{ex:NonReflexiveManifold} shows. 
    In this example, the obstruction to reflexivity is that $\Delta$ has $2$-torsion and that the torsion generator has an an odd number (27) of nonzero coefficients. A natural question that arises from our results is whether the reflexivity of $\P_\Delta$ depends only on the topology of $\Delta$. The next example together with the discussion above shows that this is not the case in general.

    \begin{example}[\emph{Reflexivity is not a topological property}]
        Let $\Delta'$  be the  simplicial complex obtained from $\Delta$ in  \cref{ex:NonReflexiveManifold} by a stellar subdivision of the facet $1234$, i.e., by replacing the facet $1234$ by $123x$, $124x$, $134x$ and $234x$, where $x$ is a new vertex. Note that $\Delta'$ has $30$ facets. Since $\Delta$ is a subdivision of $\Delta'$, both complexes are homeomorphic as topological spaces.
         One can show that the $2$-torsion of $\Delta'$ is generated by $\bv_{\Delta'} = \frac{1}{2} \cdot{\bf{1}} \in \Z^{30}$. Since  $\bv_\Delta^\top \cdot \bb\in \Z$ for all $\bb\in \{-1,1\}^{30}$, it follows from \Cref{l:cross_polytope} that  $\P_{\Delta'}$ is reflexive. 
        This example shows that although we are able to derive topological obstructions for reflexivity, this property does not only  depend on the topology, but also on the number of facets of $\Delta$. 
    \end{example}


    We already know from \Cref{t:tuboundary} that the top boundary map of any orientable pseudomanifold without boundary is totally unimodular. 
    As a consequence, it follows from \Cref{t:TUreflexive} that both, symmetric homology and cohomology polytopes, of orientable pseudomanifolds are reflexive. The next statement shows that in  this situation, these polytopes are even unimodularly equivalent to the symmetric edge polytope of a graph. 

\begin{theorem}\label{t:orientablepseudomanifolds}
    Let $\Delta$ be a $d$-dimensional orientable pseudomanifold without boundary and let $G$ be the facet-ridge graph of $\Delta$. Then 
    \begin{enumerate}[label = (\arabic*)]
        \item\label{i:last1} $\P_\Delta \cong_u \P_{C_s}$, where $f_d(\Delta)=s$, and 
        \item\label{i:last2} $\P^\Delta \cong_u \P_G$.
    \end{enumerate}
\end{theorem}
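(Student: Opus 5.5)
The plan is to reduce both statements to \Cref{t:matroid_equivalence}. For each of the two polytopes I will exhibit a totally unimodular matrix representing the same matroid as the underlying boundary matrix. By \Cref{t:tuboundary}, $\partial_d$ is totally unimodular, and hence so is $\partial_d^\top$. So it only remains to identify the matroids $M[\partial_d]$ and $M[\partial_d^\top]$ with graphic matroids.

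For \ref{i:last1}, $\Delta$ is a closed orientable pseudomanifold, so $H_d(\Delta;\Z)=\Z$. The generating cycle $\ba\in\{-1,1\}^s$ has full support. I will next show that no proper subset of facets supports a $d$-cycle. Let $\bc$ be a $d$-cycle with $\bc_\sigma\neq 0$ for some facet $\sigma$, and let $\tau$ be a ridge of $\sigma$. In a closed pseudomanifold $\tau$ lies in exactly one other facet $\sigma'$, so the $\tau$-coordinate of $\partial_d\bc$ forces $\bc_{\sigma'}=\pm\bc_\sigma\neq0$. Strong connectivity then propagates this to every facet. Hence the only circuit of $M[\partial_d]$ is the full ground set, so $M[\partial_d]\cong M_s$, the cycle matroid of $C_s$ (\Cref{ex:cycleMatroid}). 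The incidence matrix of $C_s$ is totally unimodular, and \Cref{t:matroid_equivalence} gives $\P_\Delta\cong_u\P_{C_s}$.

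For \ref{i:last2}, the facet-ridge graph $G$ has the facets as vertices and, since $\Delta$ is closed, its edges are in bijection with the ridges of $\Delta$. Both $\partial_d^\top$ and the incidence matrix $\partial_1(G)$ therefore have columns indexed by the ridges. The column of $\partial_d^\top$ for a ridge $\tau$ has exactly two nonzero entries, $\pm1$, at the two facets containing $\tau$. Choose the orientation signs given by $\ba$, that is, let $D=\diag(\ba)$. Since $\ba^\top\partial_d^\top=(\partial_d\ba)^\top=0$, the two nonzero entries of every column of $D\partial_d^\top$ are opposite. Thus $D\partial_d^\top$ is exactly the signed incidence matrix of $G$, up to orienting edges, i.e.\ up to multiplying columns by $-1$. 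Multiplying rows and columns by $\pm1$ is a unimodular transformation that preserves the centrally symmetric polytope. This gives $\P^\Delta\cong_u\P_G$ directly, or alternatively via \Cref{t:matroid_equivalence}.

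The main obstacle is the sign bookkeeping in the last step. I must check that every column of $\partial_d^\top$ has precisely two nonzero entries, which uses that there are no free ridges and that each ridge lies in at most two facets. I must also check that orientability yields a coefficient vector in $\{-1,1\}^s$ making the two entries in each column opposite. The propagation argument above settles both points.
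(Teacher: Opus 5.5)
Your proposal is correct and follows essentially the same route as the paper. You identify $M[\partial_d]$ with the cycle matroid of $C_s$ through the full-support $\pm1$ dependency, and you use $\diag(\ba)$ to turn $\partial_d^\top$ into an incidence matrix of the facet-ridge graph before invoking \Cref{t:matroid_equivalence}. Your explicit propagation argument, and your remark that row and column sign flips already give $\P^\Delta\cong_u\P_G$ directly (independently of the orientation of $G$), only make precise what the paper states more briefly.
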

\begin{proof}
Since, by~\cref{t:tuboundary}, the boundary map $\partial_d$  is totally unimodular, $\P_\Delta$ and $\P^\Delta$ can be seen as  generalized symmetric edge polytopes. We prove each statement separately.

 (1) Since $\Delta$ is an orientable pseudomanifold without boundary, we know that $H_d(\Delta; \Z) = \Z$. This implies that there is a (up to scaling) unique linear dependency among the  columns of $\partial_d$. Moreover, since the facet-ridge graph $G$ of $\Delta$ is connected and every ridge lies in exactly two facets, it follows that this linear dependency uses every column. In particular, the matroid represented by $\partial$ is isomorphic to the graphic matroid of $C_s$ and the claim follows from \cref{t:matroid_equivalence}.

 (2)    Let $\sigma_1,\ldots,\sigma_s$ be the facets of $\Delta$. It follows from the proof of \ref{i:last1} that there is a linear dependency involving all columns of $\partial$ using only $+1$ and $-1$ as coefficients. In particular, there exist $\epsilon_i\in \{-1,+1\}$ for $1\leq i\leq s$ such that
        \[
        \varepsilon_1 \partial_d(\sigma_1)+ \cdots+\varepsilon_s \partial_d(\sigma_s) =0.
        \]
         Let $D(\varepsilon)$ be the $(s\times s)$-diagonal matrix with diagonal entries $\varepsilon_1,\ldots,\varepsilon_s$. 
         Since $\Delta$ is a pseudomanifold, every row of $\partial_d$ has exactly two non-zero entries and it follows that $\partial_d \cdot D(\varepsilon)$ equals the transpose of the incidence matrix of $G$, i.e., $D(\varepsilon)\cdot \partial_d^\top$ equals the incidence matrix of $G$. As $D(\varepsilon)$ is weakly unimodular,  $D(\varepsilon)\cdot \partial_d^\top$ and $\partial_d^\top$ define the same matroid. The claim follows from \cref{t:matroid_equivalence}.
\end{proof}

Before stating the analogue of~\cref{t:orientablepseudomanifolds} for orientable pseudomanifolds with boundary, we need to set some notation. Given a pseudomanifold with boundary $\Delta$, its facet-ridge graph $G = G(\Delta)$ and  a subset $A$ of  the vertices of $G$, the \emph{whiskering} of $G$ at $A$, denoted by $w(G, A)$, is the graph obtained by adding a leaf adjacent to each vertex in $A$. If $A$ is the complete set of vertices of $G$, this construction has appeared many times in the literature in commutative algebra~\cite{V1990}, in the study of piecewise-linear spheres~\cite{JTZ2021}, in graph theory~\cite{BC2018} and more.
Moreover, we will need the following result on pseudomanifolds with boundary, which we assume to be well-known but could not find in the literature. We therefore include a full proof.
\begin{lemma}\label{lem:homologyPseudo}
    Let $\Delta$ be a $d$-dimensional pseudomanifold with boundary. Then, $H_d(\Delta; \Z) = 0$.
\end{lemma}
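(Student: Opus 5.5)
We argue directly with top-dimensional cycles. Since $\dim\Delta=d$, we have $H_d(\Delta;\Z)=\ker_\Z\partial_d$. So it suffices to show that every $d$-chain $c=\sum_{j}c_j\sigma_j\in C_d(\Delta;\Z)$ with $\partial_d c=0$ vanishes. The plan is to show first that $c$ vanishes on a facet containing a free ridge. We then propagate this vanishing along the facet-ridge graph, using strong connectivity.

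For the first step, since $\Delta$ has nonempty boundary, there is a free ridge $\tau$. It is contained in exactly one facet, say $\sigma_1$. The row of $\partial_d$ indexed by $\tau$ has a single nonzero entry $\pm1$, located in the column of $\sigma_1$. Hence the $\tau$-coordinate of $\partial_d c$ equals $\pm c_1$, and $\partial_d c=0$ forces $c_1=0$.

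For the propagation step, let $\sigma,\sigma'$ be facets sharing a ridge $\rho$. Because $\Delta$ is a pseudomanifold, $\rho$ lies in at most two facets, so exactly these two. The row of $\partial_d$ indexed by $\rho$ therefore has exactly two nonzero entries, each $\pm1$, in the columns of $\sigma$ and $\sigma'$. The $\rho$-coordinate of $\partial_d c=0$ then reads $\pm c_\sigma\pm c_{\sigma'}=0$, so $|c_\sigma|=|c_{\sigma'}|$. In particular, $c_\sigma=0$ implies $c_{\sigma'}=0$.

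By strong connectivity, every facet is joined to $\sigma_1$ by a chain of facets in which consecutive members share a ridge. Inducting along such a chain gives $c_j=0$ for all $j$. Thus $\ker_\Z\partial_d=0$, and the claim follows. The same argument works over $\Q$, and indeed over any coefficients.

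The only delicate point is the precise row structure of $\partial_d$. A ridge contained in exactly $m$ facets gives a row with exactly $m$ nonzero entries, each $\pm1$, because the coefficient of $\sigma\setminus\{i_k\}$ in $\partial_d\sigma$ is $(-1)^{k+1}$. Purity guarantees that every facet is $d$-dimensional, so each facet has a column in $\partial_d$ and strong connectivity applies to all of them. No orientation argument is needed.
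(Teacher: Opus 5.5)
Your proof is correct and follows essentially the paper's argument, but run directly instead of by contradiction. The paper takes the support $\Gamma$ of a nonzero $d$-cycle and notes that each ridge of $\Gamma$ lies in exactly two facets of $\Gamma$ and in no other facet of $\Delta$; strong connectivity then gives $\Gamma=\Delta$, which contradicts the existence of a free ridge. You instead propagate the vanishing coefficient outward from the facet containing the free ridge, which makes the role of strong connectivity somewhat more explicit than in the paper's write-up.
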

\begin{proof}
    Assume by contradiction that $H_d(\Delta; \Z) \neq0$, i.e., $\Delta$ contains a $d$-homology cycle $\tau$. Let $\Gamma$ be the pure subcomplex of $\Delta$ whose facets are the facets of $\Delta$ appearing in $\tau$. As $\tau$ is a homology cycle, every ridge of $\Gamma$ has to be contained in at least two facets of $\Gamma$. Moreover, since $\Delta$ is a pseudomanifold, it follows that every ridge of $\Gamma$ has to be contained in exactly two facets of $\Gamma$, and there is no other facet of $\Delta$ containing it. In particular, this implies that $\Gamma$ is strongly connected and hence, that $\Gamma=\Delta$. This is a contradiction, since $\Delta$ has boundary.
\end{proof}


The analogous statement of ~\cref{t:orientablepseudomanifolds} for orientable pseudomanifolds with boundary is the following: 
\begin{theorem}\label{t:orientablepseudomanifoldswithboundary}
    Let $\Delta$ be a $d$-dimensional orientable pseudomanifold with boundary  and let $G=(V,E)$ be the facet-ridge graph of $\Delta$. Let $A$ be the set of vertices of $G$ corresponding to facets of $\Delta$ that contain a free  ridge. Then

    \begin{enumerate}[label = (\arabic*)]
        \item\label{i:orientablepseudomanifold1} $\P_\Delta$ is unimodularly equivalent to an $s$-dimensional crosspolytope, where $f_d(\Delta) = s $.
        \item $\P^\Delta \cong_u \pi_V (\P_{w(G, A)})$, where $\pi_V$ is the projection onto the coordinates corresponding to $V$.
    \end{enumerate}
\end{theorem}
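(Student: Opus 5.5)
For part (1), I would first invoke \Cref{lem:homologyPseudo} to get $H_d(\Delta;\Z)=0$. Then $\partial_d$ has full column rank $s$, and by \Cref{l:cross_polytope} (equivalently \Cref{c:crosspolytope}) $\P_\Delta$ is an $s$-dimensional crosspolytope. To upgrade this to unimodular equivalence, I would use \Cref{t:tuboundary}: $\partial_d$ is totally unimodular. Hence $\partial_d$ and the identity matrix $I_s$ are totally unimodular representations of the same matroid, namely the free matroid on $s$ elements, in which every subset is independent. \Cref{t:matroid_equivalence} then gives $\P_\Delta\cong_u \P_{I_s}=\conv(\pm\be_i)$, the standard crosspolytope.

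For part (2), I would mimic the proof of \Cref{t:orientablepseudomanifolds}\ref{i:last2}. Orientability gives $H_d(\Delta,\partial\Delta;\Z)=\Z$. The relative boundary map $\partial_d^\ast$ is obtained from $\partial_d$ by deleting the rows indexed by free ridges. Every remaining row has exactly two nonzero entries, since these ridges lie in exactly two facets. A generator of the relative top homology is a sign vector $\varepsilon\in\{\pm1\}^s$, using all facets by strong connectedness. So, as in the closed case, $D(\varepsilon)(\partial_d^\ast)^\top$ is the incidence matrix of the facet-ridge graph $G$. The rows of $\partial_d$ indexed by free ridges have a single nonzero entry $\pm1$, in the column of the facet containing that ridge. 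After multiplying by $D(\varepsilon)$, such a row becomes $\pm\be_\sigma$ for a facet $\sigma\in A$.

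This is exactly the column of a whisker edge in the incidence matrix of $w(G,A)$ after deleting the coordinate of the new leaf; the sign can be fixed by orienting the whisker. Several free ridges of the same facet give repeated columns. These do not change the polytope, and by \Cref{l:latticepoints} they correspond to a single vertex. Projecting away the leaf coordinates of $w(G,A)$ yields one such column per facet in $A$, so the projection matches the vertex set up to duplicates. Thus $D(\varepsilon)\partial_d^\top$ has the same column set, up to sign and repetition, as $\pi_V$ applied to the incidence matrix of $w(G,A)$. Since $D(\varepsilon)$ is weakly unimodular, $\P^\Delta\cong_u \conv[D(\varepsilon)\partial_d^\top \mid -D(\varepsilon)\partial_d^\top]=\pi_V(\P_{w(G,A)})$, using that a projection commutes with taking convex hulls.

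The main obstacle I expect is checking carefully that orientability in the relative sense really gives a $\pm1$ vector $\varepsilon$ turning every interior row into a difference $\be_\sigma-\be_{\sigma'}$. This needs the argument of \Cref{lem:homologyPseudo} adapted to relative cycles: the support of a relative cycle is closed under adjacency across interior ridges, hence is everything. It also needs that adjacent facets induce opposite orientations on their common ridge, so that coefficients of a relative cycle across an interior ridge propagate with absolute value $1$.
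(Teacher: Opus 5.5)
Your proposal is correct and follows essentially the same route as the paper. Part (1) goes through \Cref{lem:homologyPseudo} and \Cref{c:crosspolytope}, and part (2) uses a sign matrix $D(\varepsilon)$ to turn $(\partial_d^\ast)^\top$ into the incidence matrix of $G$, reading the free-ridge columns as projected whiskers. Your extra steps make explicit points that the paper's proof only asserts: using total unimodularity and \Cref{t:matroid_equivalence} with $I_s$ to obtain the \emph{standard} crosspolytope in (1), justifying that the relative fundamental cycle has $\pm1$ coefficients, and collapsing repeated free-ridge columns so that one whisker per facet in $A$ suffices.
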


\begin{proof}
    Since, by~\cref{t:tuboundary}, the boundary map $\partial_d$ is totally unimodular, we can interpret both $\P_\Delta$ and $\P^\Delta$ as  generalized symmetric edge polytopes.  First note  that,  by~\cref{l:latticepoints}, if a facet of $\Delta$ contains more than one free ridge, only a single free ridge becomes a vertex of $\P^\Delta$.  We prove each statement separately:
    
        (1) Follows directly by combining ~\cref{c:crosspolytope} with \Cref{lem:homologyPseudo}. 
        
        (2) Since $\Delta$ is an orientable pseudomanifold with boundary, we know $H_d(\Delta, \partial \Delta; \Z) = \Z$. In other words, the matrix $\partial_d^\ast$ that represents the top relative boundary map, has a single linear dependency (up to scaling), and the coefficients in this linear dependency are $+1, -1$. The same arguments as in the proof of \Cref{t:orientablepseudomanifolds} \ref{i:last2} applied to $\partial_d^\ast$ instead of $\partial_d$ show that $D(\epsilon)\cdot(\partial_d^\ast)^\top$ equals the incidence matrix of $G$, where $D(\epsilon)$ is a diagonal matrix with $+1$ and $-1$ on the diagonal. 
        %
        Let $\partial'_d$ be the matrix obtained from $D(\varepsilon) \cdot \partial_d^\top$ by adding one new row per boundary ridge, such that the only nonzero entry of each new row is $\pm 1$ in the column associated to the corresponding free ridge, where the sign is the opposite sign of the other nonzero entry in this column.

        By construction, the matrix $\partial'_d$ is the incidence matrix of the graph $w(G, A)$, and in particular, the symmetric cohomology polytope $\P^\Delta$ is obtained from the symmetric edge polytope $\P_{w(G, A)}$ by projecting it to the coordinates not  corresponding to the whiskers.
    \end{proof}

We note that in view of~\cref{t:orientablepseudomanifolds}, ~--~up to unimodular equivalence~--~the symmetric homology polytope of a sphere (or more generally, orientable pseudomanifold without boundary) only depends on the number of facets of the sphere. 
This can be seen as an analogue of~\cite[Theorem 4.6]{DKM2011}, where the authors show that the top critical group of a sphere depends only on the number of facets of the simplicial complex.  Moreover, we want to remark that it can be seen from the proof of \cref{t:orientablepseudomanifolds}~\ref{i:orientablepseudomanifold1}
that the statement is also true for non-orientable pseudomanifolds with boundary.

In what follows we will use the notion of  shellable simplicial complexes without defining it, since we only need it for the next corollary. 
The definition can be found for example in~\cite[Chapter 10]{B1995}.
In 2024, Yang~\cite[Theorem 1.2]{Y2024} showed that if $\Delta_1$ and  $\Delta_2$ are shellable simplicial complexes whose geometric realizations are  homeomorphic to spheres (or \emph{simplical spheres} for short), then $\Delta_1$ and $\Delta_2$ are combinatorial equivalent  if and only if their facet-ridge graphs $G(\Delta_1)$ and $G(\Delta_2)$ are isomorphic. In view of Yang's result and the results in~\cite{DJK2024}, we have the following corollary of~\cref{t:orientablepseudomanifolds}.

\begin{corollary}\label{c:distinguishspheres}
    Let $\Delta_1$ and  $\Delta_2$ be shellable simplicial spheres. Then, $\P^{\Delta_1} \cong_u \P^{\Delta_2}$ if and only if $\Delta_1$ and $\Delta_2$ are combinatorially equivalent if and only if $G(\Delta_1)$ and $G(\Delta_2)$ are isomorphic.
\end{corollary}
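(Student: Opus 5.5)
The plan is to prove the two equivalences as a chain: first $\P^{\Delta_1}\cong_u \P^{\Delta_2}$ if and only if $G(\Delta_1)\cong G(\Delta_2)$, and then use Yang's theorem \cite[Theorem 1.2]{Y2024}, which says that for shellable simplicial spheres combinatorial equivalence is the same as isomorphism of facet-ridge graphs. So everything comes down to the first equivalence.

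A simplicial sphere is a closed pseudomanifold. Every ridge lies in exactly two facets, and shellable spheres are strongly connected. It is also orientable, since $H_d(\Delta_i;\Z)=\Z$. Hence \Cref{t:orientablepseudomanifolds}~\ref{i:last2} applies and gives $\P^{\Delta_i}\cong_u \P_{G(\Delta_i)}$ for $i=1,2$. Because $\cong_u$ is an equivalence relation, we get $\P^{\Delta_1}\cong_u\P^{\Delta_2}$ if and only if $\P_{G(\Delta_1)}\cong_u\P_{G(\Delta_2)}$.

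One direction is immediate. If $G(\Delta_1)\cong G(\Delta_2)$, then the incidence matrices agree after permuting rows and columns, and so do the symmetric edge polytopes.

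For the converse, I will use the result from \cite{DJK2024} already quoted in the proof of \Cref{p:cohomologycones} (\cite[Theorem 4.5 and Remark 3.2]{DJK2024}). It says that the generalized symmetric edge polytope of a totally unimodular representation determines the underlying matroid up to isomorphism. So $\P_{G(\Delta_1)}\cong_u\P_{G(\Delta_2)}$ forces the graphic matroids $M(G(\Delta_1))$ and $M(G(\Delta_2))$ to be isomorphic.

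The main obstacle is getting from isomorphic cycle matroids to isomorphic graphs, since non-isomorphic graphs can share a cycle matroid. I would handle this with Whitney's 2-isomorphism theorem, which gives graph isomorphism for 3-connected graphs. I would first try to show that the facet-ridge graph of a shellable $d$-sphere is $(d+1)$-regular and $(d+1)$-connected, hence 3-connected when $d\ge 2$. If that route fails, the fallback is to check whether Yang's theorem can be stated directly in terms of the cycle matroid of the facet-ridge graph. In dimension $d=1$ the facet-ridge graph is a cycle, so the claim is trivial there.
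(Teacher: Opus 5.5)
Your proposal is essentially the paper's proof. The paper likewise reduces via \Cref{t:orientablepseudomanifolds} to $\P_{G(\Delta_1)}\cong_u\P_{G(\Delta_2)}$, and it gets $3$-connectivity for $d\geq 2$ by citing Klee's theorem \cite{Klee} that facet-ridge graphs of triangulated $d$-spheres are $(d+1)$-connected, so you need neither to prove this nor a fallback. It packages your ``polytope determines the matroid, then Whitney'' step as \cite[Corollary 4.7]{DJK2024}.

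The one spot to tighten is your ``trivial'' dimension-one case, which silently assumes $d_1=d_2$. The paper handles $d_1=1$ by noting that $M(G(\Delta_2))$ would then be the uniform matroid of rank $s-1$ on $s$ elements, which forces $G(\Delta_2)$ to be a cycle. Alternatively, $\P^{\Delta}$ has $(d+1)f_d(\Delta)$ vertices and dimension $f_d(\Delta)-1$, so $d$ is preserved by unimodular equivalence.
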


\begin{proof} 
Due to \cite[Theorem 1.2]{Y2024} we only need to show that $\P^{\Delta_1} \cong_u \P^{\Delta_2}$ if and only if $\Delta_1$ and $\Delta_2$ are combinatorially equivalent.

    Let $d_i=\dim \Delta_i$ for $i=1,2$.

If $\Delta_1$ and $\Delta_2$ are combinatorially equivalent, then their facet-ridge graphs  $G(\Delta_1) $ and  $G(\Delta_2)$ are isomorphic. \cref{t:orientablepseudomanifolds,t:matroid_equivalence} hence imply
    $$
        \P^{\Delta_1} \cong_u \P_{G(\Delta_1)} \cong_u \P_{G(\Delta_2)} \cong_u \P^{\Delta_2}.
    $$
    
    Conversely, if $\P^{\Delta_1} \cong_u \P^{\Delta_2}$, then 
    \cref{t:orientablepseudomanifolds}  directly yields that $\P_{G(\Delta_1)} \cong_u \P_{G(\Delta_2)}$. We distinguish the following two cases:
    
    {\sf Case 1:  $d_i\geq 2$ for $i=1,2$.}
    Since the facet-ridge graph of every triangulation of a $d$-sphere is $(d+1)$-connected~\cite[5.Theorem]{Klee},   $G(\Delta_1)$ and $G(\Delta_2)$ are 
    $3$-connected and the result follows from~\cite[Corollary 4.7]{DJK2024} and \cite[Theorem 1.2]{Y2024}.
     
        {\sf Case 2: $d_1 = 1$.} In this case, $\Delta_1$ and, consequently, $G(\Delta_1)$ are cycles of the same length $s$.
        Hence, the graphic matroid of $G(\Delta_1)$ is the uniform matroid of rank $s-1$ on $s$ elements. \cite[Theorem 4.6]{DJK2024} implies that $G(\Delta_1)$ and $G(\Delta_2)$ give rise to isomorphic matroids.
        Since the cycle graph on $s$ edges is the unique graph whose graphic matroid is the uniform matroid of rank $s-1$ on $s$ elements, we conclude that also $G(\Delta_2)$ is an $s$-cycle, and so is $\Delta_2$. The claim follows. In particular, we also have $d_2=1$ and there is no other case to consider.
\end{proof} 

In view of ~\cref{c:distinguishspheres} one might ask whether coarser invariants of the cohomology polytope of a shellable sphere (such as its $f$-vector or $h^\ast$-vector) already determine its unimodular type. This would also mean that the combinatorial type of the sphere would be determined by these invariants. The next example shows that, not much surprisingly, this is not the case.

\begin{example}\label{ex:hstarvectorbad}
    Consider the two simplicial $2$-spheres $\Delta_1$ and $\Delta_2$ defined as follows:
    \begin{align*}
        \Delta_1 =& \tuple{123,125,136,159,168,189, 234,245,347,367,459,478,489,678} \qand \\
        \Delta_2 =& \tuple{123,124,135,145,237,248,279,289,356,367,456,467,478,789}.
    \end{align*}
   On the one hand, a Sage~\cite{sagemath} computation shows that the $h^\ast$-vector of both cohomology polytopes equals
    $$
        (1, 29, 384, 3016, 14955, 45327, 80048, 80048, 45327, 14955, 3016, 384, 29, 1).
    $$
    On the other hand, it is easy to see that $\Delta_1$ and $\Delta_2$ have non-isomorphic facet-ridge graphs (e.g., $G(\Delta_2)$ contains a triangle, whereas $G(\Delta_1)$ does not), see \cref{fig:graphsHstarvectorbad}) and, in particular, $\Delta_1$ and $\Delta_2$ are not combinatorially equivalent. \Cref{c:distinguishspheres} implies that $\P^{\Delta_1}$ and $\P^{\Delta_2}$ are not unimodularly equivalent. This example shows that  the $h^\ast$-vector of the cohomology polytope of a shellable sphere does not determine the polytope up to unimodular equivalence.
    
    \begin{figure}[!h]
        \centering
        \begin{subfigure}{0.4\textwidth}
        \scalebox{0.5}{\begin{tikzpicture}[every node/.style={circle, draw, fill=red!20, minimum size=20pt, inner sep=0pt}]

\node (123) at (0,-0.7) {123};
\node (125) at (-2.3,-1) {125};
\node (245) at (-1.5,-2.5) {245};
\node (234) at (1.5,-2) {234};
\node (459) at (-1.5,0.7) {459};
\node (159) at (-3,1.2) {159};
\node (189) at (-1.5,3) {189};
\node (489) at (-0.3,2.2) {489};
\node (168) at (1,3.5) {168};
\node (678) at (2.5,3.7) {678};
\node (478) at (2,2) {478};
\node (136) at (0.9,1.2) {136};
\node (367) at (3.2,1.4) {367};
\node (347) at (2.4,-0.5) {347};

\draw (123) -- (125);
\draw (123) -- (136);
\draw (123) -- (234);
\draw (125) -- (245);
\draw (245) -- (234);
\draw (459) -- (159);
\draw (459) -- (489);
\draw (159) -- (189);
\draw (189) -- (489);
\draw (189) -- (168);
\draw (489) -- (478);
\draw (168) -- (678);
\draw (168) -- (136);
\draw (678) -- (478);
\draw (678) -- (367);
\draw (478) --(347);
\draw (367) -- (347);
\draw (347) -- (234);
\draw (367) -- (136);
\draw (159) -- (125);
\draw (459) -- (245);

\end{tikzpicture}}    
        \end{subfigure}
        \begin{subfigure}{0.4\textwidth}
        \scalebox{0.5}{\begin{tikzpicture}[every node/.style={circle, draw, fill=red!20, minimum size=20pt, inner sep=0pt}, scale=1]
\node (478) at (3.88,5.19) {478};
\node (789) at (1.00,6.50) {789};
\node (279) at (1.51,4.75) {279};
\node (145) at (9.49,4.56) {145};
\node (356) at (7.75,0.00) {356};
\node (367) at (4.75,0.59) {367};
\node (467) at (4.89,2.90) {467};
\node (123) at (6.59,4.35) {123};
\node (248) at (5.27,7.44) {248};
\node (237) at (3.09,3.24) {237};
\node (289) at (2.27,7.65) {289};
\node (135) at (9.19,2.58) {135};
\node (124) at (7.39,6.91) {124};
\node (456) at (7.94,1.92) {456};
\draw (478) -- (789);
\draw (478) -- (467);
\draw (478) -- (248);
\draw (789) -- (279);
\draw (789) -- (289);
\draw (279) -- (237);
\draw (279) -- (289);
\draw (145) -- (135);
\draw (145) -- (124);
\draw (145) -- (456);
\draw (356) -- (367);
\draw (356) -- (135);
\draw (356) -- (456);
\draw (367) -- (467);
\draw (367) -- (237);
\draw (467) -- (456);
\draw (123) -- (237);
\draw (123) -- (135);
\draw (123) -- (124);
\draw (248) -- (289);
\draw (248) -- (124);
\end{tikzpicture}}
        \end{subfigure}
        \caption{The facet ridge graphs of $\Delta_1$ and $\Delta_2$  from~\cref{ex:hstarvectorbad}}
        \label{fig:graphsHstarvectorbad}
    \end{figure}
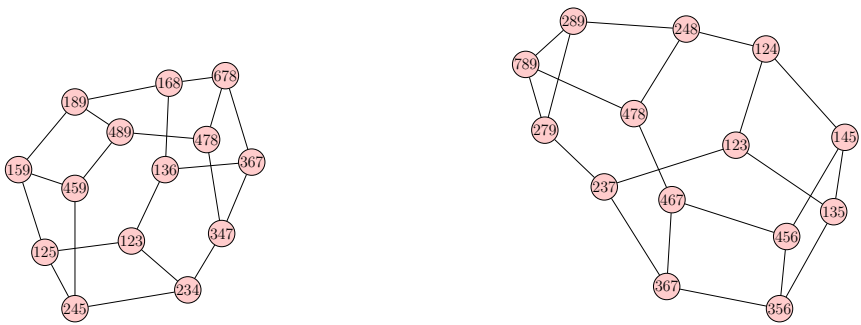
\end{example}

\section{Further examples and directions}
\label{s:questions}

Throughout this paper, we introduced and studied polytopes arising from the boundary and coboundary maps of a simplicial complex $\Delta$. In view of most of our results  (with the exception of~\cref{p:cohomologygraph}) one might get the impression  that  symmetric cohomology polytopes  are more complicated than their homological counterpart. However, the next example shows that both types of polytopes might have nicer properties than their counterpart in some examplea.

\begin{example}{}
    \begin{itemize}
        \item[(1)] Consider the simplicial complexes $\Delta_{\R\PP^2}$ and $\B$ from example \Cref{ex:bjorner}. 
    One can easily verify that neither $\P_{\R\PP^2}$ nor $\P^{\R\PP^2}$ are IDP: For $\P_{\R\PP^2}$, this follows from  \Cref{ex:projectiveplanenotidp}); for $\P^{\R\PP^2}$, one can verify that the $h^\ast$-polynomial differs from the numerator polynomial of the Hilbert series of the toric ideal.
     
    Moreover, whereas the symmetric homology polytope $\P_{\R\PP^2}$ is reflexive by \cref{p:TorsionEvenReflexive}, one can verify that its cohomological counterpart $\P^{\R\PP^2}$ is not reflexive (e.g., by checking that the $h^\ast$-vector is not symmetric). 
So, this is an example, for which the homology polytope is better behaved.
    
    \item[(2)] For the simplicial complex $\B$,  in \cref{ex:bjornernotidp}, we saw that $\P_{\B}$ is reflexive but not IDP.
    In contrast, $\P^{\B}$ is both reflexive and IDP, and further its $h^\ast$-vector equals the Hilbert series of the toric ideal of $\P^{\R\PP^2}$. This provides an example that the cohomology polytope has nicer properties.  
    \end{itemize}
   
\end{example}

If $\Delta$ has vanishing top homology, the only obstruction to the reflexivity of symmetric homology polytopes that we have seen is higher torsion. The next example shows that, for simplicial complexes with non-trivial top homology, even simplicial complexes with very good topological (and combinatorial) properties can fail to be reflexive.

\begin{example}[\emph{Large complexes failing reflexivity}]\label{ex:largecomplex}
    Let $\Delta$ be the $3$-skeleton of the $6$-dimensional simplex, i.e., 
    $$
        \Delta = \tuple{A \st A \subseteq [7];\; |A|=4}.
    $$
    One can easily verify that $\Delta$ does not have torsion. Hence there is no obvious obstruction for  $\P_\Delta$ to be non-reflexive. However, computing the $h^\ast$-polynomial we get
    \begin{multline*}
        1 + 50t + 1240t^2 + 19930t^3 + 226115t^4 + 1863268t^5 + 11080490t^6 + {\color{red} 46179088t^7} + \\ {\color{red} 130616924t^8 + 245464888t^9} + 303300596t^{10} + {\color{red} 245633728t^{11} + 130718060t^{12} + 46194712t^{13}} + \\11080490t^{14} +  1863268t^{15} + 226115t^{16} + 19930t^{17} + 1240t^{18} + 50t^{19} + t^{20},
    \end{multline*}
    which is not symmetric. This shows that $\P_\Delta$ is not reflexive.
\end{example}

As a natural extension of \Cref{p:smallerproblem} one can ask the following:
\begin{question}\label{q:reflexiveIFF}
    Let $\Delta$ be a simplicial complex such that $\P_\Delta$ is reflexive. Is it true that $\P_{\Gamma}$ is reflexive for any subcomplex $\Gamma$ with $\dim\Delta=\dim\Gamma$?
\end{question}
We note that \cref{p:smallerproblem} already tells us that the converse is true.
Though we could imagine that arbitrary subcomplexes might have torsion and, hence, the corresponding symmetric homology polytopes might not be reflexive, we could not find such an example. A positive answer to \Cref{q:reflexiveIFF} would allow to check for reflexivity without knowing the facets of $\P_\Delta$.

It is a wide open conjecture \cite[Conjecture 5.11]{gammaPositivity} that  $h^\ast$ polynomials of classical symmetric edge polytopes are $\gamma$-positive. It was shown in \cite[Section 4]{DHO2024} that this is false if one goes to the more general setting of generalized symmetric edge polytopes. The counterexample provided therein comes from the dual matroid of the graphic matroid of a non-planar graph. In particular, the underlying matroid is not graphic. As its circuits are of size $3$, one can conclude that it is not possible to realize this  matroid  by the  boundary map of some simplicial complex. Extensive computations yield the following question:

\begin{question}
    Is the $h^\ast$-polynomial of any reflexive symmetric (co)homology polytope $\gamma$-positive? 
\end{question}
In order to attack this question, the first step is to better understand the $h^\ast$-vector of these polytopes. Possible approaches might go via  a Gr\"obner basis (see \Cref{t:groebner_homology}) or the attempt to generalize the description of the $h^\ast$-vectors for symmetric edge polytopes of graphs from \cite{KT2023} to the more general setting of simplicial complexes.


\
Another line of further research naturally arises from the following connections: We have seen in \Cref{c:facettrees}, that facets of $\P_\Delta$ correspond to subcomplexes of $\Delta$ that contain a simplicial spanning forest of $\Delta$. In view of the similarities between the tools we use (such as the SNF of the boundary  map) and the tools used in the study of critical groups of simplicial complexes (see e.g., \cite{DKM2009}) one might wonder whether there exists a relationship between critical groups of simplicial complexes and symmetric (co)homology polytopes. In~\cite{DKM2009} the authors show that the order of a given critical group of a simplicial complex $\Delta$ is equal to a  torsion-weighted enumeration of simplicial spanning trees of $\Delta$. 
Since in the case that $\Delta$ admits a simplicial spanning tree, facets of $\P_\Delta$ correspond to subcomplexes containing simplicial  spanning trees, and  higher torsion is an obstruction to reflexivity (see \Cref{p:torsionNOTreflexive}), it is conceivable that there exists a criterion in terms of the size of the (last) critical group of $\Delta$ and the number of facets of $\P_\Delta$ showing that the symmetric homology polytope $\P_\Delta$ is not reflexive.

\begin{Acknowledgments}
We would like to thank Matthias Beck for several discussions concerning the contents of this paper. We also are grateful to Bernd Sturmfels for clearing up some confustion which we had when working on the section on Gr\"obner bases.
This work was supported by the German Research Foundation (DFG) via SPP 2458 \textit{Combinatorial Synergies}, project number JU 3097/6-1.
\end{Acknowledgments}

    \bibliographystyle{plain}
    \bibliography{bibliography.bib}
\end{document}